%
%
%
%
\documentclass[12pt]{amsart}
\usepackage{amssymb}
\usepackage{amsthm}
\usepackage{mathrsfs}
\usepackage{amssymb}
\usepackage{amsfonts}
\usepackage[english]{babel}
\usepackage[T1]{fontenc}
\usepackage[latin1]{inputenc}
\usepackage{fullpage}
\usepackage{amsmath}
\usepackage[all]{xy}
\usepackage{stmaryrd}
\usepackage{amsrefs} 
\usepackage{url}
\usepackage{marginnote}
\usepackage{mathtools}    
\usepackage{color}

\mathtoolsset{showonlyrefs=true}          

\DeclareMathOperator{\Vol}{V}
\newtheorem{theorem}{Theorem}[section]
\newtheorem{lemma}[theorem]{Lemma}
\newtheorem{corollary}[theorem]{Corollary}
\theoremstyle{definition}
\newtheorem{definition}[theorem]{Definition}

\newtheorem{proposition}[theorem]{Proposition}
\newtheorem{conjecture}[theorem]{Conjecture}

\theoremstyle{remark}
\newtheorem{remark}[theorem]{Remark}

\numberwithin{equation}{section}



\def\sideremark#1{\ifvmode\leavevmode\fi\vadjust{\vbox to0pt{\vss
 \hbox to 0pt{\hskip\hsize\hskip1em
 \vbox{\hsize3cm\tiny\raggedright\pretolerance10000
 \noindent #1\hfill}\hss}\vbox to8pt{\vfil}\vss}}}

\begin{document}

\title{Deformations of $Q$-curvature II}

\thanks{Wei Yuan was supported by NSFC (Grant No. 12071489, No. 11521101).}

\author{Yueh-Ju Lin}
\address{(Yueh-Ju Lin) Department of Mathematics, Statistics, and Physics,  Wichita State University, 1845 Fairmount Street, Wichita, KS 67260, USA}
\email{lin@math.wichita.edu}


\author{Wei Yuan}
\address{(Wei Yuan) Department of Mathematics, Sun Yat-sen University, Guangzhou, Guangdong 510275, China}
\email{yuanw9@mail.sysu.edu.cn}




\keywords{$Q$-curvature, volume comparison, Einstein manifolds.}

\begin{abstract}
This is the second article of a sequence of research on deformations of $Q$-curvature. In the previous one, we studied local stability and rigidity phenomena of $Q$-curvature. In this article, we mainly investigate the volume comparison with respect to $Q$-curvature. In particular, we show that volume comparison theorem holds for metrics close to strictly stable positive Einstein metrics. This result shows that $Q$-curvature can still control the volume of manifolds under certain conditions, which provides a fundamental geometric characterization of $Q$-curvature. Applying the same technique, we derive the local rigidity of strictly stable Ricci-flat manifolds with respect to $Q$-curvature, which shows the non-existence of metrics with positive $Q$-curvature near the reference metric.
\end{abstract}
\maketitle




\section{Introduction}
The $Q$-curvature is a $4^{th}$-order scalar type curvature. It has been studied for decades due to its geometric resemblance to Gaussian and scalar curvature as a higher-order curvature quantity.\\

For a closed $4$-dimensional Riemannian manifold $(M^4, g)$, $Q$-curvature is defined to be
\begin{align}\label{Q_4}
Q_g = - \frac{1}{6} \Delta_g R_g - \frac{1}{2} |Ric_g|_{g}^2 + \frac{1}{6} R_g^2.
\end{align}
It satisfies the \emph{Gauss-Bonnet-Chern Formula} 
\begin{align}\label{Gauss_Bonnet_Chern}
\int_{M^4} \left( Q_g + \frac{1}{4} |W_g|^2_g \right) dv_g = 8\pi^2 \chi(M),
\end{align}
where $R_g$, $Ric_g$, and $W_g$ are scalar curvature, Ricci curvature, and Weyl tensor for $(M^4, g)$ respectively. In particular, if $(M^4, g)$ is locally conformally flat,  \emph{i.e.} $W_g = 0$, it reduces to 
\begin{equation}\label{total_Q}
\int_{M^4} Q_g dv_g = 8\pi^2 \chi(M),
\end{equation}
which can be viewed as a generalization of the classic \emph{Gauss-Bonnet Theorem} for closed surfaces.\\

Branson (\cite{Bra85}) extended \eqref{Q_4} and defined the Q-curvature for manifolds with dimension at least three to be
\begin{align}\label{Qgeneral}
Q_{g} = A_n \Delta_{g} R_{g} + B_n |Ric_{g}|_{g}^2 + C_nR_{g}^2,
\end{align}
where $A_n = - \frac{1}{2(n-1)}$ , $B_n = - \frac{2}{(n-2)^2}$ and
$C_n = \frac{n^2(n-4) + 16 (n-1)}{8(n-1)^2(n-2)^2}$. With the aid of the Paneitz operator (\cite{Pan08})
\begin{align}
P_g = \Delta_g^2 - div_g \left[(a_n R_g g + b_n Ric_g) d\right] + \frac{n-4}{2}Q_g,
\end{align}
where $a_n = \frac{(n-2)^2 + 4}{2(n-1)(n-2)}$ and $b_n = - \frac{4}{n-2}$, $Q$-curvature shares a similar conformal transformation law as scalar curvature:
	\begin{align*}
	Q_{\hat g} &= e^{-4u} \left( P_g u + Q_g\right), &\text{for $n=4$ and $\hat g = e^{2u} g$ \   } \\
	Q_{\hat g} &= \frac{2}{n-4} u^{-\frac{n+4}{n-4}} P_g u, &\text{for $n\neq4$ and $\hat g = u^{\frac{4}{n-4}} g$}
	\end{align*}
This suggests that Q-curvature is an $4^{th}$-order analogue of scalar curvature.\\

For a long time, mathematicians are seeking for a better understanding about geometric interpretations of Q-curvature especially in dimensions five and above. In the field of conformal geometry, there has been many excellent works regarding $Q$-curvature (see \cite{HY16} for a great survey). Without the restrictions in conformal classes, there are not many results on $Q$-curvature from the viewpoint of Riemannian geometry so far.\\

The main purpose of the authors' research on $Q$-curvature is to investigate the Riemannian geometric properties of $Q$-curvature. For instances, the authors studied local stability and rigidity phenomena and derived some interesting geometric results about $Q$-curvature (see \cite{LY16, LY17} for more details). These results strongly suggest that $Q$-curvature shares analogous geometric properties as scalar curvature.\\

Volume comparison theorem is a fundamental result in differential geometry. It is important both theoretically and practically in the analysis of geometric problems. The classic volume comparison states that a lower bound for Ricci curvature implies the volume comparison of geodesic balls with those in the model spaces. A natural question is that whether we can replace the assumption on Ricci curvature by a weaker one? As for the scalar curvature, this idea has been proved to be feasible in some special situations (\cite{Yuan20}).\\

Inspired by the second author's work \cite{Yuan20}, we consider the volume comparison for Q-curvature. As the first step, we give the definition of model spaces:

\begin{definition}
A Riemannian manifold $(M^n, \bar{g})$ is \emph{$Q$-critical}, if there is a nontrivial function $f\in C^{\infty}(M)$ and a constant $\kappa\in \mathbb{R}$ such that 
\begin{align}
	\Gamma_{\bar g}^* f = \kappa \bar g,
\end{align}
where $\Gamma_{\bar g}^*: C^\infty (M) \rightarrow S_2 (M)$ is the $L^2$-formal adjoint of $\Gamma_{\bar g}:= DQ_{\bar{g}}$, the linearization of $Q$-curvature at $\bar{g}$. 
\end{definition} 

The concept of $Q$-critical metrics provides a standard model for volume comparison of $Q$-curvature. That is, we only need to consider the volume comparison with respect to $Q$-critical metrics. The reason is that for non-$Q$-critical metrics, one can perturb $Q$-curvature and the volume simultaneously without any constraint:

\begin{theorem}[Case-Lin-Yuan \cite{CLY19}]
Let $(M^n, g)$ be a closed Riemannian manifold. If $(M^n, g)$ is not $Q$-critical, then there are neighborhoods $U$ of Riemannian metrics of $g$ and $V \subset C^{\infty}(M) \oplus \mathbb{R}$ of $(Q_{g}, \Vol_M(g) )$ such that for any $(\psi, \upsilon) \in V$, there is a metric $\hat{g}\in U$ such that $Q_{\hat{g}}= \psi$ and $\Vol_M(\hat{g}) = \upsilon$.
\end{theorem}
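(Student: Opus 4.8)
The plan is to recognize the statement as a local surjectivity (submersion) result for the nonlinear operator
\begin{align}
\Phi : g' \longmapsto \big( Q_{g'},\ \Vol_M(g') \big),
\end{align}
set up as a smooth map of Banach spaces, say from a $C^{k+4,\alpha}$-neighborhood of $g$ in the space of Riemannian metrics to $C^{k,\alpha}(M)\oplus\mathbb{R}$, so that there is no loss of derivatives. The conclusion of the theorem — that $\Phi(U)$ contains a neighborhood $V$ of $\Phi(g)=(Q_g,\Vol_M(g))$ — then follows from the implicit function theorem in its right-inverse form, once one shows that the differential $D\Phi_g$ admits a bounded right inverse.

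The first step is to compute the differential and its formal $L^2$-adjoint. Using $\Gamma_g=DQ_g$ and the first variation of volume, one has
\begin{align}
D\Phi_g(h) = \Big( \Gamma_g h,\ \tfrac{1}{2}\int_M \operatorname{tr}_g h\, dv_g \Big),
\end{align}
and a one-line integration by parts (just the definition of $\Gamma_g^*$, together with $\operatorname{tr}_g h = \langle h, g\rangle_g$) gives
\begin{align}
(D\Phi_g)^*(\varphi, c) = \Gamma_g^* \varphi + \tfrac{c}{2}\, g
\end{align}
for $(\varphi,c)\in C^{\infty}(M)\oplus\mathbb{R}$. Hence $(D\Phi_g)^*(\varphi,c)=0$ forces $\Gamma_g^*\varphi = -\tfrac{c}{2}\, g$: if $\varphi\neq 0$ this is precisely the assertion that $g$ is $Q$-critical with constant $\kappa = -c/2$, contrary to hypothesis; and if $\varphi = 0$ then $c\,g=0$ gives $c=0$. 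Thus the hypothesis that $g$ is \emph{not} $Q$-critical is equivalent to the injectivity of $(D\Phi_g)^*$; in particular $\ker \Gamma_g^* = 0$, which is the case $c=0$.

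The second step converts injectivity of the adjoint into a right inverse of $D\Phi_g$ via the standard machinery for overdetermined-elliptic operators. Inspecting the symbol of $\Gamma_g$, whose leading part is $A_n\,\Delta_g\circ DR_g$ — with symbol $h\mapsto -A_n\,|\xi|^2\big(|\xi|^2\operatorname{tr}_g h - h(\xi,\xi)\big)$, a nonzero linear functional of $h$ for each $\xi\neq 0$ since $A_n\neq 0$ — shows that $\Gamma_g$ is underdetermined elliptic, equivalently $\Gamma_g^*$ is overdetermined elliptic. Consequently the self-adjoint operator $D\Phi_g\circ(D\Phi_g)^*$, whose principal part is the eighth-order elliptic operator $\Gamma_g\Gamma_g^*$ together with a finite-rank coupling coming from the volume slot, is Fredholm of index zero between the appropriate spaces, and it is injective because
\begin{align}
\big\langle D\Phi_g(D\Phi_g)^*(\varphi,c),\ (\varphi,c)\big\rangle_{L^2} = \big\| (D\Phi_g)^*(\varphi,c) \big\|_{L^2}^2
\end{align}
vanishes only for $(\varphi,c)=0$ by the first step. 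An injective Fredholm operator of index zero is invertible, so $R:=(D\Phi_g)^*\circ\big(D\Phi_g(D\Phi_g)^*\big)^{-1}$ is a bounded right inverse of $D\Phi_g$, with the regularity gain of $\Gamma_g^*$.

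With the right inverse in hand, the implicit function theorem applied to $\Phi$ at $g$ yields the neighborhoods $U$ and $V$; if $\psi$ is smooth, a standard bootstrap — after fixing the diffeomorphism gauge so that the single scalar equation $Q_{\hat g}=\psi$ becomes elliptic, or simply by re-running the construction in $C^{k,\alpha}$ for all $k$ and using uniqueness in a fixed slice — produces a smooth metric $\hat g$. I expect the main obstacle to be precisely the functional-analytic step: because $Q$-curvature is a single scalar equation imposed on a symmetric $2$-tensor, $\Phi$ is very far from elliptic, so one is forced to pass through the overdetermined-elliptic adjoint $\Gamma_g^*$, and one must treat carefully the genuinely mixed-order nature of $D\Phi_g(D\Phi_g)^*$ (an eighth-order elliptic operator coupled to a zeroth-order volume constraint) in order to legitimately conclude that it is an isomorphism on the chosen function spaces — and it is here, through the injectivity established in the first step, that the hypothesis "$g$ is not $Q$-critical" does its work.
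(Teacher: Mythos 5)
The paper only quotes this theorem from \cite{CLY19} without proof, so there is no in-paper argument to compare against; your proposal is the standard Corvino--Fischer--Marsden-type argument (compute $(D\Phi_g)^*(\varphi,c)=\Gamma_g^*\varphi+\tfrac{c}{2}g$, observe that its injectivity is exactly the negation of $Q$-criticality, use overdetermined ellipticity of $\Gamma_g^*$ to make $D\Phi_g(D\Phi_g)^*$ an injective index-zero Fredholm operator, and conclude by the implicit function theorem), which is precisely how \cite{CLY19} and \cite{CEM13} establish such results, and it is correct. The one imprecision is your closing remark about gauge-fixing: smoothness of $\hat g$ comes from the ansatz $\hat g = g + (D\Phi_g)^*(u)$, which turns the underdetermined scalar equation into a determined elliptic (eighth-order) equation for $u$ amenable to elliptic bootstrap, not from fixing a diffeomorphism gauge.
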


\begin{remark}
	 Corvino, Eichmair, and Miao first showed such type of theorem holds for scalar curvature (\cite{CEM13}). For a more general version regarding \emph{conformally variational invariants}, please refer to \cite{CLY19}.\\
\end{remark}

As a special case of $Q$-critical metrics, $J$-Einstein metrics are the most basic examples (\cite{LY17}): 
\begin{definition}\label{def:J_tensor}
Let $(M^n, g)$ be a Riemannian manifold $(n\geq 3)$. We define a symmetric $2$-tensor associated to $Q$-curvature called \emph{$J$-tensor} to be
$$J_{g}:=-\frac{1}{2}\Gamma_{g}^* (1).$$
A metric is called \emph{$J$-Einstein}, if $J_{g}=\Lambda g$ for some constant $\Lambda\in \mathbb{R}$.
\end{definition} 
In particular, if $\bar g$ is Einstein, one can check that
\begin{align}\label{Jtensor}
	J_{\bar g} = \frac{1}{n} Q_{\bar g} \bar g = \frac{(n-2)(n+2)}{8n^2(n-1)^2} R_{\bar g}^2 \bar g.
\end{align}
Therefore, Einstein metrics are $J$-Einstein and hence $Q$-critical. However, under certain conditions, a $J$-Einstein metric has to be Einstein. We present a characterization of Einstein metrics in terms of the spectrum of the Einstein operator 
\begin{align*}
	\Delta_{E}^{\bar{g}}=\Delta_{\bar g} +2Rm_{\bar g}
\end{align*}
defined on the space of symmetric $2$-tensors. 
\begin{theorem}\label{thm:rigidity_J_Einstein}
	Suppose $(M^n, \bar g)$ is an $n$-dimensional closed $J$-Einstein manifold and the Einstein operator $\Delta_E^{\bar g}$ on $S_2(M)$ satisfies
	\begin{align*}
	\Lambda_E^{\bar g} := \inf_{h \in S_2(M)\setminus \{0\}} \frac{\int_M \langle h, - \Delta_E^{\bar g} h \rangle_{\bar g} dv_{\bar g}}{\int_M |h|_{\bar g}^2 dv_{\bar g}} > - \frac{(n-2)^3(n+2)}{8n(n-1)^2} \min_M R_{\bar g}.
	\end{align*}
	Furthermore, we assume the scalar curvature $R_{\bar g}$ is a constant when $3 \leq n\leq 8$.
	Then $\bar g$ is an Einstein metric.
\end{theorem}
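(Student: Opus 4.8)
The plan is to reduce the $J$-Einstein equation to an elliptic equation for the trace-free Ricci tensor and then close the argument with the spectral gap hypothesis. \textbf{Step 1 (the $J$-tensor).} First I would recall the explicit formula for the adjoint linearization $\Gamma_g^*=(DQ_g)^*$ (as computed in the authors' earlier papers and in \cite{CLY19}) and specialize it to the constant function. Two structural features are decisive. Using the conformal transformation law of $Q$ — equivalently, that $\Gamma_g(\varphi g)=\tfrac12 P_g\varphi-\tfrac{n+4}{4}\varphi Q_g$ — and taking the $L^2$-adjoint gives $\operatorname{tr}_g\Gamma_g^*(1)=\tfrac12 P_g(1)-\tfrac{n+4}{4}Q_g=-2Q_g$, so the $J$-Einstein equation $J_g=\Lambda g$ already forces $Q_{\bar g}\equiv n\Lambda$ to be constant, in accordance with \eqref{Jtensor}. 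Secondly, when $R_g$ is constant the derivative terms in $\Gamma_g^*(1)$ collapse: the contribution of $A_n\Delta_g R_g$ vanishes, the algebraic $Ric\circ Ric$ terms cancel, and the Lichnerowicz Laplacian occurring in $DRic_g^*(Ric_g)$ turns into the Einstein operator via $\Delta_L Ric_g=\Delta_E^g Ric_g-2\,Ric_g\circ Ric_g$, leaving
\begin{align*}
J_g=-\frac{1}{(n-2)^2}\,\Delta_E^{g} Ric_g+C_n R_g\, Ric_g\qquad(R_g\ \mathrm{constant}).
\end{align*}
I would sanity-check this against \eqref{Jtensor} on Einstein metrics, where $\Delta_E^g g=2Ric_g$ forces $\Delta_E^g Ric_g=\tfrac{2R_g^2}{n^2}g$ and one needs the identity $\tfrac{C_n}{n}-\tfrac{2}{n^2(n-2)^2}=\tfrac{(n-2)(n+2)}{8n^2(n-1)^2}$.

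\textbf{Step 2 (constancy of the scalar curvature).} For $3\le n\le 8$ this is the standing hypothesis. For $n\ge 9$ I would try to deduce it: from the diffeomorphism-naturality of $Q$ one has $\operatorname{div}_g\Gamma_g^*(1)=-\tfrac12\,dQ_g$, which together with $\operatorname{div}_g J_g=0$ recovers that $Q_g$ is constant; feeding this back through the full (non-constant-$R$) expression for $\Gamma_g^*(1)$ — now carrying the extra terms in $\nabla^2 R_g$ and $(\Delta_g R_g)g$ — and using the contracted second Bianchi identity should produce a divergence identity for $R_g$ whose integrated form is sign-definite precisely when $n\ge 9$. I expect this to be the main obstacle: the bookkeeping of $\Gamma_g^*(1)$ with non-constant $R_g$ is delicate, and it is the sign of a coefficient polynomial in $n$ that sets the threshold — for $3\le n\le 8$ it goes the wrong way, which is exactly why constancy of $R_{\bar g}$ must be imposed there.

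\textbf{Step 3 (the equation for $\mathring{Ric}$ and the spectral estimate).} With $R_{\bar g}\equiv R_0=\min_M R_{\bar g}$ constant, write $Ric_{\bar g}=\tfrac{R_0}{n}\bar g+\mathring{Ric}_{\bar g}$ for the trace-free Ricci tensor $\mathring{Ric}_{\bar g}$. Taking the trace-free part of $J_{\bar g}=\Lambda\bar g$ and using $(\Delta_E^{\bar g} Ric_{\bar g})^{\circ}=(\Delta_E^{\bar g}\mathring{Ric}_{\bar g})^{\circ}+\tfrac{2R_0}{n}\mathring{Ric}_{\bar g}$ turns the formula of Step 1 into
\begin{align*}
(\Delta_E^{\bar g}\mathring{Ric}_{\bar g})^{\circ}=\mu_n R_0\,\mathring{Ric}_{\bar g},\qquad \mu_n:=(n-2)^2C_n-\frac2n=\frac{(n-2)^3(n+2)}{8n(n-1)^2}.
\end{align*}
Pairing with $\mathring{Ric}_{\bar g}$ and integrating over $M$, the pure-trace correction in $(\Delta_E^{\bar g}\mathring{Ric}_{\bar g})^{\circ}$ drops out against the trace-free tensor, so
\begin{align*}
\mu_n R_0\int_M|\mathring{Ric}_{\bar g}|^2\,dv_{\bar g}=\int_M\langle\Delta_E^{\bar g}\mathring{Ric}_{\bar g},\mathring{Ric}_{\bar g}\rangle_{\bar g}\,dv_{\bar g}\le-\Lambda_E^{\bar g}\int_M|\mathring{Ric}_{\bar g}|^2\,dv_{\bar g},
\end{align*}
whence $\big(\Lambda_E^{\bar g}+\mu_n\min_M R_{\bar g}\big)\int_M|\mathring{Ric}_{\bar g}|^2\,dv_{\bar g}\le 0$. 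Since the hypothesis $\Lambda_E^{\bar g}>-\tfrac{(n-2)^3(n+2)}{8n(n-1)^2}\min_M R_{\bar g}$ is exactly $\Lambda_E^{\bar g}+\mu_n\min_M R_{\bar g}>0$, it forces $\mathring{Ric}_{\bar g}\equiv 0$; as $R_{\bar g}$ is constant, $\bar g$ is Einstein. The one step I am least sure about is Step 2 in low dimensions, and I would expect the bulk of the work (and the genuine dimensional restriction) to sit there rather than in the clean estimate of Step 3.
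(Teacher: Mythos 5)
Your Steps 1 and 3 are essentially the paper's argument: the paper also writes the traceless part of the $J$-Einstein equation as $-\tfrac{1}{n-2}\Delta_E^{\bar g}\mathring S_{\bar g}$ plus lower-order terms, pairs with $\mathring S_{\bar g}$, integrates, and invokes the definition of $\Lambda_E^{\bar g}$ to get $\bigl(\Lambda_E^{\bar g}+\mu_n\min_M R_{\bar g}\bigr)\int_M|\mathring S_{\bar g}|^2\,dv_{\bar g}\le 0$ with exactly your $\mu_n=\tfrac{(n-2)^3(n+2)}{8n(n-1)^2}$; your constant-$R$ formula for $J_g$ is consistent with the paper's expression for $\mathring J_g$ (the quadratic terms you omit are either cancelled between $B_g$ and $T_g$ or are pure trace and die against $\mathring{Ric}_{\bar g}$).

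The genuine gap is Step 2. For $n\ge 9$ you propose to first prove that $R_{\bar g}$ is constant via a separate divergence identity, and you correctly flag this as the weak point: no such argument is given, and I do not see how to extract constancy of $R_{\bar g}$ from the $J$-Einstein equation alone before the spectral hypothesis enters. The paper avoids this entirely. It does \emph{not} assume $R_{\bar g}$ constant in the integral identity: the traceless part of $J_{\bar g}$ carries the extra term $\tfrac{n^2-10n+12}{4(n-1)}\bigl(\nabla^2_{\bar g}(tr_{\bar g}S_{\bar g})-\tfrac1n\bar g\Delta_{\bar g}(tr_{\bar g}S_{\bar g})\bigr)$, which after pairing with $\mathring S_{\bar g}$ and integrating by parts (using the contracted Bianchi identity) contributes $\tfrac{n^2-10n+12}{4n}|d(tr_{\bar g}S_{\bar g})|^2_{\bar g}$. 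Since $n^2-10n+12\ge 0$ precisely for $n\ge 9$, this term is simply discarded as nonnegative in that range, the spectral estimate gives $E_{\bar g}=0$ directly, and Schur's lemma then yields constancy of $R_{\bar g}$ a posteriori; for $3\le n\le 8$ the coefficient has the wrong sign, which is exactly why constancy of $R_{\bar g}$ is hypothesized there. So the fix is not to strengthen Step 2 but to delete it: redo Step 1 without assuming $R_{\bar g}$ constant, keep the Hessian-of-$R$ term in the identity of Step 3, and use the sign of its coefficient.
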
 

\vskip.1in

This result is an important motivation for one to take Einstein metrics to be reference metrics when considering the volume comparison with respect to $Q$-curvature. 
Unfortunately, being an Einstein metric is not sufficient for volume comparison to hold. In fact, one needs to impose a stronger assumption on Einstein metrics.

\begin{definition}[Stability of Einstein manifolds \cite{Bes87, Kro14}]\label{stableEindef}

For $n\geq 3$, suppose $(M^n, \bar g)$ is a closed Einstein manifold.
The Einstein metric $\bar g$ is said to be \emph{strictly stable}, if the \emph{Einstein operator}
	\begin{align*}
	\Delta_{E}^{\bar{g}}=\Delta_{\bar g} +2Rm_{\bar g}
	\end{align*}	
is a negative operator on $S_{2, \bar g}^{_{TT}}(M) \backslash \{0\}$, where
$$S_{2, \bar g}^{_{TT}}(M):=\{h\in S_2(M)\ |\ \delta_{\bar{g}}h=0,\ tr_{\bar{g}} h=0\}$$ is the space of transverse-traceless symmetric 2-tensors on $(M^{n}, \bar{g})$.\\
\end{definition}

Now we state our main result in this article, which concerns a volume comparison with respect to $Q$-curvature for closed strictly stable Einstein manifolds.
\begin{theorem}\label{thmvolQ} 
For $n\geq 3$, suppose $(M^{n},\bar{g})$ is an $n$-dimensional closed strictly stable Einstein manifold with Ricci curvature
$$Ric_{\bar{g}} = (n-1)\lambda\bar{g},$$
where $\lambda >0$ is a constant. 
Then there exists a constant $\varepsilon_0 > 0$ such that for any metric $g$ on $M$ satisfying $$Q_{g}\geq Q_{\bar{g}}$$ and $$||g - \bar{g}||_{C^4(M,\bar{g})}<\varepsilon_0,$$ the following volume comparison holds 
$$\Vol_M(g) \leq \Vol_M(\bar {g}),$$ with the equality holds if and only if $g$ is isometric to $\bar{g}$.
\end{theorem}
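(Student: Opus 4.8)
The plan is to follow the variational strategy that has become standard for this kind of volume comparison (as in Yuan's scalar curvature work), adapting it to the fourth-order operator $\Gamma_{\bar g}$. The starting point is the observation that since $\bar g$ is Einstein with positive Ricci curvature, it is $J$-Einstein by \eqref{Jtensor}, so $\Gamma_{\bar g}^*(1) = -2 J_{\bar g} = -\frac{2}{n} Q_{\bar g} \bar g$ is a (negative) multiple of $\bar g$; in particular $\bar g$ is $Q$-critical, which is what makes a volume-comparison statement possible. I would first establish an integral identity linking $\int_M (Q_g - Q_{\bar g})$ to $\Vol_M(g) - \Vol_M(\bar g)$ up to second order. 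Concretely, writing $g = \bar g + h$ with $\|h\|_{C^4}$ small, Taylor-expand $Q$ and the volume functional: the first-order term in the volume is $\frac{1}{2}\int_M \mathrm{tr}_{\bar g} h \, dv_{\bar g}$, and pairing $\Gamma_{\bar g}^*(1) = \kappa \bar g$ with $h$ converts $\int_M \langle \Gamma_{\bar g}^*(1), h\rangle = \kappa \int_M \mathrm{tr}_{\bar g} h$ into $\int_M (DQ_{\bar g} h) \, dv_{\bar g} = \int_M \langle 1, \Gamma_{\bar g} h\rangle dv_{\bar g}$, so the first-order variations of $\int Q \, dv$ and of $\Vol$ are proportional. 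This is exactly the $Q$-critical condition doing its job.

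The heart of the argument is then a second-order (Morse-theoretic) analysis. After subtracting off the first-order relation, one must show that the quadratic form governing the second variation of the functional $F(g) := \int_M Q_g \, dv_g - \kappa\, \Vol_M(g)$ (or an appropriate modification of it) is negative definite on a complement of the trivial directions (diffeomorphisms and scalings), so that $\bar g$ is a strict local maximum of $\Vol$ subject to the constraint $Q_g \geq Q_{\bar g}$ (equivalently $\int_M(Q_g - Q_{\bar g}) \geq 0$, using the constraint pointwise). The second variation of $\int_M Q_g \, dv_g$ at an Einstein metric, restricted to transverse-traceless $h$, is controlled by the Einstein operator $\Delta_E^{\bar g}$ acting on $h$ — this is where the \emph{strict stability} hypothesis enters decisively, giving a spectral gap $\Lambda_E^{\bar g} < 0$ on $S_{2,\bar g}^{TT}(M)$. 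I would decompose an arbitrary variation via the Ebin–Berger slice / York-type splitting $h = h^{TT} + (\text{Lie derivative term}) + (\text{conformal term})$, handle the pure-trace (conformal) part using the Paneitz operator and the fact that $\lambda > 0$ controls its spectrum, discard the Lie-derivative part by diffeomorphism invariance, and on the $TT$ part invoke strict stability. Combining these pieces should yield a coercive estimate of the form $F(g) \leq -c \|h^{TT}\|^2 - c'\|(\text{conformal part})\|^2 + (\text{higher order})$ near $\bar g$.

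To upgrade the infinitesimal picture to a genuine neighborhood statement, I would use a gauge-fixing/implicit-function-theorem step: given $g$ near $\bar g$, pull back by a diffeomorphism $\varphi$ (close to the identity) so that $\varphi^* g - \bar g$ lies in a suitable slice transverse to the diffeomorphism orbit (e.g. in the kernel of $\delta_{\bar g}$), which is possible by Ebin's slice theorem. Within the slice one further splits off the scaling direction. Then the quadratic estimate above, together with control of the cubic-and-higher remainder terms by $\|h\|_{C^4}$ — here the $C^4$-closeness hypothesis and the fourth-order nature of $Q$ are exactly matched — gives $\Vol_M(g) = \Vol_M(\varphi^* g) \leq \Vol_M(\bar g)$ for $\|g - \bar g\|_{C^4} < \varepsilon_0$, with equality forcing $h^{TT} = 0$ and the conformal part $= 0$ in the slice, hence $\varphi^* g = \bar g$, i.e. $g$ is isometric to $\bar g$. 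The main obstacle I anticipate is the second-variation computation for $\int_M Q_g \, dv_g$: because $Q$ is fourth order and quadratic in curvature, its Hessian at an Einstein metric is a genuinely involved expression, and the delicate point is to reorganize it — after using the Einstein and $J$-Einstein relations, integration by parts, and the second Bianchi identity — into a manifestly sign-definite combination of $\langle h^{TT}, -\Delta_E^{\bar g} h^{TT}\rangle$ and Paneitz-type terms on the conformal factor, so that strict stability and $\lambda>0$ can be applied cleanly; controlling the remainder uniformly and dealing with the possibility that the conformal Laplacian-type operator on functions has a nontrivial kernel (which would require the extra care already visible in the hypotheses of Theorem~\ref{thm:rigidity_J_Einstein}) are the secondary difficulties.
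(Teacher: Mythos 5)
Your overall strategy is the paper's: use $Q$-criticality of the Einstein metric to build a functional with $\bar g$ as a critical point, split the second variation via the Berger--Ebin/York decomposition, kill the Lie-derivative part by the slice theorem, control the TT part by strict stability and the conformal part by Lichnerowicz--Obata applied to the (factored) operator $tr_{\bar g}\Gamma_{\bar g}^*=\frac12(-\Delta_{\bar g}-n\lambda)(-\Delta_{\bar g}+\tfrac{(n-2)(n+2)}{2}\lambda)$, then invoke the Fischer--Marsden Morse lemma for degenerate critical points and finish with the $c\ge 1$ versus $c\le 1$ scaling argument. The worry you raise about a nontrivial kernel of the operator on the conformal factor is real but benign: by Lichnerowicz--Obata the kernel is nontrivial only on the round sphere (first spherical harmonics), and the slice theorem's refined splitting absorbs those directions into the critical submanifold.

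The one concrete defect is your choice of functional $F(g)=\int_M Q_g\,dv_g-\kappa\,\Vol_M(g)$. Forcing $DF_{\bar g}=0$ pins down $\kappa=\tfrac{n-4}{n}Q_{\bar g}$, and then along the homothety curve one computes
\begin{align}
F(c^2\bar g)=Q_{\bar g}\Vol_M(\bar g)\left(c^{n-4}-\tfrac{n-4}{n}c^{n}\right),\qquad \frac{d^2}{dc^2}\Big|_{c=1}F(c^2\bar g)=-4(n-4)\,Q_{\bar g}\Vol_M(\bar g),
\end{align}
which is \emph{strictly positive} for $n=3$; moreover the homothety orbit is not a critical submanifold of $F$ when $n\neq 4$. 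So for $n=3$ (a case the theorem covers) $\bar g$ is not a local maximum of $F$ even modulo diffeomorphisms, the Hessian is not negative semi-definite on the slice, and the degenerate Morse lemma cannot be applied with the scaling orbit as the critical set. You cannot simply ``split off'' a direction in which the functional genuinely increases. The paper avoids this by working with the scale-invariant functional $\mathcal{F}_{M,\bar g}(g)=\Vol_M(g)^{4/n}\int_M Q_g\,dv_{\bar g}$ --- note the frozen volume form $dv_{\bar g}$, which both makes the homothety orbit an exact critical submanifold and drastically simplifies the second variation (computed there through the first variation of the $J$-tensor rather than a direct Hessian of $\int Q_g\,dv_g$). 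Your plan goes through once you replace $F$ by such a scale-invariant (or volume-normalized) functional; as written, the $n=3$ case fails at the second-variation step.
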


\begin{remark}\label{rmk:no_vol_comp_ricci_flat}
	The above volume comparison does not hold for Ricci flat metrics. This is easy to see by taking $g =c^2\bar{g}$ for some constant $c\neq 0$. Clearly, the $Q$-curvature is $Q_{g} = Q_{\bar g} = 0$, but the volume $\Vol_M(g)$ can be either larger or smaller than $\Vol_M(\bar{g})$ depending on $c > 1$ or $c < 1$.
\end{remark}

\begin{remark}
The strictly stability condition in Theorem \ref{thmvolQ} is the same as the scalar curvature case (see \cite{Yuan20}).  It is also necessary for $Q$-curvature. See Remark \ref{stab_ness} for more details regarding a counterexample. \\
\end{remark}

In particular, Theorem \ref{thmvolQ} implies the volume comparison for metrics near the spherical metric, since the reference metric is strictly stable. This is a $4^{th}$-order analogue of \emph{Bray's conjecture} for scalar curvature (see \cite{Yuan20} for more details).
\begin{corollary}
	For $n\geq 3$, let $(\mathbb{S}^{n},\bar{g})$ be the canonical sphere with Ricci curvature
	$$Ric_{\bar{g}} = (n-1)\bar{g}.$$ 
	Then there exists a constant $\varepsilon_0 > 0$ such that for any metric $g$ on $\mathbb{S}^n$ satisfying $$Q_{g}\geq \frac{1}{8}n(n-2)(n+2)$$ and $$||g - \bar{g}||_{C^4(M,\bar{g})}<\varepsilon_0,$$ the following volume comparison holds 
	$$\Vol_M(g) \leq V_{\mathbb{S}^n},$$ with the equality holds if and only if $g$ is isometric to $\bar{g}$.
\end{corollary}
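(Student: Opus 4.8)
The plan is to obtain this Corollary as a direct specialization of Theorem \ref{thmvolQ}. First I would record the two standard facts about the round sphere that are needed. The canonical metric $\bar g$ on $\mathbb{S}^n$ with $Ric_{\bar g} = (n-1)\bar g$ is Einstein with $\lambda = 1$ in the normalization of Theorem \ref{thmvolQ}, and it is \emph{strictly stable} in the sense of Definition \ref{stableEindef}: the Einstein operator $\Delta_E^{\bar g} = \Delta_{\bar g} + 2 Rm_{\bar g}$ is negative on $S_{2,\bar g}^{_{TT}}(\mathbb{S}^n) \backslash \{0\}$. This is classical and can be read off from the spectrum of the Lichnerowicz Laplacian on $\mathbb{S}^n$; see \cite{Bes87} (in particular, $\mathbb{S}^n$ carries no nontrivial infinitesimal Einstein deformations).

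Next I would compute $Q_{\bar g}$. Being Einstein with $Ric_{\bar g} = (n-1)\bar g$, the round sphere has constant scalar curvature $R_{\bar g} = n(n-1)$. Substituting this into \eqref{Jtensor}, namely $\tfrac{1}{n} Q_{\bar g}\,\bar g = J_{\bar g} = \tfrac{(n-2)(n+2)}{8n^2(n-1)^2} R_{\bar g}^2\,\bar g$, gives
\[
Q_{\bar g} \;=\; \frac{(n-2)(n+2)}{8n(n-1)^2}\,R_{\bar g}^2 \;=\; \frac{(n-2)(n+2)}{8n(n-1)^2}\,\bigl(n(n-1)\bigr)^2 \;=\; \frac{1}{8}\,n(n-2)(n+2).
\]
Thus the constant appearing in the Corollary is precisely $Q_{\bar g}$, so the hypothesis $Q_g \geq \tfrac{1}{8}n(n-2)(n+2)$ is exactly the hypothesis $Q_g \geq Q_{\bar g}$ of Theorem \ref{thmvolQ}.

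Finally, since $(\mathbb{S}^n,\bar g)$ is a closed strictly stable positive Einstein manifold, Theorem \ref{thmvolQ} supplies a constant $\varepsilon_0 > 0$ such that every metric $g$ on $\mathbb{S}^n$ with $Q_g \geq Q_{\bar g}$ and $\|g - \bar g\|_{C^4(\mathbb{S}^n,\bar g)} < \varepsilon_0$ satisfies $\Vol_M(g) \leq \Vol_M(\bar g) = V_{\mathbb{S}^n}$, with equality if and only if $g$ is isometric to $\bar g$ — which is exactly the assertion of the Corollary. There is no genuine obstacle at this stage: the entire weight of the argument rests on Theorem \ref{thmvolQ}, and the only ingredient that is invoked rather than computed is the (well-known) strict stability of the round metric.
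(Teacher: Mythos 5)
Your proposal is correct and follows exactly the paper's route: the paper gives no separate proof beyond the remark that Theorem \ref{thmvolQ} applies because the round metric is strictly stable, and your computation of $Q_{\bar g}=\tfrac{1}{8}n(n-2)(n+2)$ from \eqref{Jtensor} with $R_{\bar g}=n(n-1)$ is the right (and correct) verification that the curvature hypothesis matches. Nothing is missing.
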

\ \\

	According to Remark \ref{rmk:no_vol_comp_ricci_flat}, the volume comparison for Ricci-flat manifolds can not be expected. However, applying the same idea as proof of Theorem \ref{thmvolQ}, we can show that strictly stable Ricci-flat manifolds admits local rigidity with respect to $Q$-curvature. This extends our previous local rigidity result for tori and answers the question proposed by the referee of our earlier article \cite{LY16}.
	\begin{theorem}\label{thm:ricci_flat_rigidity}
		Suppose $(M^n ,\bar g)$ is a strictly stable Ricci-flat manifold, then there exists a constant $\varepsilon_0 > 0$ such that any metric $g$ satisfying
		\begin{align*}
		Q_g \geq 0
		\end{align*}
		and
		\begin{align*}
		||g - \bar g||_{C^4(M, \bar g)} < \varepsilon_0
		\end{align*}
implies $g$ has to be Ricci-flat.	In particular, there is no metric with positive $Q$-curvature near $\bar g$.
	\end{theorem}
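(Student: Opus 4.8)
The plan is to run the argument behind Theorem~\ref{thmvolQ} in the Ricci-flat setting; since genuine volume comparison fails here (Remark~\ref{rmk:no_vol_comp_ricci_flat}) the volume functional is dropped and rigidity is extracted directly. The first step is to record the linearization at $\bar g$. Because $R_{\bar g}=0$, $Ric_{\bar g}=0$ and $Q_{\bar g}=0$, one finds $\Gamma_{\bar g}h=A_n\Delta_{\bar g}\bigl(\delta_{\bar g}\delta_{\bar g}h-\Delta_{\bar g}\,tr_{\bar g}h\bigr)$ and $\Gamma_{\bar g}^*f=A_n\bigl(\nabla^2(\Delta_{\bar g}f)-(\Delta_{\bar g}^2f)\,\bar g\bigr)$, so that $\ker\Gamma_{\bar g}^*=\mathbb{R}$; thus $\bar g$ is $Q$-critical with $\kappa=0$ and $J$-Einstein with $\Lambda=0$, and — crucially — the Paneitz operator collapses to $P_{\bar g}=\Delta_{\bar g}^2$, a nonnegative self-adjoint operator whose bottom eigenvalue $0$ is simple, realized by the constants, and separated by a spectral gap from the rest of its spectrum.

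Next I would pass to a normal form. Using the covariance $Q_{\phi^*g}=\phi^*Q_g$ and the conformal transformation law, and invoking the Ebin/Palais slice theorem together with the conformal gauge, any $g$ with $\|g-\bar g\|_{C^4(M,\bar g)}$ small can be written $g=\phi^*\!\bigl(u^{4/(n-4)}(\bar g+k)\bigr)$ (and $g=\phi^*\!\bigl(e^{2u}(\bar g+k)\bigr)$ for $n=4$) with $\phi$ near $\mathrm{id}$, $u>0$ near a constant, and $k\in S_{2, \bar{g}}^{_{TT}}(M)$ near $0$, the norms of $k$ and of $u-\mathrm{const}$ being controlled by $\|g-\bar g\|_{C^4(M,\bar g)}$. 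Under this reduction, $Q_g\ge0$ becomes $\mathrm{sign}(n-4)\,P_{\bar g+k}u\ge0$ for $n\ne4$, and $P_{\bar g+k}u+Q_{\bar g+k}\ge0$ for $n=4$. Since $k$ is transverse-traceless the linear term of $Q$ along $\bar g+k$ vanishes, so $Q_{\bar g+k}=O(\|k\|^2)$; and upon integration the second variation of the total $Q$-curvature at the Ricci-flat, $J$-Einstein metric $\bar g$ collapses — the $C_nR^2$ term contributes nothing pointwise, the $A_n\Delta R$ term integrates to zero, and the $B_n|Ric|^2$ term gives $2B_n\int|DRic_{\bar g}k|_{\bar g}^2$ with $DRic_{\bar g}(k)=-\tfrac12\Delta_E^{\bar g}k$ — yielding
\[
\int_M Q_{\bar g+k}\,dv_{\bar g+k}=-\frac{1}{2(n-2)^2}\int_M\bigl|\Delta_E^{\bar g}k\bigr|_{\bar g}^2\,dv_{\bar g}+\mathrm{Err},\qquad |\mathrm{Err}|\le C\,\|k\|_{C^4(M,\bar g)}\int_M\bigl|\Delta_E^{\bar g}k\bigr|_{\bar g}^2\,dv_{\bar g}.
\]

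The heart of the proof is to test the differential inequality against the ground state of the Paneitz operator. For $n\ne4$, let $\psi_k>0$ be the lowest eigenfunction of the self-adjoint operator $P_{\bar g+k}$, with eigenvalue $\mu_0(k)$ (it is close to a constant, by the spectral gap for $P_{\bar g}=\Delta_{\bar g}^2$); pairing the inequality with $\psi_k$ and using self-adjointness forces $\mathrm{sign}(n-4)\,\mu_0(k)\ge0$, since $\int_M\psi_k u\,dv_{\bar g+k}>0$. On the other hand, first-order perturbation theory, using $P_g(1)=\tfrac{n-4}{2}Q_g$ (so the first-order term equals $\tfrac{n-4}{2\,\Vol_M(\bar g)}\int_M Q_{\bar g+k}\,dv_{\bar g+k}$, and the eigenfunction correction $\psi_k-\psi_0=O(\|k\|^2)$ because $Q_{\bar g+k}=O(\|k\|^2)$), combined with the display gives $\mathrm{sign}(n-4)\,\mu_0(k)=-c_n\!\int_M|\Delta_E^{\bar g}k|_{\bar g}^2\,dv_{\bar g}+\mathrm{Err}'$ with $c_n>0$ and $|\mathrm{Err}'|\le C'\,\|k\|_{C^4}\!\int_M|\Delta_E^{\bar g}k|^2$. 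By strict stability $\Delta_E^{\bar g}k\ne0$ for $k\ne0$, and the elliptic/Gårding estimate together with strict stability give $\int_M|\Delta_E^{\bar g}k|_{\bar g}^2\,dv_{\bar g}\ge c\,\|k\|^2_{H^2(M,\bar g)}$, so the two relations for $\mu_0(k)$ are incompatible once $\|g-\bar g\|_{C^4}$ is small, unless $k=0$. (For $n=4$ one argues identically with $\psi_k\equiv1$, the exact kernel of $P_{\bar g+k}$ in dimension four, obtaining $\int_M Q_{\bar g+k}\,dv_{\bar g+k}\ge0$ against the display.) Once $k=0$, the hypothesis reads $\mathrm{sign}(n-4)\,\Delta_{\bar g}^2u\ge0$ (resp.\ $\Delta_{\bar g}^2u\ge0$); since $\int_M\Delta_{\bar g}^2u\,dv_{\bar g}=0$ this forces $\Delta_{\bar g}^2u\equiv0$, hence $\Delta_{\bar g}u\equiv0$, hence $u\equiv\mathrm{const}$. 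Therefore $g=\phi^*(c\,\bar g)$ for some $c>0$, which is Ricci-flat; and since any Ricci-flat metric has $Q\equiv0$, there is no metric with positive $Q$-curvature in this neighborhood.

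The step I expect to be the main obstacle is the quantitative bookkeeping in the two displays. One must set up the conformal slice rigorously — an implicit-function argument anchored at $\bar g$, where the relevant linearized operator $\Delta_{\bar g}^2$ is degenerate, so everything is carried out on the $L^2(\bar g)$-complement of the constants and in the appropriate $C^4$ (and $H^s$) function spaces — and, more delicately, control the remainders $\mathrm{Err}$ and $\mathrm{Err}'$ so that they are bounded by $\|k\|_{C^4}$ \emph{times} $\int_M|\Delta_E^{\bar g}k|^2$ rather than merely by $\|k\|_{C^4}^3$; this is exactly what lets them be absorbed against the strict-stability gap. The perturbation estimate is favorable here precisely because $Q_{\bar g+k}=O(\|k\|^2)$ for transverse-traceless $k$, so the eigenfunction correction does not pollute the quadratic coefficient of $\mu_0(k)$; and the algebraic facts that make $\mu_0(k)$ come out against $\mathrm{sign}(n-4)$ are that $\bar g$ is $J$-Einstein (vanishing linear term), that the surviving quadratic term is the manifestly signed $-\tfrac{1}{2(n-2)^2}\int|\Delta_E^{\bar g}k|^2$, and that $P_{\bar g}=\Delta_{\bar g}^2$.
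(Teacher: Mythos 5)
Your proposal takes a genuinely different route from the paper. The paper's proof of Theorem \ref{thm:ricci_flat_rigidity} is a short corollary of the machinery already built for Theorem \ref{thmvolQ}: gauge-fix with the slice theorem (Theorem \ref{thm:slice_thm}), note that $Q_g\geq 0$ forces $\mathcal{F}_{M,\bar g}|_{\mathcal{S}_{\bar g}}(\varphi^*g)\geq 0=\mathcal{F}_{M,\bar g}|_{\mathcal{S}_{\bar g}}(\bar g)$, and invoke Proposition \ref{prop:rigidity_on_slice}, whose proof rests on the Fischer--Marsden Morse lemma (Lemma \ref{morselemma}) together with the strict negativity of $D^2\mathcal{F}_{M,\bar g}$ transverse to the homothety direction (Corollary \ref{cor:secvarEin}, Corollary \ref{cor:stable_Einstein_DJ_positive}, and Proposition \ref{prop:positive_L} with $\lambda=0$). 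You instead pass to a multiplicative conformal gauge, turn $Q_g\geq 0$ into a differential inequality for the Paneitz operator of $\bar g+k$, and test against its ground state. Your central quadratic computation is correct and agrees with the paper's: along $S_{2,\bar g}^{_{TT}}(M)$ at a Ricci-flat metric the second variation of the total $Q$-curvature is $-\tfrac{1}{2(n-2)^2}\int_M|\Delta_E^{\bar g}k|^2$, which strict stability makes coercive. For $n\geq 5$ your scheme can be closed cleanly without any eigenvalue perturbation theory (the Rayleigh quotient with test function $1$ gives $\mu_0(k)\leq\tfrac{n-4}{2\Vol_M(g_k)}\int_M Q_{g_k}\,dv_{g_k}<0$ for $k\neq 0$, against $\mu_0(k)\geq 0$ from pairing with the positive ground state), and for $n=4$ integrating the transformation law gives $\int_M Q_{\bar g+k}\,dv_{\bar g+k}\geq 0$ directly; moreover your bound on $\mathrm{Err}$ is genuinely attainable, since $\int_M\Delta_gR_g\,dv_g=0$ leaves only $|Ric|^2$ and $R^2$ in the total $Q$-curvature, so the cubic remainder carries at most two derivatives per factor and is $O(\|k\|_{C^2}\|k\|_{H^2}^2)$.

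The genuine gap is the case $n=3$, together with the bound on $\mathrm{Err}'$ that it forces. For $n<4$ the sign of $n-4$ flips: the pairing argument only yields $\mu_0(k)\leq 0$, and the Rayleigh quotient is an upper bound pointing the wrong way, so you must produce a \emph{lower} bound $\mu_0(k)\geq c_n\int_M|\Delta_E^{\bar g}k|^2-\mathrm{Err}'$ via second-order perturbation theory. The second-order eigenvalue correction is negative and of size comparable to $\|Q_{\bar g+k}\|_{L^2}^2$ divided by the spectral gap of $\Delta_{\bar g}^2$; unlike the integrated quantity, the pointwise $Q_{\bar g+k}$ retains the $\Delta R$ term and hence four derivatives of $k$ (terms of type $k*\nabla^4k$), so the claimed estimate $|\mathrm{Err}'|\leq C\|k\|_{C^4}\int_M|\Delta_E^{\bar g}k|^2$ is asserted rather than proved, and it is exactly the step on which the $n=3$ case stands or falls (it may be rescuable there via $H^2\hookrightarrow C^0$ in dimension three, but that is not in your write-up). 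A secondary, more routine gap is the nonlinear conformal slice $g=\phi^*\bigl(u^{4/(n-4)}(\bar g+k)\bigr)$: its existence with the stated norm control needs an implicit-function argument that you only sketch. Both issues are precisely what the paper's route avoids: the Fischer--Marsden Morse lemma converts the negative definiteness of the second variation on the orthogonal complement of $\mathbb{R}\bar g$ into the rigidity statement in one stroke, uniformly in $n\geq 3$, without any Taylor-expansion-plus-absorption. Feeding your (correct) coercivity computation into that lemma on the paper's slice would give the theorem in all dimensions at once.
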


	\begin{remark}
		It is not difficult to improve this local rigidity result by a weaker assumption that the Ricci-flat metric $\bar g$ is only stable instead. It would be interesting to ask whether we can find an example of unstable Ricci-flat manifold which admits a metric of positive Q-curvature.
	\end{remark}
\ \\

This article is organized as follows: In Section \ref{notation}, we introduce the notation and useful formulas needed throughout the article. In Section \ref{varJ}, we show the rigidity of Einstein metrics in the category of $J$-Einstein metrics and calculate the first variation of $J$-tensor at Einstein metrics which will be used in Section \ref{volcom}. In Section \ref{varfunc}, we calculate the variational formulas for the main functional. In Section \ref{volcom}, we prove our main results (Theorem \ref{thmvolQ}, \ref{thm:ricci_flat_rigidity}) by showing nonpositivity of second variation of the functional and using Morse lemma argument.
In Section \ref{example}, we provide a counterexample showing that the strictly stability of Einstein metric is necessary for our main result. We also make some observations about a global volume comparison of $Q$-curvature for a locally conformally flat $4$-manifold. \\

\paragraph{\textbf{Acknowledgement}}
The authors would like to express their appreciations to Professors Jeffrey S. Case and Yen-Chang Huang for many inspiring discussions. We would like to thank Professor Yoshihiko Matsumoto for introducing his remarkable work \cite{Mat13} and Professor Mijia Lai for a valuable comment on Corollary \ref{cor:global_vol_comp_4-sphere_hyperbolic}. Yueh-Ju Lin would also like to thank Princeton University for the support, as part of the work was done when she was in Princeton.\\


\section{Preliminary and notation}\label{notation}

\subsection{Notations}
Throughout this article, we will always assume $(M^n, g)$ to be an $n$-dimensional closed Riemannian manifold ($n \geq 3$) unless otherwise stated. Also, we list notations involved in this article:

$\mathcal{M}$ - the set of all smooth metrics on $M$;

$\mathscr{D}(M)$ - the set of all smooth diffeomorphisms $ \varphi : M \rightarrow M$;

$\mathscr{X}(M)$ - the set of all smooth vector fields on $M$;

$S_2(M)$ - the set of all smooth symmetric 2-tensors on $M$;

$\Vol_M(g)$ - the volume of manifold $M$ with respect to the metric $g$.\\

We adopt the following convention for Ricci curvature tensor
\begin{align*}
R_{jk} = R^i_{ijk} = g^{il} R_{ijkl}.
\end{align*}
and denote its traceless part as 
\begin{align*}
	E_g : = Ric_g - \frac{1}{n} R_g g.
\end{align*}
The Schouten tensor is defined to be
\begin{align*}
	S_{g}=\frac{1}{n-2}\left(Ric_{g}-\frac{1}{2(n-1)}R_{g}g \right)
\end{align*}
and $\mathring S_g$ is denoted to be its traceless part.\\

For Laplacian operator, we use the convention as follows
\begin{align*}
\Delta_g := g^{ij} \nabla_i \nabla_j.
\end{align*}

\vskip.1in

For simplicity, we introduce following operations:
\begin{align*}
(h \times k )_{ij} := g^{kl}h_{ik}k_{jl} = h_i^lk_{lj}, \ \ \ \ \ \ \ 
h \cdot k  := tr_g (h \times k) = g^{ij}g^{kl}h_{ik}k_{jl} = h^{jk}k_{jk}
\end{align*}
and 
\begin{align*}
(Rm \cdot h )_{jk}:= R_{ijkl} h^{il}
\end{align*}
for any $h, k \in S_2(M)$.\\

Let $X \in \mathscr{X}(M)$ and $h \in S_2(M)$, we use following notations for the operator
\begin{align*}
(\delta_g h)_i := - (div_g h)_i = -\nabla^j h_{ij},
\end{align*}
which is the $L^2$-formal adjoint of Lie derivative (up to a scalar multiple) $$\frac{1}{2}(L_g X)_{ij} = \frac{1}{2} ( \nabla_i X_j + \nabla_j X_i).$$
The Einstein operator acting on $h\in S_2(M)$ is defined to be
\begin{align*}
\Delta_E^{g} h = \Delta_g h + 2 Rm_g\cdot
h .
\end{align*}

The $J$-tensor (\cite{LY17}) is defined to be 
\begin{align}
J_g = \frac{1}{n} Q_g g - \frac{1}{n-2} B_g - \frac{n-4}{4(n-1)(n-2)} T_g,
\end{align}
where
\begin{align*}
B_g 
=& \Delta_E^g \mathring S_g  - \nabla^2_g (tr_{g}S_{g}) + \frac{1}{n} g \Delta_g (tr_{g}S_{g}) - (n -4) \mathring S_g^2  - |\mathring S_g|_g^2 g  - \frac{2(n-2)}{n} (tr_{g}S_{g}) \mathring S_g
\end{align*}
is the \emph{Bach tensor} and
\begin{align*}
\hspace{1.8em}T_g :=& (n-2) \left( \nabla^2_g (tr_g S_g) - \frac{1}{n} g\Delta_g (tr_g S_g) \right) + 4(n-1) \left( \mathring S_g^2 - \frac{1}{n} |\mathring S_g|_g^2 g\right) \\
&- \frac{(n-2)(n^2 + 2n -4)}{n}  (tr_g S_g) \mathring{S}_g.
\end{align*}

\subsection{Basic variational formulae}\label{sec:basic_var_form}
\vskip.1in
We list several formulas for linearization of geometric quantities that will be useful for later sections (see \cite{FM75, LY16, Yuan20} for detailed calculations). \\

The linearization of Ricci tensor is
\begin{align*}
(DRic_g) \cdot h =  - \frac{1}{2}\left[ \Delta_E^{g} h - (Ric_g \times h + h \times Ric_g)+ 
\nabla^2_{g} (tr_g h) + (\nabla_j (\delta_g h)_k + \nabla_k (\delta_g
h)_j)dx^{j}\otimes dx^{k}  \right],
\end{align*}
and the linearization of scalar curvature is
\begin{align*}
(DR_g) \cdot h =  - \Delta_g (tr_g h) +  \delta^2_g h - Ric_g \cdot h.
\end{align*}
The linearization of $Q$-curvature is
\begin{align*}
\Gamma_g h :=& (DQ_g) \cdot h  \\
=& A_n \left[ - \Delta^2_g (tr_g h) +  \Delta_g \delta_g^2 h  -
\Delta_g ( Ric_g \cdot h ) + \frac{1}{2} dR_g \cdot (d( tr_g h ) + 2\delta_g h) - \nabla_g^2 R_g\cdot h\right]\\
 & - B_n \left[  Ric_g \cdot \Delta_E^g h + Ric_g \cdot
\nabla^2_g(tr_g h) + 2 Ric_g \cdot\nabla (\delta_g h) 
\right] \\
&+ 2 C_n R_g \left[ - \Delta_g (tr_g h) +  \delta_g^2 h - Ric_g \cdot h
\right].
\end{align*}
and the $L^2$-formal adjoint of $\Gamma_g$ is 
\begin{align*}
	\Gamma_g^* f :=& A_n \left[ \nabla^2_g
	\Delta_g f - g \Delta^2_g f  - Ric_g \Delta_g f + \frac{1}{2} g \delta_g (f dR_g) + \nabla_g ( f
	dR_g) - f \nabla^2_g R_g \right]\\ \notag
	& - B_n \left[ \Delta_g (f Ric_g) + 2 f
	(Rm_g\cdot Ric_g) + g \delta^2_g (f Ric_g) + 2 \nabla_g \delta_g (f
	Ric_g) \right]\\ \notag
	&- 2 C_n \left[ g\Delta_g (f R_g) - \nabla^2_g (f R_g) + f R_g
	Ric_g \right],
	\end{align*}
where $A_n, B_n , C_n$ are defined in \eqref{Qgeneral}.
The first and second variations of the volume functional are 
\begin{align*}
(D\Vol_{M,g}) \cdot  h = \frac{1}{2} \int_{M} (tr_{g}h) dv_{g}
\end{align*}
and 
\begin{align*}
(D^2\Vol_{M,g}) \cdot  (h, h)= \frac{1}{4} \int_{M} [(tr_{g}h)^2- 2|h|^2_{g}] dv_{g}.
\end{align*}
\ \\


\section{$J$-tensor and Einstein metrics}\label{varJ}

In this section, we will discuss some involved topics about $J$-tensor and Einstein metrics.

\subsection{Rigidity of Einstein metrics in the category of $J$-Einstein metrics}
\ \\

As we have stated in the introduction, Einstein metrics can be identified with a characterization in the spectrum of Einstein operator in the category of $J$-Einstein metrics. Now we present a simple proof here.
 
\begin{proof}[Proof of Theorem \ref{thm:rigidity_J_Einstein}]
	By definition, the $J$-Einstein metric $\bar g$ satisfies the equation
	\begin{align*}
	\mathring J_{\bar g} =& - \frac{1}{n-2}\left(B_{\bar g} + \frac{n-4}{4(n-1)} T_{\bar g} \right)\\
	=&-\frac{1}{n-2}\left[ \Delta_E^{\bar g} \mathring S_{\bar g} + \frac{n^2-10n+12}{4(n-1)}\left( \nabla^2_{\bar g} (tr_{\bar g} S_{\bar g}) - \frac{1}{n} \bar g \Delta_{\bar g}(tr_{\bar g}S_{\bar g}) \right)  \right] + \frac{2}{n}|\mathring S_{\bar g}|^2_{\bar g} \bar g\\
	&+ \frac{(n-2)^2(n+2)}{4n(n-1)}(tr_{\bar g}S_{\bar g})\mathring S_{\bar g} \\
	=& 0.
	\end{align*}
	It implies
	
	\begin{align*}
	0 =& \int_M \langle \mathring J_{\bar g}, \mathring S_{\bar g} \rangle_{\bar g} dv_{\bar g} \\
	=& \frac{1}{n-2}\int_M \left[ - \left\langle \mathring S_{\bar g}, \Delta^{\bar g}_E \mathring S_{\bar g}  \right\rangle_{\bar g} + \frac{n^2-10n+12}{4n}  |d (tr_{\bar g} S_{\bar g})|^2_{\bar g} + \frac{(n-2)^3(n+2)}{8n(n-1)^2} R_{\bar g} |\mathring{S}_{\bar g}|^2_{\bar g} \right] dv_{\bar g}\\
	\geq& \frac{1}{n-2} \left( \Lambda_E^{\bar g} + \frac{(n-2)^3(n+2)}{8n(n-1)^2}\min_M R_{\bar g}\right) \int_M |\mathring{S}_{\bar g}|^2_{\bar g} dv_{\bar g}\\
	\geq& 0
	\end{align*}
	due to assumptions on the spectrum of $(-\Delta_E^{\bar g})$ and 
	\begin{align*}
		tr_{\bar g} S_{\bar g} = \frac{ R_{\bar g}}{2(n-1)}
	\end{align*}
	is a constant for $3\leq n \leq 8$. Therefore, we conclude that $$E_{\bar g} = (n-2) \mathring S_{\bar g} = 0,$$
	which shows $\bar g$ is an Einstein metric. 
\end{proof}

\vskip .2in

In particular, we have

\begin{corollary}
		Suppose $(M^n, \bar g)$ is a closed $J$-Einstein manifold with non-negative constant scalar curvature and the Einstein operator $\Delta_E^{\bar g}$ on $S_2(M)$ is negative, then $\bar g$ is a strictly stable Einstein metric.
\end{corollary}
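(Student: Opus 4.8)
The plan is to deduce this directly from Theorem~\ref{thm:rigidity_J_Einstein} together with Definition~\ref{stableEindef}. First I would observe that the hypothesis ``$\Delta_E^{\bar g}$ is a negative operator on $S_2(M)$'' is a \emph{stronger} condition than the eigenvalue bound appearing in Theorem~\ref{thm:rigidity_J_Einstein}: negativity on all of $S_2(M)\setminus\{0\}$ means $\Lambda_E^{\bar g} \ge 0$ (indeed $>0$ on a closed manifold, since the spectrum is discrete), while the scalar curvature $R_{\bar g}$ is assumed to be a \emph{non-negative constant}, so $-\tfrac{(n-2)^3(n+2)}{8n(n-1)^2}\min_M R_{\bar g} \le 0 \le \Lambda_E^{\bar g}$. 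Hence the spectral hypothesis of Theorem~\ref{thm:rigidity_J_Einstein} is satisfied; moreover the constancy-of-scalar-curvature hypothesis required there for $3\le n\le 8$ is given to us outright. Applying Theorem~\ref{thm:rigidity_J_Einstein}, we conclude that $\bar g$ is an Einstein metric.

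Next I would promote ``Einstein'' to ``strictly stable Einstein''. By Definition~\ref{stableEindef}, strict stability means precisely that $\Delta_E^{\bar g}$ is negative on the subspace $S_{2,\bar g}^{TT}(M)\setminus\{0\}$ of transverse-traceless symmetric $2$-tensors. But we are assuming $\Delta_E^{\bar g}$ is negative on the \emph{entire} space $S_2(M)\setminus\{0\}$, and $S_{2,\bar g}^{TT}(M)$ is a linear subspace of $S_2(M)$; restricting a negative operator to a subspace keeps it negative. Therefore $\bar g$ is strictly stable, which completes the proof.

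There is essentially no analytic obstacle here: the content is entirely in recognizing that the Corollary's hypotheses are a clean special case of the theorem already proved, plus the trivial monotonicity of negativity under passing to subspaces. The only point requiring a word of care is the strictness of the inequality $\Lambda_E^{\bar g} \ge 0$ versus the strict inequality in Theorem~\ref{thm:rigidity_J_Einstein}: when $\min_M R_{\bar g} > 0$ the theorem's inequality is strict automatically from $\Lambda_E^{\bar g}\ge 0$; when $R_{\bar g}\equiv 0$ one uses that negativity on the closed manifold forces $\Lambda_E^{\bar g}>0$ (the infimum is attained by an eigentensor), so the strict inequality $\Lambda_E^{\bar g} > 0 = -\tfrac{(n-2)^3(n+2)}{8n(n-1)^2}\min_M R_{\bar g}$ still holds. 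Either way the hypothesis of Theorem~\ref{thm:rigidity_J_Einstein} is met, and the conclusion follows.
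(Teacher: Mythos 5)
Your proposal is correct and follows essentially the same route as the paper: apply Theorem~\ref{thm:rigidity_J_Einstein} to conclude $\bar g$ is Einstein, then observe that negativity of $\Delta_E^{\bar g}$ on all of $S_2(M)$ restricts to negativity on $S_{2,\bar g}^{_{TT}}(M)$, giving strict stability. Your extra care in verifying the strict spectral inequality (splitting the cases $R_{\bar g}>0$ and $R_{\bar g}=0$, and noting that discreteness of the spectrum on a closed manifold forces $\Lambda_E^{\bar g}>0$) is a point the paper passes over as ``straightforward,'' but it is the same argument.
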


\begin{proof}
	It is straightforward that $\bar g$ is an Einstein metric according to Theorem \ref{thm:rigidity_J_Einstein}. Moreover, we obtain
	\begin{align*}
	\inf_{h \in S_{2,\bar g}^{_{TT}}(M)\setminus \{0\}} \frac{\int_M \langle h, - \Delta_E^{\bar g} h \rangle_{\bar g} dv_{\bar g}}{\int_M |h|_{\bar g}^2 dv_{\bar g}} \geq \inf_{h \in S_2(M)\setminus \{0\}} \frac{\int_M \langle h, - \Delta_E^{\bar g} h \rangle_{\bar g} dv_{\bar g}}{\int_M |h|_{\bar g}^2 dv_{\bar g}} > 0,
	\end{align*}
	since $S_{2,\bar g}^{_{TT}} \subsetneqq S_2(M)$. Thus the metric $\bar g$ is strictly stable Einstein by definition.
\end{proof}

\vskip.3in

\subsection{Variations of $J$-tensor at Einstein metrics}
\ \\

As a geometric symmetric $2$-tensor, $J$-tensor has a natural connection with $Q$-curvature as we have discussed in the introduction. It is crucial to investigate variational properties of $J$-tensor, when considering variational problems associated to $Q$-curvature. 
In this section, we will obtain the first variation of the traceless part of $J$-tensor at an Einstein metric along the $TT$-direction, which is critical in our further discussion.\\

For simplicity, we may use $'$ to denote the first variation in the space of metrics. The direction of the variation will be clear from the context.

\begin{lemma}\label{lem:var_E}
	Suppose $\bar g$ is an Einstein metric, then 
	\begin{align*}
	(DE_{\bar g}) \cdot \mathring h = -\frac{1}{2} \Delta_E^{\bar g} \mathring h
	\end{align*}
	and
	\begin{align*}
	(DR_{\bar g}) \cdot \mathring h = 0
	\end{align*}
	for any $\mathring h \in S_{2,\bar g}^{_{TT}} (M)$.
\end{lemma}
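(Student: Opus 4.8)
The plan is to compute the two variations directly from the linearization formulae recorded in Section~\ref{sec:basic_var_form}, specializing to an Einstein background $\bar g$ with $Ric_{\bar g} = (n-1)\lambda \bar g$ (equivalently $R_{\bar g}$ constant), and then to use the $TT$-assumption $\delta_{\bar g}\mathring h = 0$, $tr_{\bar g}\mathring h = 0$ to kill almost every term.

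First I would handle the scalar curvature. From
$(DR_{\bar g})\cdot h = -\Delta_{\bar g}(tr_{\bar g} h) + \delta_{\bar g}^2 h - Ric_{\bar g}\cdot h$,
plugging in $h = \mathring h$ with $tr_{\bar g}\mathring h = 0$ and $\delta_{\bar g}\mathring h = 0$ makes the first two terms vanish, and since $Ric_{\bar g} = (n-1)\lambda \bar g$ we get $Ric_{\bar g}\cdot \mathring h = (n-1)\lambda\, tr_{\bar g}\mathring h = 0$. Hence $(DR_{\bar g})\cdot\mathring h = 0$. Next, for the traceless Ricci tensor write $E_g = Ric_g - \tfrac1n R_g g$, so $(DE_{\bar g})\cdot\mathring h = (DRic_{\bar g})\cdot\mathring h - \tfrac1n\big((DR_{\bar g})\cdot\mathring h\big)\bar g - \tfrac1n R_{\bar g}\,\mathring h$. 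The middle term is zero by the previous step. In the formula for $(DRic_{\bar g})\cdot h$, the terms $\nabla^2_{\bar g}(tr_{\bar g}h)$ and the symmetrized $\nabla(\delta_{\bar g}h)$ term both vanish on $\mathring h$, and $Ric_{\bar g}\times\mathring h + \mathring h\times Ric_{\bar g} = 2(n-1)\lambda\,\mathring h$. Thus
$(DRic_{\bar g})\cdot\mathring h = -\tfrac12\big(\Delta_E^{\bar g}\mathring h - 2(n-1)\lambda\,\mathring h\big)$,
and subtracting $\tfrac1n R_{\bar g}\mathring h = (n-1)\lambda\,\mathring h$ gives $(DE_{\bar g})\cdot\mathring h = -\tfrac12\Delta_E^{\bar g}\mathring h$, as claimed. (One must be slightly careful that the sign/normalization conventions for $\Delta_E$, $\delta_g$, and the Ricci sign in the excerpt are used consistently, but no genuine obstacle arises.)

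Since both computations are elementary substitutions into already-stated identities, there is no serious obstacle; the only thing requiring attention is bookkeeping of signs and the constant $(n-1)\lambda$, together with confirming that the $TT$-conditions are preserved well enough that each suppressed term is genuinely zero pointwise (it is, since $tr_{\bar g}\mathring h$ and $\delta_{\bar g}\mathring h$ vanish identically, not merely in an integrated sense). I would present the scalar curvature identity first because it is needed inside the $E_{\bar g}$ computation, and then conclude.
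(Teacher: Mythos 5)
Your proposal is correct and follows exactly the route the paper takes: the paper's proof is a one-line remark that the lemma is "straightforward from first variations of Ricci and scalar curvature in Section 2.2 together with $E_{\bar g}=0$ and $\mathring h\in S_{2,\bar g}^{_{TT}}(M)$," and your computation simply spells out that substitution, with the cancellation of the $(n-1)\lambda\,\mathring h$ terms carried out correctly.
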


\begin{proof}
	This is straightforward from first variations of Ricci and scalar curvature in Section \ref{sec:basic_var_form} together with facts that $E_{\bar g} = 0$ and $\mathring h  \in S_{2,\bar g}^{_{TT}}(M)$.
\end{proof}

\vskip.1in

For Einstein metrics, the connection Laplacian is commutative with first variation:
\begin{lemma}\label{lem:var_Delta_E}
	Suppose $\bar g$ is an Einstein metric, then
	\begin{align*}
	(\Delta_{\bar g} E)' =& \Delta_{\bar g} E'.
	\end{align*}
\end{lemma}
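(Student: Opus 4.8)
The goal is to show that at an Einstein metric $\bar g$, the connection Laplacian commutes with the first variation operator when applied to the traceless Ricci tensor $E$. Since $E_{\bar g} = 0$ at an Einstein metric, the statement $(\Delta_{\bar g} E)' = \Delta_{\bar g} E'$ is not completely trivial because $\Delta_{\bar g}$ itself depends on the metric, so in principle $(\Delta_g E_g)' = (\Delta_g)' (E_{\bar g}) + \Delta_{\bar g} (E_{\bar g}')$; the plan is precisely to observe that the first term $(\Delta_g)' (E_{\bar g})$ vanishes because it is a linear expression in $E_{\bar g}$ and its covariant derivatives, all of which are zero.

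Here is how I would carry it out. First I would recall the general variation formula for the connection Laplacian acting on a symmetric $2$-tensor $k_g$ along a direction $h$: schematically,
\begin{align*}
(D(\Delta_g k_g)) \cdot h = \Delta_{\bar g}(k_{\bar g}' ) + (D\Delta_g \cdot h)(k_{\bar g}),
\end{align*}
where the operator $D\Delta_g \cdot h$ is a first-order differential operator whose coefficients are built from $h$ and the Christoffel symbol variation $\Gamma' = \frac{1}{2}\bar g^{-1}(\nabla h + \nabla h - \nabla h)$, contracted against $k_{\bar g}$ and $\nabla_{\bar g} k_{\bar g}$. The key point is that every term in $(D\Delta_g \cdot h)(k_{\bar g})$ is linear in $k_{\bar g} = E_{\bar g}$ and $\nabla_{\bar g} E_{\bar g}$ (and $\nabla_{\bar g}^2 E_{\bar g}$). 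Since $\bar g$ is Einstein, $E_{\bar g} \equiv 0$, hence $\nabla_{\bar g} E_{\bar g} = 0$ and $\nabla_{\bar g}^2 E_{\bar g} = 0$ as well. Therefore $(D\Delta_g \cdot h)(E_{\bar g}) = 0$, and what remains is exactly $\Delta_{\bar g} E_{\bar g}'$.

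Concretely, I would write $\Delta_g k = g^{ij}\nabla_i\nabla_j k$ and differentiate, keeping track of three contributions: the variation of $g^{ij}$ (which produces $h^{ij}\nabla_i\nabla_j E_{\bar g}$, zero since $\nabla^2 E_{\bar g} = 0$), the variation of the outer connection $\nabla_i$ (producing terms of the form $\Gamma' * \nabla E_{\bar g}$, zero), and the variation of the inner connection inside $\nabla_j k$ (producing terms $\Gamma' * E_{\bar g}$ and $\nabla\Gamma' * E_{\bar g}$, again zero). Collecting, only the term where both covariant derivatives and the metric contraction are held at $\bar g$ while $k = E$ is differentiated survives, giving $\Delta_{\bar g}(E_{\bar g}')$. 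Then $(\Delta_{\bar g} E)' := (D(\Delta_g E_g))\cdot h = \Delta_{\bar g} E' $, which is the claim.

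I do not anticipate a serious obstacle here: the entire content is the bookkeeping observation that all the "extra" terms coming from varying the Laplace operator are proportional to $E_{\bar g}$ or its covariant derivatives, which vanish identically for an Einstein metric. The only mild care needed is to make sure that when $k_g = E_g = Ric_g - \frac1n R_g g$ is itself differentiated, one correctly gets $E_{\bar g}'$ rather than, say, $Ric_{\bar g}'$; but that is automatic from the definition. If one wanted to be even more economical, one could simply invoke that $\Delta_g$ depends smoothly on $g$, so $(D(\Delta_g E_g))\cdot h - \Delta_{\bar g}(D E_g \cdot h)$ is a smooth bundle map applied to $E_{\bar g} = 0$, hence vanishes.
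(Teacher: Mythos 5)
Your proof is correct and follows essentially the same route as the paper: expand $\Delta_g$ acting on $E_g$, differentiate, and observe that every term produced by varying $g^{ij}$ or the connection is linear in $E_{\bar g}$, $\nabla_{\bar g}E_{\bar g}$, or $\nabla^2_{\bar g}E_{\bar g}$, all of which vanish identically since $\bar g$ is Einstein, leaving only $\Delta_{\bar g}E'$. The paper carries out exactly this bookkeeping with explicit Christoffel symbols; your only minor imprecision is calling the difference of operators a ``bundle map'' (it is a differential operator in $E_{\bar g}$), but since $E_{\bar g}\equiv 0$ on all of $M$ this changes nothing.
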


\begin{proof}
	It is straightforward that	
	\begin{align*}
	&\Delta_{\bar g} E_{jk} \\
	=& \bar g^{il} \nabla_i \nabla_l E_{jk} \\
	=& \bar g^{il} \left[\partial_i (\nabla_l E_{jk}) - \Gamma^p_{il}\nabla_p E_{jk} - \Gamma^p_{ij}\nabla_l E_{pk} - \Gamma^p_{ik}\nabla_l E_{jp} \right] \\
	=& \bar g^{il} \left[\partial_i (\partial_l E_{jk} - \Gamma_{lj}^p E_{pk} - \Gamma_{lk}^p E_{jp}) - \Gamma^p_{il}\nabla_p E_{jk} - \Gamma^p_{ij}\nabla_l E_{pk} - \Gamma^p_{ik}\nabla_l E_{jp} \right] \\
	=& \bar g^{il} \left[\partial_i \partial_l E_{jk} - (\partial_i \Gamma_{lj}^p) E_{pk} - (\partial_i \Gamma_{lk}^p) E_{jp}  -  \Gamma_{lj}^p\partial_i E_{pk} -  \Gamma_{lk}^p\partial_i E_{jp}- \Gamma^p_{il}\nabla_p E_{jk} - \Gamma^p_{ij}\nabla_l E_{pk} - \Gamma^p_{ik}\nabla_l E_{jp} \right].
	\end{align*}
	This shows
	\begin{align*}
	&(\Delta_{\bar g} E_{jk})' \\
	=& \bar g^{il} \left[\partial_i \partial_l E_{jk}' - (\partial_i \Gamma_{lj}^p) E_{pk}' - (\partial_i \Gamma_{lk}^p) E_{jp}'  -  \Gamma_{lj}^p\partial_i E_{pk}' -  \Gamma_{lk}^p\partial_i E_{jp}' - \Gamma^p_{il}\nabla_p E_{jk}' - \Gamma^p_{ij}\nabla_l E_{pk}' - \Gamma^p_{ik}\nabla_l E_{jp}' \right] \\
	=& \Delta_{\bar g} E_{jk}',
	\end{align*}
	when $\bar g$ is Einstein.
\end{proof}

\vskip.1in

Now the first variation of Bach tensor at an Einstein metric $\bar g$ is given as follows:
\begin{proposition}
	Suppose $\bar g$ is an Einstein metric, then for any $\mathring h \in S_{2, \bar g}^{_{TT}}(M)$, 
	\begin{align}\label{eqn:first_var_Bach_TT}
	(DB_{\bar g}) \cdot \mathring h =& -\frac{1}{2(n-2)}\left(-\Delta_E^{\bar g} + \frac{n-2}{n(n-1)} R_{\bar g} \right)(- \Delta_E^{\bar g}  \mathring h).
	\end{align}
\end{proposition}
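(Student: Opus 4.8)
The plan is to compute $(DB_{\bar g})\cdot \mathring h$ directly from the explicit formula
\[
B_g = \Delta_E^g \mathring S_g - \nabla^2_g (tr_g S_g) + \tfrac{1}{n} g \,\Delta_g (tr_g S_g) - (n-4)\mathring S_g^2 - |\mathring S_g|_g^2 g - \tfrac{2(n-2)}{n}(tr_g S_g)\mathring S_g,
\]
linearizing term by term along a $TT$-variation $\mathring h$ at an Einstein metric $\bar g$, and exploiting the vanishing $E_{\bar g}=0$, equivalently $\mathring S_{\bar g}=0$, together with $tr_{\bar g}S_{\bar g}= \tfrac{R_{\bar g}}{2(n-1)} = \mathrm{const}$. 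Because $\mathring S_{\bar g}=0$, every term that is quadratic in $\mathring S_g$ — namely $(n-4)\mathring S_g^2$, $|\mathring S_g|_g^2 g$, and $\tfrac{2(n-2)}{n}(tr_g S_g)\mathring S_g$ — has vanishing first variation. Since $tr_{\bar g}S_{\bar g}$ is constant and $(DR_{\bar g})\cdot\mathring h = 0$ by Lemma \ref{lem:var_E}, the variation of $tr_g S_g$ is $\tfrac{1}{2(n-1)}(DR_{\bar g})\cdot\mathring h = 0$ at $\bar g$, so that $-\nabla^2_g(tr_gS_g) + \tfrac1n g\,\Delta_g(tr_gS_g)$ also contributes nothing: its variation involves either a derivative of $(tr_gS_g)'$, which vanishes, or a Christoffel-symbol term hitting $\nabla(tr_{\bar g}S_{\bar g})=0$. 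Hence only the leading term $\Delta_E^g\mathring S_g$ survives.

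Second, I would handle $\big(\Delta_E^g\mathring S_g\big)'$. Write $\Delta_E^g\mathring S_g = \Delta_g \mathring S_g + 2 Rm_g\cdot \mathring S_g$. Linearizing the connection Laplacian, the usual formula gives $(\Delta_g\mathring S_g)' = \Delta_{\bar g}(\mathring S_g)' + (\text{terms with }\nabla\mathring S_{\bar g}\text{ or }(\Gamma)'\cdot\mathring S_{\bar g})$; since $\mathring S_{\bar g}=0$, all the correction terms vanish and we simply get $\Delta_{\bar g}(\mathring S_g)'$ — this is precisely the content of the computation in Lemma \ref{lem:var_Delta_E} applied to $\mathring S_g = \tfrac{1}{n-2}E_g$. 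Similarly, $(2Rm_g\cdot\mathring S_g)' = 2Rm_{\bar g}\cdot(\mathring S_g)' + 2(Rm_g)'\cdot\mathring S_{\bar g} = 2Rm_{\bar g}\cdot(\mathring S_g)'$, again because $\mathring S_{\bar g}=0$. Therefore $(DB_{\bar g})\cdot\mathring h = \Delta_E^{\bar g}\big((\mathring S_g)'\big) = \tfrac{1}{n-2}\Delta_E^{\bar g}\big((DE_{\bar g})\cdot\mathring h\big)$.

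Third, I would substitute $(DE_{\bar g})\cdot\mathring h = -\tfrac12\Delta_E^{\bar g}\mathring h$ from Lemma \ref{lem:var_E}. Naively this yields $(DB_{\bar g})\cdot\mathring h = -\tfrac{1}{2(n-2)}\Delta_E^{\bar g}\Delta_E^{\bar g}\mathring h$, which is not yet in the asserted form. The discrepancy is the term $\tfrac{n-2}{n(n-1)}R_{\bar g}$ inside the first factor of \eqref{eqn:first_var_Bach_TT}; this must come from the fact that $(DE_{\bar g})\cdot\mathring h$, while $TT$ at the linearized level, is not quite the same object as a generic element of $S_{2,\bar g}^{_{TT}}(M)$ when one tracks the precise constant — more accurately, it arises because $\mathring S_g$ differs from $\tfrac1{n-2}E_g$ by a trace term whose variation is nonzero off-Einstein, or because the operator $\Delta_E$ does not preserve $TT$ exactly and one must account for how the traceless projection interacts with $\Delta_E$. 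The careful bookkeeping here is the main obstacle: I expect the $R_{\bar g}$-shift to emerge from commuting the traceless projection past $\Delta_E^{\bar g}$, using the Einstein condition $Ric_{\bar g} = (n-1)\lambda\bar g$ and the Weitzenböck-type identity relating $\Delta_E$ on traceless tensors to $\Delta_E$ on their full counterparts. I would therefore redo the linearization of $\mathring S_g = S_g - \tfrac{1}{n}(tr_gS_g)g$ being scrupulous about the trace part: $(\mathring S_g)' = (S_g)' - \tfrac1n(tr_gS_g)'g - \tfrac1n(tr_{\bar g}S_{\bar g})\mathring h$, and the last term $-\tfrac{1}{n}\cdot\tfrac{R_{\bar g}}{2(n-1)}\mathring h$ is exactly what, after applying $\Delta_E^{\bar g}$ and combining with the $\Delta_E^{\bar g}(DE_{\bar g})\cdot\mathring h$ piece, produces the claimed factor $\big(-\Delta_E^{\bar g} + \tfrac{n-2}{n(n-1)}R_{\bar g}\big)$. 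Once that constant is pinned down, collecting terms gives \eqref{eqn:first_var_Bach_TT} directly.
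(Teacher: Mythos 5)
Your overall strategy (term-by-term linearization of the explicit formula for $B_g$ at $\bar g$, using $\mathring S_{\bar g}=0$, $(DR_{\bar g})\cdot\mathring h=0$ and the commutation of $\Delta_{\bar g}$ with the first variation) is the same as the paper's, but there is a concrete bookkeeping error that derails the computation. The term $-\frac{2(n-2)}{n}(tr_gS_g)\mathring S_g$ is \emph{linear}, not quadratic, in $\mathring S_g$: its coefficient $tr_{\bar g}S_{\bar g}=\frac{R_{\bar g}}{2(n-1)}$ does not vanish at an Einstein metric, so this term has nonzero first variation
\begin{align}
-\frac{2(n-2)}{n}\,(tr_{\bar g}S_{\bar g})\,(\mathring S_{\bar g})' \;=\; -\frac{R_{\bar g}}{n(n-1)}\,(DE_{\bar g})\cdot\mathring h \;=\; \frac{R_{\bar g}}{2n(n-1)}\,\Delta_E^{\bar g}\mathring h,
\end{align}
which is precisely the $\frac{n-2}{n(n-1)}R_{\bar g}$-shift in \eqref{eqn:first_var_Bach_TT}. (In the paper's proof the same contribution appears as the variation of the linear term $-\frac{R_g}{n(n-1)}E_g$ after rewriting $B_g$ in terms of $E_g$ and $R_g$.) By discarding this term as ``quadratic'' you are left with only $-\frac{1}{2(n-2)}(\Delta_E^{\bar g})^2\mathring h$ and are forced to hunt for the missing piece elsewhere.

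The patch you then propose cannot supply it. Since $S_g=\frac{1}{n-2}\bigl(Ric_g-\frac{1}{2(n-1)}R_g g\bigr)$, one checks that $\mathring S_g=\frac{1}{n-2}E_g$ \emph{identically in $g$}, so $(\mathring S_g)'=\frac{1}{n-2}(DE_{\bar g})\cdot\mathring h=-\frac{1}{2(n-2)}\Delta_E^{\bar g}\mathring h$ with no residual trace correction: the term $-\frac1n(tr_{\bar g}S_{\bar g})\mathring h$ in your expansion of $(\mathring S_g)'$ is already absorbed into this identity, and extracting it a second time would in any case produce $-\frac{R_{\bar g}}{2n(n-1)}\Delta_E^{\bar g}\mathring h$ after applying $\Delta_E^{\bar g}$ --- the wrong sign. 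Likewise there is no hidden contribution from ``commuting the traceless projection past $\Delta_E^{\bar g}$.'' Reinstating the variation of the $(tr_gS_g)\mathring S_g$ term and deleting the speculative final paragraph turns your argument into the paper's proof.
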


\begin{proof}
	Rewriting the Bach tensor in terms of scalar curvature and traceless Ricci tensor, we have
	\begin{align*}
	B_g 
	=&\frac{1}{n-2}\Delta_E^g E_g  - \frac{1}{2(n-1)} \left( \nabla^2_g R_g - \frac{1}{n} g \Delta_g R_g\right) - \frac{n -4}{ (n-2)^2} E_g^2  - \frac{ 1}{(n-2)^2}|E_g|_g^2 g - \frac{R_g}{n(n-1)} E_g.
	\end{align*}
	From Lemma \ref{lem:var_Delta_E} and $E_{\bar g}=0$, 
		\begin{align*}
		B_{\bar g}' 
		=&\frac{1}{n-2}\Delta_E^{\bar g} E_{\bar g}'  - \frac{1}{2(n-1)} \left( \nabla^2_{\bar g} R_{\bar g}' - \frac{1}{n} {\bar g} \Delta_{\bar g} R_{\bar g}'\right)  - \frac{R_{\bar g}}{n(n-1)} E_{\bar g}', 
		\end{align*}	
	where all variations are taken along the $TT$-direction $\mathring h \in S_{2, \bar g}^{_{TT}}(M)$. According to Lemma \ref{lem:var_E}, we have $R_{\bar g}' = 0$ and hence
	\begin{align*}
	(DB_{\bar g}) \cdot \mathring h =& -\frac{1}{2(n-2)}\left(-\Delta_E^{\bar g} + \frac{n-2}{n(n-1)} R_{\bar g} \right)(-\Delta_E^{\bar g}  \mathring h).
	\end{align*}
\end{proof}

\begin{remark}
	Note that when $n=4$ and
	\begin{align*}
	Ric_{\bar g} = 3 \lambda \bar g,
	\end{align*}
	then equation \eqref{eqn:first_var_Bach_TT} becomes
	\begin{align*}
	(DB_{\bar g}) \cdot \mathring h= -\frac{1}{4}\left(-\Delta_E^{\bar g} +2 \lambda \right) (-\Delta_E^{\bar g} \mathring h).
	\end{align*}	
	Recall that Bach tensor appears to be the obstruction tensor in the study of ambient space (\cite{GH05}). In fact, Matsumoto has shown that for $2m^{th}$-order obstruction tensor $\mathcal{O}_{\bar g}^{(2m)}$, its first variation at a $2m$-dimensional Einstein metric with Ricci curvature $$Ric_{\bar g}=(2m-1)\lambda \bar{g}$$ is given by
	\begin{align}
\left(D\mathcal{O}_{\bar g}^{(2m)}\right) \cdot \mathring h=& \frac{(-1)^{m-1}}{4(m-1)} \left[\prod_{k= 0}^{m-1}\left(-\Delta_E^{\bar g} + 2k(2m-2k-1) \lambda \right) \right] \mathring h,
	\end{align}
	for any $\mathring h \in S_{2, \bar g}^{_{TT}}(M)$. Please see \cite{Mat13} for further information.
\end{remark}

\vskip.1in

For the rest part of $\mathring J_{\bar g}$, we have
\begin{lemma}
	Suppose $\bar g$ is an Einstein metric, then
	\begin{align*}
	(DT_{\bar g})\cdot \mathring h=  \frac{n^2 + 2n -4}{4n(n-1)}  R_{\bar g} \Delta_E^{\bar g} \mathring h
	\end{align*}
	for any $\mathring h\in S_{2, \bar g}^{_{TT}}(M)$.
\end{lemma}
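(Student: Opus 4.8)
The plan is to compute the first variation of $T_{\bar g}$ along a $TT$-direction $\mathring h$ by using exactly the same strategy as in the preceding proposition for the Bach tensor. Recall
\begin{align*}
T_g =& (n-2) \left( \nabla^2_g (tr_g S_g) - \frac{1}{n} g\Delta_g (tr_g S_g) \right) + 4(n-1) \left( \mathring S_g^2 - \frac{1}{n} |\mathring S_g|_g^2 g\right) \\
&- \frac{(n-2)(n^2 + 2n -4)}{n}  (tr_g S_g) \mathring{S}_g.
\end{align*}
Since $\mathring S_g = \frac{1}{n-2} E_g$ and $tr_g S_g = \frac{R_g}{2(n-1)}$, I would first rewrite $T_g$ in terms of $E_g$ and $R_g$, obtaining an expression whose terms are (i) second derivatives of $R_g$, (ii) quadratic in $E_g$, and (iii) the product $R_g E_g$.

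Next I would linearize term by term at the Einstein metric $\bar g$, where $E_{\bar g} = 0$. The key simplifications come from Lemma \ref{lem:var_E}: along $\mathring h \in S_{2,\bar g}^{_{TT}}(M)$ one has $(DR_{\bar g})\cdot \mathring h = 0$ and $(DE_{\bar g})\cdot \mathring h = -\frac12 \Delta_E^{\bar g}\mathring h$. Consequently the first group of terms vanishes (they all carry a factor $R_{\bar g}'$ or $\nabla^2 R_{\bar g}'$, which is zero), and the quadratic-in-$E$ terms also vanish because each produces a factor $E_{\bar g} = 0$ when only one of the two $E$'s is differentiated. The only surviving contribution is from the term $-\frac{(n-2)(n^2+2n-4)}{n}(tr_g S_g)\mathring S_g = -\frac{(n^2+2n-4)}{2n(n-1)} R_g E_g$, whose variation along $\mathring h$ is $-\frac{(n^2+2n-4)}{2n(n-1)} R_{\bar g} \cdot (-\frac12 \Delta_E^{\bar g}\mathring h) = \frac{n^2+2n-4}{4n(n-1)} R_{\bar g}\,\Delta_E^{\bar g}\mathring h$, which is exactly the claimed formula.

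There is essentially no serious obstacle here; the proof is a bookkeeping exercise. The one point that needs a little care is confirming that $(DE_{\bar g})\cdot\mathring h$ is again a $TT$-tensor (so that $\Delta_E^{\bar g}\mathring h$ stays traceless and the traceless-part operations in $T_g$ act as expected), which follows from the standard fact that on an Einstein manifold $\Delta_E^{\bar g}$ preserves $S_{2,\bar g}^{_{TT}}(M)$ — a consequence of the contracted second Bianchi identity and commutation of $\delta_{\bar g}$ and $tr_{\bar g}$ with $\Delta_E^{\bar g}$ in the Einstein case (compare Lemma \ref{lem:var_Delta_E}). Once that is noted, matching coefficients gives the stated identity.
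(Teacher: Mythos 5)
Your proposal is correct and follows essentially the same route as the paper: rewrite $T_g$ in terms of $R_g$ and $E_g$, then observe that at an Einstein metric the Hessian-of-$R$ terms and the quadratic-in-$E$ terms drop out by Lemma \ref{lem:var_E} and $E_{\bar g}=0$, leaving only the variation of the $R_g E_g$ term. Your coefficient bookkeeping matches the paper's, so nothing further is needed.
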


\begin{proof}
	Rewriting the tensor $T_g$ in terms of scalar curvature $R_g$ and traceless Ricci curvature $E_g$, we get
	\begin{align*}
	T_g =& \frac{n-2}{2(n-1)} \left( \nabla^2_g R_g - \frac{1}{n} g \Delta_g R_g \right) + \frac{4(n-1)}{(n-2)^2} \left( E_g^2  - \frac{1}{n} | E_g |_g^2 g\right) - \frac{n^2 + 2n -4}{2n(n-1)}  R_g E_g.
	\end{align*}
	Then the result follows from the fact that $E_{\bar g} = 0$ and Lemma \ref{lem:var_E}.
\end{proof}

\vskip.1in

Now combining first variations of $B_g$ and $T_g$, we derive
\begin{proposition}\label{prop:second_var_mathring_J}
	Suppose $\bar g$ is an Einstein metric, then
	\begin{align*}
	(D\mathring J_{\bar{g}}) \cdot \mathring h =&  \frac{1}{2(n-2)^2}\left(-\Delta_E^{\bar g} + \frac{(n-2)^3(n + 2)}{8n(n-1)^2}  R_{\bar{g}} \right) (-\Delta_E^{\bar g}  \mathring h),
	\end{align*}	
	for any $\mathring h \in S_{2,\bar g}^{_{TT}}(M)$. \\
\end{proposition}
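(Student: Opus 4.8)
The plan is to simply assemble the first variation of $\mathring J_{\bar g}$ from the pieces already computed in this section. Recall from Definition~\ref{def:J_tensor} (and the displayed formula for $J_g$) that
\begin{align*}
J_g = \frac{1}{n} Q_g g - \frac{1}{n-2} B_g - \frac{n-4}{4(n-1)(n-2)} T_g,
\end{align*}
so passing to traceless parts and linearizing at the Einstein metric $\bar g$ along $\mathring h \in S_{2,\bar g}^{_{TT}}(M)$, the term $\frac1n Q_g g$ contributes only to the trace (its traceless part is $\frac1n Q_g \mathring h$ plus a term proportional to $\bar g$, and after projecting to the traceless part one must be a little careful, but the $\bar g$-direction drops out of $\mathring{(\cdot)}$). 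The cleanest route is to write $\mathring J_{\bar g} = -\frac{1}{n-2}\big(B_{\bar g} + \frac{n-4}{4(n-1)} T_{\bar g}\big)$, exactly as was done in the proof of Theorem~\ref{thm:rigidity_J_Einstein} (the $Q_g g$ term has vanishing traceless part, and $B_g$, $T_g$ are already traceless), and differentiate this identity.

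First I would substitute the two formulas just established: from the Proposition on the first variation of the Bach tensor,
\begin{align*}
(DB_{\bar g})\cdot \mathring h = -\frac{1}{2(n-2)}\left(-\Delta_E^{\bar g} + \frac{n-2}{n(n-1)} R_{\bar g}\right)(-\Delta_E^{\bar g}\mathring h),
\end{align*}
and from the Lemma on the first variation of $T_g$,
\begin{align*}
(DT_{\bar g})\cdot \mathring h = \frac{n^2+2n-4}{4n(n-1)} R_{\bar g}\,\Delta_E^{\bar g}\mathring h = -\frac{n^2+2n-4}{4n(n-1)} R_{\bar g}\,(-\Delta_E^{\bar g}\mathring h).
\end{align*}
Then $(D\mathring J_{\bar g})\cdot\mathring h = -\frac{1}{n-2}\big[(DB_{\bar g})\cdot\mathring h + \frac{n-4}{4(n-1)}(DT_{\bar g})\cdot\mathring h\big]$, and I would factor out $\frac{1}{2(n-2)^2}$ and $(-\Delta_E^{\bar g}\mathring h)$, leaving a purely algebraic identity in the coefficient of $R_{\bar g}$ to verify:
\begin{align*}
\frac{1}{n-2}\cdot\frac{n-2}{n(n-1)} - \frac{1}{n-2}\cdot\frac{n-4}{4(n-1)}\cdot\frac{n^2+2n-4}{2n(n-1)}\cdot\frac{1}{?}
\end{align*}
must be rearranged to match $\frac{(n-2)^3(n+2)}{8n(n-1)^2}$ after the overall $\frac{1}{2(n-2)^2}$ normalization is pulled out; this is a one-line polynomial simplification, checking that $4(n-1)(n-2)^2 \cdot \frac{n-2}{n(n-1)} - (n-4)(n-2)(n^2+2n-4)\cdot\frac{1}{n}$ collapses to $\frac{(n-2)^3(n+2)}{2(n-1)}$ (or the appropriate constant), i.e. $8(n-2)^3(n-1) - 2(n-4)(n-2)(n^2+2n-4) = \tfrac{?}{}$, which one expands and confirms equals the stated numerator.

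The only subtle point — and the one I would be most careful about — is the bookkeeping of which operator is being applied to which: $-\Delta_E^{\bar g}$ is the positive operator here, and the two displayed variations must both be written with $(-\Delta_E^{\bar g}\mathring h)$ as the inner argument before combining, so that the outer polynomial factors in $-\Delta_E^{\bar g}$ add correctly. There is no analytic difficulty at all: everything reduces to arithmetic with the constants $A_n,B_n,C_n$ already buried inside the earlier lemmas, plus the elementary identity $E_{\bar g}=0$, $R_{\bar g}'=0$ on the $TT$-direction (Lemma~\ref{lem:var_E}) and the commutation $(\Delta_{\bar g}E)' = \Delta_{\bar g}E'$ (Lemma~\ref{lem:var_Delta_E}). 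So I would present the proof as: recall $\mathring J_{\bar g} = -\frac{1}{n-2}(B_{\bar g} + \frac{n-4}{4(n-1)} T_{\bar g})$; linearize along $\mathring h$; plug in the two preceding formulas; collect terms; and do the polynomial simplification to obtain the claimed expression, noting equality of the coefficient $\frac{(n-2)^3(n+2)}{8n(n-1)^2}$ by direct computation.
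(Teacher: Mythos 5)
Your proposal is correct and is essentially the paper's own (implicit) argument: Proposition \ref{prop:second_var_mathring_J} is obtained exactly by linearizing the identity $\mathring J_{\bar g} = -\tfrac{1}{n-2}\bigl(B_{\bar g} + \tfrac{n-4}{4(n-1)} T_{\bar g}\bigr)$ and substituting the two preceding variation formulas. The arithmetic you deferred does close up, via $8(n-1) + (n-4)(n^2+2n-4) = (n-2)^2(n+2)$, which yields the stated coefficient $\tfrac{(n-2)^3(n+2)}{8n(n-1)^2}$.
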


According to Definition \ref{stableEindef}, immediately we obtain the following characterization of the operator $D \mathring J_{\bar g}$ for strictly stable Einstein metrics $\bar g$:
\begin{corollary}\label{cor:stable_Einstein_DJ_positive}
	Suppose $\bar g$ is an Einstein metric, then $D \mathring{J}_{\bar g}$ is a formally self-adjoint operator on $S_{2,\bar g}^{_{TT}}(M)$. Moreover, $D \mathring{J}_{\bar g}$ is a positive operator, if $\bar g$ is a strictly stable Einstein metric with non-negative scalar curvature.
\end{corollary}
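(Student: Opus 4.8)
The claim to be proved is Corollary \ref{cor:stable_Einstein_DJ_positive}: that $D\mathring{J}_{\bar g}$ is formally self-adjoint on $S_{2,\bar g}^{_{TT}}(M)$, and positive when $\bar g$ is strictly stable Einstein with non-negative scalar curvature. By the preceding Proposition \ref{prop:second_var_mathring_J}, we already have the explicit formula
\begin{align*}
(D\mathring J_{\bar g}) \cdot \mathring h = \frac{1}{2(n-2)^2}\left(-\Delta_E^{\bar g} + \frac{(n-2)^3(n+2)}{8n(n-1)^2} R_{\bar g}\right)(-\Delta_E^{\bar g} \mathring h),
\end{align*}
so the whole statement reduces to reading off properties of this composition of operators.

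\textbf{Self-adjointness.} First I would check that $-\Delta_E^{\bar g}$ maps $S_{2,\bar g}^{_{TT}}(M)$ into itself: on an Einstein manifold the rough Laplacian commutes with the trace and, via the second Bianchi identity, with the divergence on transverse-traceless tensors, so $\mathring h \in S_{2,\bar g}^{_{TT}}(M)$ implies $\Delta_E^{\bar g}\mathring h \in S_{2,\bar g}^{_{TT}}(M)$. (Alternatively this is standard and can be cited from \cite{Bes87}.) Since $\Delta_E^{\bar g} = \Delta_{\bar g} + 2Rm_{\bar g}\cdot$ is formally self-adjoint on $S_2(M)$ — the rough Laplacian is self-adjoint and the curvature term is a pointwise symmetric endomorphism — and the scalar-curvature term $R_{\bar g}$ is a constant (Einstein), the operator $-\Delta_E^{\bar g} + c R_{\bar g}$ is formally self-adjoint, and hence so is its composition with $-\Delta_E^{\bar g}$, being a product of two commuting self-adjoint operators preserving the subspace. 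This gives the first assertion.

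\textbf{Positivity.} For the second assertion, suppose $\bar g$ is strictly stable Einstein with $R_{\bar g}\geq 0$. Strict stability means $\langle \mathring h, -\Delta_E^{\bar g}\mathring h\rangle_{L^2} > 0$ for all $\mathring h \in S_{2,\bar g}^{_{TT}}(M)\setminus\{0\}$, i.e. $-\Delta_E^{\bar g}$ is a positive (self-adjoint, with discrete spectrum since $M$ is closed) operator on this space, with smallest eigenvalue $\mu_1 > 0$. Then for $\mathring h \neq 0$, expanding in an orthonormal eigenbasis $\{\phi_j\}$ with $-\Delta_E^{\bar g}\phi_j = \mu_j \phi_j$ and writing $\mathring h = \sum a_j \phi_j$,
\begin{align*}
\langle \mathring h, (D\mathring J_{\bar g})\mathring h\rangle_{L^2} = \frac{1}{2(n-2)^2}\sum_j a_j^2\, \mu_j\left(\mu_j + \frac{(n-2)^3(n+2)}{8n(n-1)^2}R_{\bar g}\right) > 0,
\end{align*}
since each factor $\mu_j > 0$ and $\mu_j + \frac{(n-2)^3(n+2)}{8n(n-1)^2}R_{\bar g} \geq \mu_j > 0$ using $R_{\bar g}\geq 0$ and $n\geq 3$. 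Hence $D\mathring J_{\bar g}$ is positive. I do not anticipate a real obstacle here: the content is entirely in Proposition \ref{prop:second_var_mathring_J}, and the only point requiring a word of justification is the invariance of $S_{2,\bar g}^{_{TT}}(M)$ under $\Delta_E^{\bar g}$; the positivity is then a formal spectral argument. If one wanted to avoid even the eigenbasis decomposition, one could instead note that $-\Delta_E^{\bar g} + cR_{\bar g}$ and $-\Delta_E^{\bar g}$ commute and are both positive self-adjoint on the subspace, so their composition is positive self-adjoint.
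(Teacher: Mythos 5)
Your proof is correct and takes essentially the same route as the paper, which states the corollary as an immediate consequence of Proposition \ref{prop:second_var_mathring_J} and Definition \ref{stableEindef} without writing out any further argument. The details you supply --- that $\Delta_E^{\bar g}$ preserves $S_{2,\bar g}^{_{TT}}(M)$ on an Einstein manifold, and the spectral (or equivalently the $\|\Delta_E^{\bar g}\mathring h\|_{L^2}^2 + c R_{\bar g}\langle \mathring h, -\Delta_E^{\bar g}\mathring h\rangle_{L^2}$) argument for positivity --- are exactly the standard facts left implicit there.
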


\ \\

\section{The key functional and its variations} \label{varfunc}

For completeness, we start with generic $J$-Einstein metrics instead of more restricted Einstein metrics.\\

Suppose $(M^n, \bar g)$ is a closed $J$-Einstein manifold and consider the functional
\begin{align}
	\mathcal{F}_{M,\bar g} (g) = \Vol_M(g)^{\frac{4}{n}} \int_M Q(g) dv_{\bar g}.
\end{align}
Note that the volume form $dv_{\bar{g}}$ is independent of $g$ and thus the functional $\mathcal{F}_{M,\bar g}$ is \emph{scaling invariant}:
\begin{align}
	\mathcal{F}_{M,\bar g} ( c^2 g) = \mathcal{F}_{M,\bar g} (g)
\end{align}
for any real number $c \neq 0$.\\

The functional $\mathcal F_{M, \bar g}$ is particularly designed for our purpose. This can be glimpsed from its variational properties.
\begin{proposition}\label{prop:critical_pt_J_Einstein}
	The $J$-Einstein metric $\bar g$ is a critical point of the functional $\mathcal{F}_{M,\bar g}$.
\end{proposition}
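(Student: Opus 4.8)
The plan is to compute the first variation of $\mathcal{F}_{M,\bar g}$ at $\bar g$ in an arbitrary direction $h \in S_2(M)$ and show it vanishes precisely because $\bar g$ is $J$-Einstein. First I would use the product rule together with the scaling-invariance already noted: since $\mathcal{F}_{M,\bar g}(c^2 g) = \mathcal{F}_{M,\bar g}(g)$, it suffices in fact to test against directions that are, say, $L^2(\bar g)$-orthogonal to $\bar g$ itself, but it is cleaner to just carry the general computation. Writing $V(g) = \Vol_M(g)$ and using $(D\Vol_{M,\bar g})\cdot h = \tfrac12 \int_M (tr_{\bar g} h)\, dv_{\bar g}$ from Section \ref{sec:basic_var_form}, the Leibniz rule gives
\begin{align*}
(D\mathcal{F}_{M,\bar g})\cdot h = \frac{4}{n} V(\bar g)^{\frac{4}{n}-1}\left(\frac12\int_M (tr_{\bar g}h)\,dv_{\bar g}\right)\int_M Q_{\bar g}\,dv_{\bar g} + V(\bar g)^{\frac{4}{n}}\int_M (\Gamma_{\bar g} h)\,dv_{\bar g}.
\end{align*}

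The key step is to rewrite the second term using the $L^2$-adjoint relation $\int_M (\Gamma_{\bar g} h)\,dv_{\bar g} = \int_M \langle \Gamma_{\bar g}^*(1), h\rangle_{\bar g}\,dv_{\bar g}$, and then invoke the definition $J_{\bar g} = -\tfrac12 \Gamma_{\bar g}^*(1)$, so that $\Gamma_{\bar g}^*(1) = -2 J_{\bar g} = -2\Lambda \bar g$ by the $J$-Einstein hypothesis. Hence $\int_M (\Gamma_{\bar g} h)\,dv_{\bar g} = -2\Lambda \int_M \langle \bar g, h\rangle_{\bar g}\,dv_{\bar g} = -2\Lambda \int_M (tr_{\bar g} h)\,dv_{\bar g}$. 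For the first term I would use that a $J$-Einstein metric has constant $Q$-curvature — this follows because $tr_{\bar g} J_{\bar g} = Q_{\bar g}$ (one checks $tr_{\bar g}$ of the defining expression for $J_g$ recovers $Q_g$, since the Bach and $T$ tensors are trace-free), so $J_{\bar g} = \Lambda\bar g$ forces $Q_{\bar g} = n\Lambda$, a constant. Therefore $\int_M Q_{\bar g}\,dv_{\bar g} = n\Lambda\, V(\bar g)$, and the first term becomes $\frac{4}{n}V(\bar g)^{\frac{4}{n}-1}\cdot \frac12 \cdot n\Lambda\, V(\bar g)\int_M(tr_{\bar g}h)\,dv_{\bar g} = 2\Lambda V(\bar g)^{\frac 4n}\int_M (tr_{\bar g}h)\,dv_{\bar g}$.

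Adding the two contributions, $(D\mathcal{F}_{M,\bar g})\cdot h = V(\bar g)^{\frac 4n}\left(2\Lambda - 2\Lambda\right)\int_M (tr_{\bar g}h)\,dv_{\bar g} = 0$ for every $h$, which is exactly the claim. I do not anticipate a serious obstacle here; the only points requiring a little care are (i) justifying the self-adjointness/integration-by-parts identity $\int_M \Gamma_{\bar g}h\,dv_{\bar g} = \int_M \langle \Gamma_{\bar g}^*(1),h\rangle_{\bar g}\,dv_{\bar g}$, which is just the definition of the formal adjoint applied with the constant function $1$ on a closed manifold (no boundary terms), and (ii) confirming the normalization $tr_{\bar g} J_g = Q_g$ so that the constancy of $Q_{\bar g}$ is genuinely available — this is a direct trace computation on the displayed formula for $J_g$ using $tr_g B_g = 0$ and $tr_g T_g = 0$. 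If one prefers to avoid even that, one can instead note that the variation of the constant piece only ever pairs against $tr_{\bar g}h$, and appeal directly to scaling invariance to fix the coefficient, but the trace computation is the more transparent route.
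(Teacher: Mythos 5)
Your computation is correct and follows essentially the same route as the paper: Leibniz rule, the adjoint identity $\int_M \Gamma_{\bar g}h\,dv_{\bar g}=\int_M\langle h,\Gamma_{\bar g}^*1\rangle_{\bar g}\,dv_{\bar g}$, the identification $\Gamma_{\bar g}^*1=-2J_{\bar g}$, and the constancy of $Q_{\bar g}$ (which the paper likewise deduces from the $J$-Einstein condition, writing the first variation as $-2\Vol_M(\bar g)^{4/n}\int_M\langle h, J_{\bar g}-\tfrac1n\overline{Q_{\bar g}}\,\bar g\rangle_{\bar g}\,dv_{\bar g}$ before concluding it vanishes). Your explicit check that $tr_{\bar g}J_g=Q_g$ via the tracelessness of $B_g$ and $T_g$ is a correct justification of a fact the paper simply cites from its Definition \ref{def:J_tensor}.
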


\begin{proof}
	For any $h \in S_2(M)$, 
	\begin{align*}
		(D \mathcal{F}_{M,\bar g}) \cdot h =& \Vol_M(\bar g)^{\frac{4}{n}} \int_M \left((D Q_{\bar g}) \cdot h\right) dv_{\bar g} + \frac{4}{n}\Vol_M(\bar g)^{\frac{4}{n} - 1} \Vol'_{M, \bar g} \int_M Q_{\bar g} dv_{\bar g}\\
		=& \Vol_M(\bar g)^{\frac{4}{n}} \left[ \int_M \langle \Gamma_{\bar g} h, 1 \rangle_{\bar g} dv_{\bar g} + \frac{2}{n} \left(\int_M (tr_{\bar g} h) dv_{\bar g}\right) \Vol_M(\bar g)^{ - 1} \int_M Q_{\bar g} dv_{\bar g} \right]\\
		=& \Vol_M(\bar g)^{\frac{4}{n}} \left[ \int_M \langle  h, \Gamma_{\bar g}^*1 \rangle_{\bar g} dv_{\bar g} + \frac{2}{n} \overline{Q_{\bar g}} \int_M (tr_{\bar g} h) dv_{\bar g}  \right]\\
		=& -2 \Vol_M(\bar g)^{\frac{4}{n}}  \int_M \left\langle  h, J_{\bar g} - \frac{1}{n} \overline{Q_{\bar g}} \bar g \right\rangle_{\bar g} dv_{\bar g} 
	\end{align*}
	where 
	\begin{align*}
		\overline{Q_{\bar g}} :=  \Vol_M(\bar g)^{ - 1} \int_M Q_{\bar g} dv_{\bar g}
	\end{align*}
	is the average of $Q_{\bar g}$ on $M$.
	
	According to Definition \ref{def:J_tensor}, a $J$-Einstein metric has constant $Q$-curvature and hence 
	\begin{align*}
		J_{\bar g} - \frac{1}{n} \overline{Q_{\bar g}} \bar g = 0,
	\end{align*}
	which implies $\bar g$ is a critical point of $\mathcal F_{M, \bar g}$.
\end{proof}

In principle, one can obtain the second variation of $\mathcal F_{M, \bar g}$ from a formal calculation. However, due to its complicated expression, the calculation would be rather messy. Instead, with an elegant trick, we can minimize the work through the first variation of $J$-tensor. Analogous to the previous section, we adopt the convention that $'$ stands for first and $''$ for second variations with certain $h\in S_2(M)$.\\

We start with a useful observation:
\begin{lemma}\label{lem:tr_circ_J'}
	For any metric $g$ and $h \in S_2(M)$,
	\begin{align}
		tr_g \mathring{J}_g'= \langle \mathring J_g, \mathring h\rangle_g,
	\end{align}
where $\mathring{J}_g'$ is the first variation of traceless $J$-tensor and $\mathring h$ is the traceless part of $h$.
\end{lemma}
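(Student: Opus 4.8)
The plan is to compute $\mathring{J}_g'$ directly, extract its trace, and observe that the trace picks out precisely the inner product $\langle \mathring{J}_g, \mathring{h}\rangle_g$. First I would write $J_g = \frac{1}{n}Q_g\, g - \frac{1}{n-2}B_g - \frac{n-4}{4(n-1)(n-2)}T_g$, and note that both $B_g$ and $T_g$ are trace-free tensors (the Bach tensor is always trace-free, and $T_g$ is trace-free by construction, since each of its three groups of terms is written as a trace-free combination). Hence the trace-free part is $\mathring{J}_g = -\frac{1}{n-2}B_g - \frac{n-4}{4(n-1)(n-2)}T_g$, i.e. $\mathring{J}_g = J_g - \frac{1}{n}(tr_g J_g)\, g$ with $tr_g J_g = Q_g$.

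Next I would differentiate the general identity $\mathring{J}_g = J_g - \frac{1}{n}(tr_g J_g)\, g$ along a variation $h$. Using $\delta(tr_g J_g) = tr_g(J_g') - \langle J_g, h\rangle_g$ (from $(g^{ij})' = -h^{ij}$) and $g' = h$, one gets
\begin{align*}
\mathring{J}_g' = J_g' - \frac{1}{n}\big(tr_g(J_g') - \langle J_g, h\rangle_g\big)g - \frac{1}{n}(tr_g J_g)\, h.
\end{align*}
Now take $tr_g$ of both sides: $tr_g(\mathring{J}_g') = tr_g(J_g') - \big(tr_g(J_g') - \langle J_g, h\rangle_g\big) - \frac{1}{n}(tr_g J_g)(tr_g h) = \langle J_g, h\rangle_g - \frac{1}{n}(tr_g J_g)(tr_g h)$. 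Writing $h = \mathring{h} + \frac{1}{n}(tr_g h)g$ and using $\langle g, \mathring{h}\rangle_g = 0$ together with $\langle J_g, g\rangle_g = tr_g J_g$, this collapses to $\langle J_g - \frac{1}{n}(tr_g J_g)g, \mathring{h}\rangle_g = \langle \mathring{J}_g, \mathring{h}\rangle_g$, which is the claim.

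There is really no serious obstacle here; the only thing to be careful about is bookkeeping of which trace is taken with respect to the varying metric versus the background, and the sign coming from $(g^{ij})' = -h^{ij}$. One could equivalently argue even more cheaply: since $\mathring{J}_g$ is pointwise the trace-free projection of $J_g$, and the trace-free projection is an orthogonal projection in the fiber metric, the derivative of its pointwise trace is governed entirely by how the trace pairing changes, which is exactly the insertion of $\mathring{h}$ against $\mathring{J}_g$; the terms involving $J_g'$ cancel because $tr_g$ annihilates the trace-free part. I would present the computation above as it makes the cancellation transparent and avoids invoking projection-operator formalism.
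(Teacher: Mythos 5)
Your argument is correct and is essentially the paper's own proof in slightly longer form: both hinge on the variation $(g^{ij})'=-h^{ij}$ of the inverse metric and on $\langle \mathring J_g,g\rangle_g=0$. The paper just gets there a line faster by differentiating the identity $tr_g\mathring J_g=0$ directly, which is exactly the ``cheaper'' observation you mention at the end, so nothing is missing.
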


\begin{proof}
	Since $tr_g \mathring{J}_g = 0$, differentiating both sides of the equation gives
	\begin{align*}
		0 = tr_g \mathring{J}_g' + (g^{-1})'\cdot \mathring{J}_g = tr_g \mathring{J}_g' - h\cdot \mathring{J}_g.
	\end{align*}
	That is,
	\begin{align*}
		tr_g \mathring{J}_g'= \langle \mathring J_g, h\rangle_g = \left\langle \mathring J_g, \mathring h + \frac{1}{n}(tr_g h) g \right\rangle_g = \langle \mathring J_g, \mathring h \rangle_g.
	\end{align*}	
\end{proof}

With the above observation, we can express the integral of second variation of $Q$-curvature in a compact form.
\begin{proposition}\label{prop:second_variations_integral_Q}
	Suppose $\bar g$ is a $J$-Einstein metric, then we have
	\begin{align*}
	\int_M Q_{\bar g}'' dv_{\bar g} =
	&-2  \int_M \left[ \langle \mathring{J}_{\bar g}', \mathring{h} \rangle_{\bar g}  - \frac{1}{n} Q_{\bar g} |\mathring{h}|_{\bar g}^2 + \left(  \frac{n+4}{4n} Q_{\bar g}'  + \frac{n-2}{2n^2} Q_{\bar g} (tr_{\bar g} h) \right) (tr_{\bar g} h) \right] dv_{\bar g}
	\end{align*}
	holds for any $h\in S_2(M)$.	
\end{proposition}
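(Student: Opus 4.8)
The plan is to compute $\int_M Q_{\bar g}'' \, dv_{\bar g}$ by differentiating the first-variation identity twice, using the $J$-tensor as an intermediary. The starting point is the computation in the proof of Proposition \ref{prop:critical_pt_J_Einstein}: for any metric $g$ and any $h \in S_2(M)$,
\begin{align*}
\int_M \langle \Gamma_g h, 1 \rangle_g \, dv_g = \int_M \langle h, \Gamma_g^* 1 \rangle_g \, dv_g = -2 \int_M \langle h, J_g \rangle_g \, dv_g,
\end{align*}
so that the first variation of $\int_M Q(g) \, dv_{\bar g}$ — being careful that the integration is against the \emph{fixed} volume form $dv_{\bar g}$, not $dv_g$ — must be rewritten in terms of $J_g$. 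First I would establish the exact first-variation formula $\int_M Q_g' \, dv_{\bar g} = -2 \int_M \langle J_g, h \rangle_{\bar g} \, dv_{\bar g} + (\text{correction terms from } dv_g \text{ vs } dv_{\bar g})$, or more cleanly work with $\int_M Q_g \, dv_g$ first and then account for the difference $dv_g = (1 + \tfrac12 tr_{\bar g} h + \dots) dv_{\bar g}$.

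Next I would differentiate once more. Decompose $J_g = \mathring J_g + \tfrac1n (tr_g J_g) g = \mathring J_g + \tfrac1n Q_g g$ (using $tr_g J_g = Q_g$, which follows from the definition of $J_g$ since $B_g$ and $T_g$ are trace-free). Differentiating $-2\langle J_g, h\rangle$-type expressions produces $\mathring J_g'$, $Q_g'$, and terms where the derivative lands on the metric contractions or on $dv_g$; here I would invoke Lemma \ref{lem:tr_circ_J'} to handle the trace of $\mathring J_g'$, namely $tr_{\bar g} \mathring J_{\bar g}' = \langle \mathring J_{\bar g}, \mathring h \rangle_{\bar g}$, and evaluate all coefficient terms at $\bar g$ using the $J$-Einstein condition $J_{\bar g} = \tfrac1n Q_{\bar g} \bar g$ with $Q_{\bar g}$ constant (so $\mathring J_{\bar g} = 0$ and $Q_{\bar g}$ comes out of integrals). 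The bookkeeping involves: (i) the second variation $(g^{-1})'' $ and the product-rule cross term $(g^{-1})' \cdot (\cdots)'$ in the pairing $\langle \cdot, \cdot\rangle_g$; (ii) the first and second variations of $dv_g$, for which the paper already records $(D\Vol_{M,g})\cdot h = \tfrac12\int tr_g h \, dv_g$ and the analogous pointwise statements; (iii) collecting the $(tr_{\bar g} h)$-linear and $(tr_{\bar g} h)^2$ terms, as well as the $|\mathring h|_{\bar g}^2$ term coming from $\langle \mathring J_g', \mathring h\rangle$ paired against the $\tfrac1n Q_{\bar g} g$ piece of $J$ — this is precisely the source of the $+\tfrac1n Q_{\bar g}|\mathring h|_{\bar g}^2$ in the claimed formula (up to the overall sign).

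The main obstacle I anticipate is purely organizational rather than conceptual: keeping track of where each derivative lands (on $Q$, on the metric pairing, on the volume form $dv_g$) and ensuring the numerical coefficients $\tfrac{n+4}{4n}$ and $\tfrac{n-2}{2n^2}$ come out correctly. In particular the coefficient $\tfrac{n+4}{4n}$ in front of $Q_{\bar g}'(tr_{\bar g} h)$ should arise as a combination of $\tfrac14$ (from differentiating $dv_g$ once while the other derivative hits $Q_g$, giving $\tfrac12 \cdot \tfrac12$) together with a $\tfrac1n$-type contribution from the decomposition of $J_g$ and the relation between $Q_g'$ and $tr_{\bar g} \mathring J_{\bar g}'$; I would verify this by cross-checking against the known scaling invariance, which forces the second variation to vanish on the conformal-to-constant direction $h = \bar g$, i.e. when $\mathring h = 0$ and $tr_{\bar g} h = n$ is constant. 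That consistency check ($\int_M Q_{\bar g}'' dv_{\bar g}$ restricted to $h = c\bar g$ must reproduce the second derivative of $c \mapsto c^{n} \cdot (\text{const})$ appropriately, or rather vanish once combined with the volume factor in $\mathcal F_{M,\bar g}$) is the safeguard I would use to pin down every constant. Finally I would note that the formula is stated for a $J$-Einstein $\bar g$ (not merely Einstein), so no Einstein-specific simplification such as Lemma \ref{lem:var_E} is used here — only $\mathring J_{\bar g} = 0$ and $Q_{\bar g}$ constant.
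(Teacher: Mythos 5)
Your plan is correct and follows essentially the same route as the paper: differentiate $\int_M Q_g\,dv_g$ twice using $\Gamma_g^*1=-2J_g$, subtract the $(dv_g)'$ and $(dv_g)''$ contributions, decompose $J_g=\mathring J_g+\tfrac1n Q_g g$ and $h=\mathring h+\tfrac1n(tr_g h)g$ with Lemma \ref{lem:tr_circ_J'}, and only then impose $\mathring J_{\bar g}=0$ with $Q_{\bar g}$ constant. Your accounting of where the coefficient $\tfrac{n+4}{4n}=\tfrac14+\tfrac1n$ comes from (volume-form variation plus the trace part of $J_g'$) matches the paper's computation, so executing the outline would reproduce the stated formula.
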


\begin{proof}
	We start with an arbitrary Riemannian metric $g$ on $M$. It is straightforward that for any $h \in S_2(M)$, 
	\begin{align*}
	&\left( \int_M Q_g dv_g \right)'' \\
	=& \left( \int_M Q_g' dv_g + \int_M Q_g \left(dv_g \right)' \right)'\\
	=& \left( \int_M \langle \Gamma_g^*1, h\rangle_g dv_g \right)' + \int_M Q_g' \left(dv_g \right)' + \int_M Q_g \left(dv_g \right)''\\	
	=& -2  \int_M\left[ \langle J_g', h\rangle_g + 2\left\langle \left(g^{-1}\right)', J_g\times h \right \rangle_g + \frac{1}{2} \langle J_g, h\rangle_g (tr_g h)\right] dv_g + \int_M Q_g' \left(dv_g \right)' + \int_M Q_g \left(dv_g \right)''\\
	=& -2  \int_M\left[ \langle J_g', h\rangle_g - 2\left\langle J_g, h^2 \right \rangle_g + \frac{1}{2} \langle J_g, h\rangle_g (tr_g h)\right] dv_g + \int_M Q_g' \left(dv_g \right)' + \int_M Q_g \left(dv_g \right)''.
	\end{align*}
	Furthermore, the decomposition $$J_g = \mathring{J}_g + \frac{1}{n} Q_g g$$ yields $$J_g' = \mathring{J}_g' + \frac{1}{n} Q_g' g + \frac{1}{n} Q_g h.$$ 
	Together with variational formulas in Section \ref{sec:basic_var_form}, we have
	\begin{align*}
	&\int_M Q_g'' dv_g \\
	=&\left( \int_M Q_g dv_g \right)'' - 2 \int_M Q_g' \left(dv_g \right)' - \int_M Q_g \left(dv_g \right)'' \\
	=& -2 \int_M \left[\left\langle \mathring{J}_g' + \frac{1}{n} Q_g' g + \frac{1}{n} Q_g h, h \right\rangle_g -2 \left\langle \mathring{J}_g + \frac{1}{n} Q_g g, h^2 \right \rangle_g \right] dv_g \\
	&-  \int_M \left[ \left\langle \mathring{J}_g + \frac{1}{n} Q_g g, h \right \rangle_g +\frac{1}{2}Q'_{g}\right](tr_g h) dv_g \\
	=& -2  \int_M \left[ \langle \mathring{J}_g', h \rangle_g -2 \left\langle \mathring{J}_g + \frac{1}{2n} Q_g g, h^2 \right \rangle_g+ \frac{1}{2}\left( \frac{n+4}{2n} Q_g'  + \frac{1}{n} Q_g (tr_g h) + \langle \mathring{J}_g , h \rangle_g \right)(tr_g h) \right] dv_g.
	\end{align*}
	From the decomposition $h = \mathring{h} + \frac{1}{n}(tr_g h) g$ and Lemma \ref{lem:tr_circ_J'}, we can further simplify the expression to the following form:
	\begin{align*}
	&\int_M Q_g'' dv_g \\
	=& -2  \int_M \left[ \left\langle \mathring{J}_g', \mathring{h} + \frac{1}{n}(tr_g h) g \right\rangle_g -2 \left\langle \mathring{J}_g + \frac{1}{2n} Q_g g, \mathring{h}^2 + \frac{2}{n} (tr_g h) \mathring{h} + \frac{1}{n^2}(tr_g h)^2 g \right \rangle_g \right] dv_g \\
	&- \int_M \left[\frac{n+4}{2n} Q_g'  + \frac{1}{n} Q_g (tr_g h) + \left\langle \mathring{J}_g , \mathring{h} + \frac{1}{n}(tr_g h) g \right \rangle_g \right](tr_g h) dv_g\\
	=& -2  \int_M \left[ \langle \mathring{J}_g', \mathring{h} \rangle_g -2 \left\langle \mathring{J}_g + \frac{1}{2n} Q_g g, \mathring{h}^2  \right \rangle_g \right] dv_g \\
	&-\int_M \left[  \frac{n+4}{2n} Q_g'  + \frac{(n-2)}{n^2} Q_g (tr_g h) + \frac{(n-6)}{n} \langle \mathring{J}_g , \mathring{h}\rangle_g \right](tr_g h) dv_g.
	\end{align*}
	 
	In particular for $J$-Einstein $\bar g$, we have $\mathring{J}_{\bar g}=0$ and hence
	\begin{align*}
	\int_M Q_{\bar g}'' dv_{\bar g} 
	=& -2  \int_M \left[ \langle \mathring{J}_{\bar g}', \mathring{h} \rangle_{\bar g}  - \frac{1}{n} Q_{\bar g} |\mathring{h}|_{\bar g}^2 + \left(  \frac{n+4}{4n} Q_{\bar g}'  + \frac{n-2}{2n^2} Q_{\bar g} (tr_{\bar g} h) \right) (tr_{\bar g} h) \right] dv_{\bar g}.
	\end{align*}
\end{proof}
\vskip.1in

Now we calculate the second variation of the functional $\mathcal{F}_{M,\bar g}$ at a $J$-Einstein metric $\bar g$.
\begin{proposition}\label{svarF} 
For $n\geq 3$, suppose $(M^{n},\bar{g})$ is $J$-Einstein, then for any $h\in S_2(M)$ we have
\begin{align*}
&\Vol_M(\bar g)^{-\frac{4}{n}} \left((D^2 \mathcal{F}_{M,\bar g}) \cdot (h, h)\right) \\
=&-2  \int_M  \left[ \langle \mathring h , (D \mathring{J}_{\bar g}) \cdot \mathring{h} \rangle_{\bar g}+ \frac{1}{n} \left( tr_{\bar g} ( (D \mathring{J}_{\bar g})^{*} \cdot \mathring{h}) + \frac{n+4}{4} (\Gamma_{\bar g} \mathring h)\right) (tr_{\bar g} h) \right]  dv_{\bar g} \\
&-\frac{n+4}{2n^2} \int_M  \left[ ( tr_{\bar g} h - \overline{tr_{\bar g} h})   \mathscr{L}_{\bar g} (tr_{\bar g} h - \overline{tr_{\bar g} h}) \right] dv_{\bar g},
\end{align*}
where the operator $\mathscr{L}_{\bar g}$ is defined to be
\begin{align*}
	\mathscr{L}_{\bar g} u := tr_{\bar g} \Gamma_{\bar g}^* u = \frac{1}{2} \left( P_{\bar g} - \frac{n+4}{2} Q_{\bar g} \right) u.
\end{align*}
\end{proposition}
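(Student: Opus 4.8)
The plan is to compute $(D^2\mathcal{F}_{M,\bar g})\cdot(h,h)$ at a $J$-Einstein metric $\bar g$ by differentiating the first-variation formula obtained in the proof of Proposition \ref{prop:critical_pt_J_Einstein}. Since $\bar g$ is $J$-Einstein, the first variation vanishes at $\bar g$, so the second variation is independent of reparametrizations of the path of metrics and can be read off from
\begin{align*}
(D^2\mathcal{F}_{M,\bar g})\cdot(h,h) = \Vol_M(\bar g)^{\frac 4n}\left(\int_M Q_{\bar g}''\,dv_{\bar g} + \frac 4n\,\text{(cross terms)} + \frac 4n\left(\frac 4n - 1\right)\text{(volume terms)}\right),
\end{align*}
where the cross and volume terms all carry a factor $\int_M Q_{\bar g}\,dv_{\bar g}$ or $(D\mathcal F)\cdot h$. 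The cleanest route, and the one suggested by the placement of Proposition \ref{prop:second_variations_integral_Q}, is to not expand $\mathcal{F}$ directly but instead to use $\mathcal{F}_{M,\bar g}(g) = \Vol_M(g)^{4/n}\int_M Q_g\,dv_{\bar g}$, differentiate twice, and substitute the compact expression for $\int_M Q_{\bar g}''\,dv_{\bar g}$ from Proposition \ref{prop:second_variations_integral_Q}. This already contains the term $\langle \mathring J_{\bar g}',\mathring h\rangle_{\bar g} = \langle (D\mathring J_{\bar g})\cdot h,\mathring h\rangle_{\bar g}$, which by self-adjointness (Corollary \ref{cor:stable_Einstein_DJ_positive} gives self-adjointness at Einstein $\bar g$; in the $J$-Einstein generality one keeps $(D\mathring J_{\bar g})^*$ explicit) splits into the $\langle\mathring h,(D\mathring J_{\bar g})\cdot\mathring h\rangle$ piece plus a term pairing $tr_{\bar g}h$ with $tr_{\bar g}((D\mathring J_{\bar g})^*\cdot\mathring h)$.

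First I would write $\int_M Q_{\bar g}''\,dv_{\bar g}$ using Proposition \ref{prop:second_variations_integral_Q}, then assemble the remaining pieces of $(D^2\mathcal F_{M,\bar g})\cdot(h,h)$: the term $2\cdot\frac 4n\Vol^{4/n-1}\Vol'\int_M Q_{\bar g}'\,dv_{\bar g}$, the term $\frac 4n(\frac 4n-1)\Vol^{4/n-2}(\Vol')^2\int_M Q_{\bar g}\,dv_{\bar g}$, and $\frac 4n\Vol^{4/n-1}\Vol''\int_M Q_{\bar g}\,dv_{\bar g}$. Using $\Vol' = \frac12\int_M tr_{\bar g}h\,dv_{\bar g}$, $\Vol'' = \frac14\int_M((tr_{\bar g}h)^2 - 2|h|^2)\,dv_{\bar g}$, and $Q_{\bar g}$ constant (so $\int_M Q_{\bar g}\,dv_{\bar g} = Q_{\bar g}\Vol_M(\bar g)$ and $\overline{Q_{\bar g}} = Q_{\bar g}$), these collapse into expressions involving only $\overline{tr_{\bar g}h}$ and $\int_M tr_{\bar g}h\,dv_{\bar g}$. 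The key identity needed to recognize the operator $\mathscr L_{\bar g}$ is $\int_M Q_{\bar g}'\,(tr_{\bar g}h)\,dv_{\bar g} = \int_M \langle\Gamma_{\bar g}h,tr_{\bar g}h\rangle\,dv_{\bar g} = \int_M \langle h,\Gamma_{\bar g}^*(tr_{\bar g}h)\rangle\,dv_{\bar g}$, whose trace part produces $tr_{\bar g}\Gamma_{\bar g}^*(tr_{\bar g}h) = \mathscr L_{\bar g}(tr_{\bar g}h)$; the definition $P_{\bar g} = \Delta_{\bar g}^2 - \operatorname{div}(\cdots) + \frac{n-4}{2}Q_{\bar g}$ together with the adjoint formula for $\Gamma_{\bar g}^*$ gives $\mathscr L_{\bar g}u = \frac12(P_{\bar g} - \frac{n+4}{2}Q_{\bar g})u$, which one verifies by tracing the displayed formula for $\Gamma_{\bar g}^*f$ and collecting the $A_n,B_n,C_n$ coefficients.

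The main obstacle I anticipate is the bookkeeping in converting the "raw" second-variation sum into the stated form — specifically, isolating the full-trace contributions and showing that all the $\overline{tr_{\bar g}h}$-dependence organizes into $(tr_{\bar g}h - \overline{tr_{\bar g}h})\mathscr L_{\bar g}(tr_{\bar g}h - \overline{tr_{\bar g}h})$ rather than into $tr_{\bar g}h$ alone. This works because $\mathscr L_{\bar g}$ annihilates constants when acting appropriately under the integral (equivalently, $\int_M \mathscr L_{\bar g}u\,dv_{\bar g} = \int_M tr_{\bar g}\Gamma_{\bar g}^*u\,dv_{\bar g} = \int_M \langle\Gamma_{\bar g}\bar g, u\rangle\,dv_{\bar g}$, and $\Gamma_{\bar g}\bar g$ pairs with constants to give a multiple of $\int_M Q_{\bar g}'$ along the conformal direction, which is controlled), so subtracting the average of $tr_{\bar g}h$ changes the quadratic form only by terms already present. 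I would verify this last cancellation carefully, since it is where the scaling invariance $\mathcal F_{M,\bar g}(c^2g) = \mathcal F_{M,\bar g}(g)$ is effectively being used: the scaling direction $h = \bar g$ has $tr_{\bar g}h - \overline{tr_{\bar g}h} = 0$ and $\mathring h = 0$, so the second variation must vanish on it, which is a useful consistency check on the final formula. The verification that $\mathscr L_{\bar g} = \frac12(P_{\bar g} - \frac{n+4}{2}Q_{\bar g})$ is a finite but routine coefficient computation that I would relegate to a short lemma or direct citation of the Paneitz operator's definition.
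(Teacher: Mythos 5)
Your proposal is correct and follows essentially the same route as the paper's proof: differentiate $\mathcal{F}_{M,\bar g}$ twice, substitute the compact formula for $\int_M Q_{\bar g}''\,dv_{\bar g}$ from Proposition \ref{prop:second_variations_integral_Q}, split $\langle \mathring J_{\bar g}',\mathring h\rangle$ via the adjoint into the $TT$ and trace pieces, identify $\mathscr{L}_{\bar g}=tr_{\bar g}\Gamma_{\bar g}^*$ from $\int_M Q_{\bar g}'(tr_{\bar g}h)\,dv_{\bar g}$, and absorb the $\overline{tr_{\bar g}h}$-dependence using the divergence part $\mathring P_{\bar g}$ of the Paneitz operator, exactly as the paper does. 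The scaling-direction consistency check you mention is a sensible sanity check but is not needed as a separate ingredient.
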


\begin{proof}
	Applying variational formulae of volume in Section \ref{sec:basic_var_form}, we have
	\begin{align*}
	&(D^2 \mathcal{F}_{M,\bar g}) \cdot (h, h) \\
	=& \left(\Vol_M(g)^{\frac{4}{n}} \int_M  Q_g dv_{\bar g} \right)''\\
	=& \left( (\Vol_M(g)^{\frac{4}{n}})' \int_M  Q_g dv_{\bar g} + \Vol_M(g)^{\frac{4}{n}} \int_M  Q_g' dv_{\bar g}  \right)'\\
	=& \Vol_M(\bar g)^{\frac{4}{n}} \int_M Q_{\bar g}'' dv_{\bar g} + \frac{8}{n}\Vol_M(\bar g)^{\frac{4}{n} - 1} \Vol_{M,\bar g}' \int_M  Q_{\bar g}' dv_{\bar g} +  \frac{4}{n}\Vol_M(\bar g)^{\frac{4}{n} - 1} \Vol_{M,\bar g}'' \int_M Q_{\bar g} dv_{\bar g} \\
	&- \frac{4(n - 4)}{n^2}\Vol_M(\bar g)^{\frac{4}{n} - 2}  ( \Vol_{M, \bar g}')^2 \int_M Q_{\bar g} dv_{\bar g}\\
	=& \Vol_M(\bar g)^{\frac{4}{n}} \int_M Q_{\bar g}'' dv_{\bar g} - \frac{8}{n}\Vol_M(\bar g)^{\frac{4-n}{n}} \int_M (tr_{\bar g} h) dv_{\bar g} \int_M \langle J_{\bar g}, h \rangle_{\bar g} dv_{\bar g}\\
	&+  \frac{1}{n}\Vol_M(\bar g)^{\frac{4}{n}} Q_{\bar g} \left[  \int_M \left( \frac{n-2}{n}(tr_{\bar g} h)^2 - 2|\mathring h|_{\bar g}^2 \right)dv_{\bar g} - \frac{n - 4}{n} \Vol_M(\bar g)^{-1} \left(\int_M (tr_{\bar g} h) dv_{\bar g}\right)^2  \right],
	\end{align*}
	where we used the fact $Q_{\bar g}$ is a constant, when $\bar g$ is $J$-Einstein. 
	
	Now applying Lemma \ref{prop:second_variations_integral_Q}, it can be simplified as
	\begin{align*}
	&\Vol_M(\bar g)^{-\frac{4}{n}} \left((D^2 \mathcal{F}_{M,\bar g}) \cdot (h, h)\right) \\
	=& -2  \int_M \langle \mathring{J}_{\bar g}', \mathring{h} \rangle_{\bar g} dv_{\bar g}  - \frac{n+4}{2n} \int_M  Q_{\bar g}' (tr_{\bar g} h) dv_{\bar g} - \frac{n+ 4}{n^2}Q_{\bar g} \Vol_M(\bar g)^{ - 1} \left(\int_M (tr_{\bar g} h) dv_{\bar g} \right)^2.
	\end{align*}	
	Rewriting its first term as
	\begin{align*}
		-2  \int_M \langle \mathring{J}_{\bar g}', \mathring{h} \rangle_{\bar g} dv_{\bar g} =& -2  \int_M \left \langle (D \mathring{J}_{\bar g}) \cdot \left(\mathring h + \frac{1}{n} (tr_{\bar g} h) \bar g \right), \mathring{h} \right\rangle_{\bar g} dv_{\bar g} \\
		=& -2  \int_M \langle \mathring h , (D \mathring{J}_{\bar g}) \cdot \mathring{h} \rangle_{\bar g} dv_{\bar g} - \frac{2}{n}  \int_M  [tr_{\bar g} ( (D \mathring{J}_{\bar g})^{*} \cdot \mathring{h})](tr_{\bar g} h) dv_{\bar g}, 
	\end{align*}
	we obtain
		\begin{align*}
		&\Vol_M(\bar g)^{-\frac{4}{n}} \left((D^2 \mathcal{F}_{M,\bar g}) \cdot (h, h)\right) \\
		=&-2  \int_M \langle \mathring h , (D \mathring{J}_{\bar g}) \cdot \mathring{h} \rangle_{\bar g} dv_{\bar g}   - \frac{2}{n}  \int_M  [tr_{\bar g} ( (D \mathring{J}_{\bar g})^{*} \cdot \mathring{h})](tr_{\bar g} h) dv_{\bar g} - \frac{n+4}{2n} \int_M \left[  Q_{\bar g}' (tr_{\bar g} h) \right]dv_{\bar g} \\
&- \frac{n+ 4}{n^2}Q_{\bar g} \Vol_M(\bar g)^{ - 1} \left(\int_M (tr_{\bar g} h) dv_{\bar g} \right)^2\\
		=&-2  \int_M \langle \mathring h , (D \mathring{J}_{\bar g}) \cdot \mathring{h} \rangle_{\bar g} dv_{\bar g}   - \frac{2}{n}  \int_M  [tr_{\bar g} ( (D \mathring{J}_{\bar g})^{*} \cdot \mathring{h})](tr_{\bar g} h) dv_{\bar g} - \frac{n+4}{2n} \int_M \left[ (tr_{\bar g} h) (\Gamma_{\bar g} \mathring h) \right] dv_{\bar g} \\
&- \frac{n+4}{2n^2} \int_M   ( tr_{\bar g} h) [tr_{\bar g}\Gamma_{\bar g}^* (tr_{\bar g} h)] dv_{\bar g} - \frac{n+ 4}{n^2}Q_{\bar g} \Vol_M(\bar g)^{ - 1} \left(\int_M (tr_{\bar g} h) dv_{\bar g} \right)^2.
		\end{align*}
	Denote 
$$\mathring P_{\bar g} := P_{\bar g} - \frac{n-4}{2} Q_{\bar g}$$ to be the divergence part of Paneitz operator. Clearly the space of constants are contained in $\ker \mathring P_{\bar g}$. This implies 

	\begin{align*}
		&- \frac{n+4}{2n^2} \int_M   ( tr_{\bar g} h) [tr_{\bar g}\Gamma_{\bar g}^* (tr_{\bar g} h)] dv_{\bar g}  - \frac{n+ 4}{n^2}Q_{\bar g} \Vol_M(\bar g)^{ - 1} \left(\int_M (tr_{\bar g} h) dv_{\bar g} \right)^2\\
		=& - \frac{n+4}{4n^2} \int_M   ( tr_{\bar g} h) \left[ \left( \mathring P_{\bar g} - 4 Q_{\bar g} \right) (tr_{\bar g} h) \right] dv_{\bar g}  - \frac{n+ 4}{n^2}Q_{\bar g} \Vol_M(\bar g)^{ - 1} \left(\int_M (tr_{\bar g} h) dv_{\bar g} \right)^2\\
		=& - \frac{n+4}{4n^2} \int_M   ( tr_{\bar g} h) \left(  \mathring P_{\bar g} (tr_{\bar g} h) \right) dv_{\bar g} + \frac{n+ 4}{n^2}Q_{\bar g} \int_M (tr_{\bar g} h - \overline{tr_{\bar g} h})^2 dv_{\bar g} \\
		=& - \frac{n+4}{4n^2} \int_M  \left[ ( tr_{\bar g} h - \overline{tr_{\bar g} h}) \left(  \mathring P_{\bar g} (tr_{\bar g} h - \overline{tr_{\bar g} h}) \right) - 4 Q_{\bar g} (tr_{\bar g} h - \overline{tr_{\bar g} h})^2\right] dv_{\bar g} \\
		=& - \frac{n+4}{2n^2} \int_M  \left[ ( tr_{\bar g} h - \overline{tr_{\bar g} h})   \mathscr{L}_{\bar g} (tr_{\bar g} h - \overline{tr_{\bar g} h}) \right] dv_{\bar g} .
	\end{align*}
	Therefore,	
\begin{align*}
&\Vol_M(\bar g)^{-\frac{4}{n}} \left((D^2 \mathcal{F}_{M,\bar g}) \cdot (h, h)\right) \\
=&-2  \int_M  \left[ \langle \mathring h , (D \mathring{J}_{\bar g}) \cdot \mathring{h} \rangle_{\bar g} + \frac{1}{n} [tr_{\bar g} ( (D \mathring{J}_{\bar g})^{*} \cdot \mathring{h})](tr_{\bar g} h) + \frac{n+4}{4n} (tr_{\bar g} h)(\Gamma_{\bar g} \mathring h) \right]  dv_{\bar g} \\
&- \frac{n+4}{2n^2} \int_M  \left[ ( tr_{\bar g} h - \overline{tr_{\bar g} h})   \mathscr{L}_{\bar g} (tr_{\bar g} h - \overline{tr_{\bar g} h}) \right] dv_{\bar g}.\\
=&-2  \int_M  \left[ \langle \mathring h , (D \mathring{J}_{\bar g}) \cdot \mathring{h} \rangle_{\bar g} + \frac{1}{n} \left( tr_{\bar g} ( (D \mathring{J}_{\bar g})^{*} \cdot \mathring{h}) + \frac{n+4}{4} (\Gamma_{\bar g} \mathring h)\right) (tr_{\bar g} h) \right]  dv_{\bar g} \\
&- \frac{n+4}{2n^2} \int_M  \left[ ( tr_{\bar g} h - \overline{tr_{\bar g} h})   \mathscr{L}_{\bar g} (tr_{\bar g} h - \overline{tr_{\bar g} h}) \right] dv_{\bar g}.
\end{align*}
\end{proof}
\vskip.1in

\begin{remark}
In general, for a $J$-Einstein metric $\bar g$, the term 
$$\int_M [tr_{\bar g} ( (D \mathring{J}_{\bar g})^{*} \cdot \mathring{h})](tr_{\bar g} h) dv_{\bar g}$$ in the above proposition may not necessarily vanish unless the dimension $n=4$. But if $\bar g$ is an Einstein metric, this term would be zero due to the operator $D\mathring{J}_{\bar g}$ is formally self-adjoint and Lemma \ref{lem:tr_circ_J'}. This case is addressed in Corollary \ref{cor:secvarEin}.
\end{remark}
\ 

In order to remove the crossing term in the previous proposition and obtain an elegant variational formula for Einstein metrics, we first rewrite the operator $\Gamma_g$ in a nice way.  
\begin{proposition} 
	Suppose $g$ is an arbitrary Riemannian metric and 
	\begin{align*}
		h = \mathring h + \frac{1}{n}(tr_g h) g \in  S^{_{TT}}_{2,g}(M) \oplus (C^\infty(M) \cdot g),
	\end{align*}
	then
	\begin{align*}
		\Gamma_g h =& div_g [U_g(\mathring h)] - 2 \mathring J_g \cdot \mathring h + \frac{1}{n} \mathscr{L}_g (tr_g h) ,
	\end{align*}
	where 
		\begin{align*}
		U_g (\mathring h) := \frac{1}{2(n-1)(n-2)} \left[ n^2   \nabla_{g} (\mathring S_{g} \cdot \mathring h)  - 8 (n - 1 )\mathring h_{ij} \nabla_g \mathring S_g^{ij} +(n^2 + 4n - 8 ) \mathring h ( \nabla_g (tr_{g} S_{g})) \right].
		\end{align*}
\end{proposition}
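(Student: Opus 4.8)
The plan is to use the linearity of $\Gamma_g$ to split $h=\mathring h+\tfrac1n(tr_g h)g$ into its transverse--traceless part $\mathring h\in S^{_{TT}}_{2,g}(M)$ and its conformal part $\tfrac1n(tr_g h)g$, and to handle the two contributions separately: the first will produce $div_g[U_g(\mathring h)]-2\mathring J_g\cdot\mathring h$ and the second $\tfrac1n\mathscr{L}_g(tr_g h)$.

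For the conformal part I would simply linearize the conformal transformation law of $Q$-curvature. Set $u:=\tfrac1n tr_g h$ and take the curve $g_t=e^{2tu}g$, so $\dot g_0=2ug$. When $n=4$, differentiating $Q_{e^{2tu}g}=e^{-4tu}\bigl(tP_gu+Q_g\bigr)$ at $t=0$ gives $\Gamma_g(2ug)=P_gu-4Q_gu$; when $n\neq4$, writing $g_t=v_t^{4/(n-4)}g$ with $v_t=e^{(n-4)tu/2}$ and using $Q_{v^{4/(n-4)}g}=\tfrac{2}{n-4}v^{-\frac{n+4}{n-4}}P_gv$ together with $P_g(1)=\tfrac{n-4}{2}Q_g$ gives $\Gamma_g(2ug)=P_gu-\tfrac{n+4}{2}Q_gu$. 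In either case this equals $2\mathscr{L}_g u$, so $\Gamma_g\bigl(\tfrac1n(tr_g h)g\bigr)=\tfrac1n\mathscr{L}_g(tr_g h)$, the last term of the statement.

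The heart of the proof is the pointwise identity $\Gamma_g\mathring h=div_g[U_g(\mathring h)]-2\mathring J_g\cdot\mathring h$ for $\mathring h\in S^{_{TT}}_{2,g}(M)$. I would substitute $tr_g\mathring h=0$ and $\delta_g\mathring h=0$ into the formula for $\Gamma_g$ of Section~\ref{sec:basic_var_form}, which leaves only $-A_n\Delta_g(Ric_g\cdot\mathring h)-A_n\nabla^2_gR_g\cdot\mathring h-B_nRic_g\cdot\Delta_E^g\mathring h-2C_nR_gRic_g\cdot\mathring h$, then rewrite $Ric_g=(n-2)\mathring S_g+\tfrac{R_g}{n}g$ and $R_g=2(n-1)tr_gS_g$, so every contraction with $Ric_g$ or $\nabla^2_gR_g$ against the trace-free $\mathring h$ reduces to one with $(n-2)\mathring S_g$ or $2(n-1)\nabla^2_g(tr_gS_g)$. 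Next I would integrate by parts to move all derivatives off $\mathring h$: since $\delta_g\mathring h=0$ one has $\nabla^2_g(tr_gS_g)\cdot\mathring h=div_g(\mathring h(\nabla_g(tr_gS_g)))$, and two integrations by parts turn $Ric_g\cdot\Delta_g\mathring h$ into $(n-2)\Delta_g(\mathring S_g\cdot\mathring h)-2(n-2)\,div_g(\mathring h_{ij}\nabla_g\mathring S_g^{ij})+(n-2)(\Delta_g\mathring S_g)\cdot\mathring h$. The structural point is that, after replacing $\Delta_g\mathring S_g$ by $\Delta_E^g\mathring S_g-2Rm_g\cdot\mathring S_g$ and using the Riemann symmetries to write $Ric_g\cdot(Rm_g\cdot\mathring h)=(n-2)(Rm_g\cdot\mathring S_g)\cdot\mathring h+\tfrac{n-2}{n}R_g\,\mathring S_g\cdot\mathring h$, the full-Riemann contractions $(Rm_g\cdot\mathring S_g)\cdot\mathring h$ cancel between the Laplacian and the curvature parts of $Ric_g\cdot\Delta_E^g\mathring h$; collecting the $\Delta_g(\mathring S_g\cdot\mathring h)$-coefficients via $(n-2)^2+4(n-1)=n^2$, one is left with $\tfrac{n^2}{2(n-1)(n-2)}\Delta_g(\mathring S_g\cdot\mathring h)-\tfrac{4}{n-2}div_g(\mathring h_{ij}\nabla_g\mathring S_g^{ij})+\nabla^2_g(tr_gS_g)\cdot\mathring h+\tfrac{2}{n-2}(\Delta_E^g\mathring S_g)\cdot\mathring h+c_nR_g\,\mathring S_g\cdot\mathring h$ for an explicit constant $c_n$.

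Finally I would match this against $-2\mathring J_g\cdot\mathring h$. Contracting the explicit expressions for $B_g$ and $T_g$ with $\mathring h$ (all pure-trace terms, and in fact the $\mathring S_g^2\cdot\mathring h$ terms, cancelling) yields $-2\mathring J_g\cdot\mathring h=\tfrac{2}{n-2}(\Delta_E^g\mathring S_g)\cdot\mathring h+\tfrac{n^2-10n+12}{2(n-1)(n-2)}\nabla^2_g(tr_gS_g)\cdot\mathring h-\tfrac{(n-2)^2(n+2)}{2n(n-1)}(tr_gS_g)\mathring S_g\cdot\mathring h$; the $\Delta_E^g\mathring S_g$-terms then agree automatically, the $R_g\,\mathring S_g\cdot\mathring h$ term agrees with the $(tr_gS_g)\mathring S_g\cdot\mathring h$ term once one checks the polynomial identity $n\bigl(n^2(n-4)+16(n-1)\bigr)-16(n-1)^2=(n-2)^3(n+2)$, and the surviving divergences together with the residual fraction $1-\tfrac{n^2-10n+12}{2(n-1)(n-2)}=\tfrac{n^2+4n-8}{2(n-1)(n-2)}$ of $\nabla^2_g(tr_gS_g)\cdot\mathring h$ that must be absorbed into the divergence assemble exactly into $div_g[U_g(\mathring h)]$ with $U_g$ as stated. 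The only real difficulty is the bookkeeping of the constants $A_n,B_n,C_n$ and deciding which divergence pieces remain inside $-2\mathring J_g\cdot\mathring h$ and which are collected into $U_g$; the one genuinely nontrivial structural observation is the cancellation of the $Rm_g\cdot\mathring S_g$ terms, which is what makes the answer depend only on $\mathring S_g$ rather than on the full Riemann curvature.
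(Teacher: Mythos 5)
Your proof is correct, and for the heart of the statement -- the transverse--traceless part -- it follows essentially the same route as the paper: rewrite $\Gamma_g\mathring h$ and $\mathring J_g$ in terms of the Schouten tensor, integrate by parts to turn $\mathring S_g\cdot\Delta_g\mathring h-(\Delta_g\mathring S_g)\cdot\mathring h$ into a divergence, use $\delta_g\mathring h=0$ to absorb $\nabla^2_g(tr_gS_g)\cdot\mathring h$ into $div_g(\mathring h(\nabla_g(tr_gS_g)))$, and observe the cancellation of the $Rm_g\cdot\mathring S_g$ and $(tr_gS_g)\mathring S_g\cdot\mathring h$ terms together with the coefficient identities $(n-2)^2+4(n-1)=n^2$ and $2(n-1)(n-2)-(n^2-10n+12)=n^2+4n-8$, all of which check out. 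The one place you genuinely diverge is the conformal part: you linearize the conformal transformation law $Q_{e^{2tu}g}$ (using $P_g(1)=\tfrac{n-4}{2}Q_g$ for $n\neq 4$) to get $\Gamma_g(ug)=\tfrac12\bigl(P_g-\tfrac{n+4}{2}Q_g\bigr)u=\mathscr{L}_gu$ directly, whereas the paper pairs $\Gamma_g\bigl(\tfrac1n(tr_gh)g\bigr)$ against an arbitrary test function, uses $\langle\Gamma_g^*u,\,ug\rangle$-duality and the definition $\mathscr{L}_g=tr_g\Gamma_g^*$ together with the formal self-adjointness of $\mathscr{L}_g$. Both are two-line arguments; yours makes the appearance of the Paneitz operator conceptually transparent via conformal covariance, while the paper's duality argument avoids invoking the transformation law and keeps everything inside the $\Gamma_g$, $\Gamma_g^*$ formalism already set up in Section~\ref{sec:basic_var_form}.
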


\begin{proof}
	By the relation between Ricci and Schouten tensor $$Ric_{g}=(n-2)S_g + (tr_g S_g)g,$$ we can express $\Gamma_g \mathring h$ in terms of Schouten tensor
	\begin{align*}
		\Gamma_g \mathring h 
		=& \frac{n-2}{2(n-1)}\Delta_g(\mathring S_g  \cdot \mathring h ) + \nabla^2 (tr_g S_g)\cdot \mathring h +\frac{2}{n-2} \mathring S_g \cdot \Delta_g \mathring h +\frac{4}{n-2} \mathring S_g \cdot ( Rm_g \cdot \mathring h)\\
		&  - \frac{(n-2)^2(n+2)}{2n(n-1)} (tr_g S_g) (\mathring S_g  \cdot \mathring h),
	\end{align*} 
	which has simpler coefficients instead.
	Recall the traceless part of $J$-tensor is given by
	\begin{align*}
		\mathring J_g =&- \frac{1}{n-2} \left( B_g + \frac{n-4}{4(n-1)} T_g \right)\\
	=&-\frac{1}{n-2}\left( \Delta_{g} \mathring S_g + \frac{n^2-10n+12}{4(n-1)}\left( \nabla^2_g (tr_g S_g) - \frac{1}{n}g\Delta_{g}(tr_{g}S_{g}) \right) + 2 Rm_g \cdot \mathring S_g \right) \\
	&+ \frac{(n-2)^2(n+2)}{4n(n-1)}(tr_{g}S_{g})\mathring S_{g} + \frac{2}{n}|\mathring S_g|^2_g g .
	\end{align*}		
	Now we have
	\begin{align*}
		&\Gamma_g \mathring h + 2 \mathring J_g \cdot \mathring h\\
		=& \frac{n-2}{2(n-1)}\Delta_g(\mathring S_g  \cdot \mathring h ) + \frac{n^2 + 4n - 8  }{2(n-1)(n-2)}\nabla^2 (tr_g S_g)\cdot \mathring h +\frac{2}{n-2} \left[ \mathring S_g \cdot \Delta_g \mathring h -  ( \Delta_g \mathring S_g ) \cdot \mathring h \right]\\
		=& \frac{n-2}{2(n-1)}div_g \left[\nabla_g(\mathring S_g  \cdot \mathring h )  + \frac{n^2 + 4n - 8 }{(n-2)^2} \mathring h ( \nabla_g (tr_g S_g))  +\frac{4(n-1)}{(n-2)^2} \left( \mathring S_g^{ij} \nabla_g \mathring h_{ij} - \mathring h_{ij} \nabla_g \mathring S_g^{ij}   \right) \right]\\	
		=& \frac{1}{2(n-1)(n-2)} div_g \left[  n^2 \nabla_g ( \mathring S_g \cdot \mathring h )  - 8(n -1 )\mathring h_{ij} \nabla_g \mathring S_g^{ij} +(n^2 + 4n - 8 ) \mathring h ( \nabla_g (tr_g S_g)) \right]\\
		=& div_g [U_g (\mathring h)],
	\end{align*}
	where
	\begin{align*}
		U_g (\mathring h) = \frac{1}{2(n-1)(n-2)} \left[ n^2 \nabla_g ( \mathring S_g \cdot \mathring h )  - 8(n -1 )\mathring h_{ij} \nabla_g \mathring S_g^{ij} +(n^2 + 4n - 8 ) \mathring h ( \nabla_g (tr_g S_g)) \right]
	\end{align*}
	That is, 
	\begin{align*}
		\Gamma_g \mathring h = div_g[U_g (\mathring h)] - 2 \mathring J_g \cdot \mathring h.
	\end{align*}

	As for the trace part, we have
	\begin{align*}
		\int_M \left[ u \ \Gamma_g \left( \frac{1}{n} (tr_g h)g \right) \right] dv_g =& \int_M \left \langle \Gamma_g^* u, \  \frac{1}{n} (tr_g h)g \right \rangle_g  dv_g = \frac{1}{n} \int_M \left[ (tr_g h) (tr_g \Gamma_g^* u) \right]   dv_g
	\end{align*}
	for any $u \in C^\infty(M)$. Recall that
	\begin{align*}
		\mathscr{L}_{g} = tr_g \Gamma_g^*
	\end{align*}
	as defined in Proposition \ref{svarF} and $\mathscr{L}_{g}$ is formally self-adjoint, then
	\begin{align*}
				\int_M \left[ u \ \Gamma_g \left( \frac{1}{n} (tr_g h)g \right) \right] dv_g = \frac{1}{n} \int_M \left[ (tr_g h) (\mathscr{L}_g u) \right] dv_g = \frac{1}{n} \int_M \left[ u\ \mathscr{L}_g (tr_g h) \right] dv_g.
	\end{align*}
	Since $u$ is arbitrary, we conclude that
		
	\begin{align*}
		\Gamma_g \left( \frac{1}{n} (tr_g h)g \right) = \frac{1}{n} \mathscr{L}_g (tr_g h).
	\end{align*}
	
	Combining these two parts, we obtain
	\begin{align*}
		\Gamma_g h 
		=& div_g [U_g(\mathring h)] - 2 \mathring J_g \cdot \mathring h + \frac{1}{n} \mathscr{L}_g (tr_g h).
	\end{align*}	
\end{proof}

\vskip .2in

In particular, when $\bar g$ is restricted to be an Einstein metric, the second variation of $\mathcal{F}_{M,\bar g}$ can be expressed in an elegant way:
\begin{corollary}\label{cor:secvarEin}
	Suppose $(M^{n}, \bar g)$ is an Einstein manifold, then 
	\begin{align*}
		&(D^2 \mathcal{F}_{M,\bar g}) \cdot (h, h) \\
		=&-2 \Vol_M(\bar g)^{\frac{4}{n}}  \left[ \int_M \langle \mathring h , (D \mathring{J}_{\bar g}) \cdot \mathring{h} \rangle_{\bar g} dv_{\bar g} + \frac{n+4}{4n^2} \int_M  \left[ ( tr_{\bar g} h - \overline{tr_{\bar g} h})   \mathscr{L}_{\bar{g}} (tr_{\bar g} h - \overline{tr_{\bar g} h}) \right] dv_{\bar g} \right],
	\end{align*}	
for any $h = \mathring h + \frac{1}{n}(tr_{\bar g} h) \bar{g} \in S^{_{TT}}_{2,\bar g}(M) \oplus (C^\infty(M)\cdot \bar{g})$.
\end{corollary}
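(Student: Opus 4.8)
The plan is to specialize Proposition~\ref{svarF} to an Einstein background---an Einstein metric is in particular $J$-Einstein, so that proposition applies---and to show that the two ``crossing'' terms on its right-hand side, namely those paired against $tr_{\bar g}h$, vanish identically when $\bar g$ is Einstein. Writing $h=\mathring h+\frac{1}{n}(tr_{\bar g}h)\bar g$ with $\mathring h\in S_{2,\bar g}^{_{TT}}(M)$, Proposition~\ref{svarF} reads
\begin{align*}
&\Vol_M(\bar g)^{-\frac{4}{n}}\left((D^2\mathcal F_{M,\bar g})\cdot(h,h)\right)\\
=&-2\int_M\left[\langle\mathring h,(D\mathring J_{\bar g})\cdot\mathring h\rangle_{\bar g}+\frac{1}{n}\left(tr_{\bar g}((D\mathring J_{\bar g})^{*}\cdot\mathring h)+\frac{n+4}{4}(\Gamma_{\bar g}\mathring h)\right)(tr_{\bar g}h)\right]dv_{\bar g}\\
&-\frac{n+4}{2n^2}\int_M(tr_{\bar g}h-\overline{tr_{\bar g}h})\,\mathscr{L}_{\bar g}(tr_{\bar g}h-\overline{tr_{\bar g}h})\,dv_{\bar g},
\end{align*}
so it suffices to prove, for every $\mathring h\in S_{2,\bar g}^{_{TT}}(M)$, the two identities $\Gamma_{\bar g}\mathring h=0$ and $tr_{\bar g}((D\mathring J_{\bar g})^{*}\cdot\mathring h)=0$; substituting these and multiplying through by $\Vol_M(\bar g)^{4/n}$ produces exactly the asserted formula.

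For the vanishing of $\Gamma_{\bar g}\mathring h$, I would invoke the decomposition $\Gamma_g h=div_g[U_g(\mathring h)]-2\mathring J_g\cdot\mathring h+\frac{1}{n}\mathscr{L}_g(tr_g h)$ proved in the Proposition immediately preceding this Corollary. On an Einstein manifold the traceless Schouten tensor $\mathring S_{\bar g}$ vanishes and $tr_{\bar g}S_{\bar g}=R_{\bar g}/(2(n-1))$ is constant, hence each of the three terms making up $U_{\bar g}(\mathring h)$---which involve only $\mathring S_{\bar g}\cdot\mathring h$, $\nabla_{\bar g}\mathring S_{\bar g}$ and $\nabla_{\bar g}(tr_{\bar g}S_{\bar g})$---is zero; moreover $\bar g$, being Einstein, is $J$-Einstein so $\mathring J_{\bar g}=0$; and $tr_{\bar g}\mathring h=0$. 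Therefore $\Gamma_{\bar g}\mathring h=0$. (Equivalently, this can be read directly off the first-variation formula for $Q$-curvature in Section~\ref{sec:basic_var_form}, since at an Einstein metric $dR_{\bar g}=0$, $\nabla^2_{\bar g}R_{\bar g}=0$, and $tr_{\bar g}\mathring h=\delta_{\bar g}\mathring h=0$ annihilate every summand.)

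For the vanishing of $tr_{\bar g}((D\mathring J_{\bar g})^{*}\cdot\mathring h)$, I would argue as sketched in the Remark after Proposition~\ref{svarF}: since $\bar g$ is Einstein, $D\mathring J_{\bar g}$ is formally self-adjoint (Corollary~\ref{cor:stable_Einstein_DJ_positive}) and, by Proposition~\ref{prop:second_var_mathring_J}, maps $S_{2,\bar g}^{_{TT}}(M)$ into itself, so $(D\mathring J_{\bar g})^{*}\cdot\mathring h=(D\mathring J_{\bar g})\cdot\mathring h$; then Lemma~\ref{lem:tr_circ_J'}, applied with variation direction $\mathring h$, gives $tr_{\bar g}((D\mathring J_{\bar g})\cdot\mathring h)=\langle\mathring J_{\bar g},\mathring h\rangle_{\bar g}=0$ because $\mathring J_{\bar g}=0$. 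I do not anticipate a genuine obstacle in this argument; the only delicate point is keeping the orthogonal splitting $h=\mathring h+\frac{1}{n}(tr_{\bar g}h)\bar g$ and the distinction between $D\mathring J_{\bar g}$ and its adjoint straight, which is precisely what formal self-adjointness together with Lemma~\ref{lem:tr_circ_J'} resolves.
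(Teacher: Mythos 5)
Your proposal is correct and follows essentially the same route as the paper: the paper's proof of Corollary \ref{cor:secvarEin} likewise specializes Proposition \ref{svarF} and kills the two crossing terms by citing $U_{\bar g}(\mathring h)=0$ and $\mathring J_{\bar g}=0$ (hence $\Gamma_{\bar g}\mathring h=0$) together with the formal self-adjointness of $D\mathring J_{\bar g}$ and Lemma \ref{lem:tr_circ_J'} (hence $tr_{\bar g}((D\mathring J_{\bar g})^{*}\cdot\mathring h)=0$). Your write-up merely makes these citations explicit, including the valid alternative check of $\Gamma_{\bar g}\mathring h=0$ directly from the linearization formula.
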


\begin{proof} 
	This result follows from Corollary \ref{cor:stable_Einstein_DJ_positive}, Lemma \ref{lem:tr_circ_J'}, and the fact that $U_{\bar g} (\mathring h) = 0$ and $\mathring J_{\bar g} = 0$ when $\bar g$ is an Einstein metric.
\end{proof}

\ \\

\section{Volume comparison with respect to $Q$-curvature} \label{vol_comp_Q}\label{volcom}

In this section, we give the proof of our main results. As the first step, we recall some fundamental results involved (c.f. \cite{Bes87, Via13}):

\begin{lemma}[Lichnerowicz-Obata's eigenvalue estimate]\label{lem:L-O}
Suppose $(M^{n},\bar{g})$ is an $n$-dimensional closed Riemannian manifold with 
$$Ric_{\bar g} \geq (n-1)\lambda \bar{g},$$
where $\lambda>0$ is a constant. Then for any function $u\in C^{\infty}(M)\backslash \{0\}$ with
$$\int_{M} u dv_{\bar{g}} =0,$$
we have 
$$ \int_{M} |du|^2 dv_{\bar{g}} \geq n\lambda \int_{M} u^2 dv_{\bar{g}},$$ where equality holds if and only if $(M^{n}, \bar{g})$ is isometric to the round sphere $\mathbb{S}^{n}(r)$ with radius
$r=\frac{1}{\sqrt{\lambda}}$.
\end{lemma}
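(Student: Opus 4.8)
The plan is to reduce the stated inequality to the classical Lichnerowicz lower bound for the first nonzero eigenvalue of the Laplacian, and to treat the rigidity statement via Obata's theorem. First I would recall the variational characterization
\[
\lambda_1(-\Delta_{\bar g}) = \inf\left\{ \frac{\int_M |du|^2_{\bar g}\, dv_{\bar g}}{\int_M u^2\, dv_{\bar g}} \ :\ u \in C^\infty(M)\setminus\{0\},\ \int_M u\, dv_{\bar g} = 0 \right\},
\]
so that the assertion of the lemma is exactly equivalent to $\lambda_1(-\Delta_{\bar g}) \geq n\lambda$; it therefore suffices to prove this bound, which I would obtain by the Bochner technique applied to a first eigenfunction.

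Concretely, let $u$ solve $-\Delta_{\bar g} u = \lambda_1 u$ with $\int_M u\, dv_{\bar g}=0$, so that $u$ is nonconstant and $\lambda_1>0$. Applying the Bochner formula
\[
\tfrac{1}{2}\Delta_{\bar g}|\nabla u|^2 = |\nabla^2 u|^2 + \langle \nabla u, \nabla \Delta_{\bar g} u\rangle_{\bar g} + Ric_{\bar g}(\nabla u, \nabla u)
\]
and integrating over the closed manifold $M$, the divergence term vanishes and one is left with
\[
0 = \int_M |\nabla^2 u|^2\, dv_{\bar g} - \lambda_1 \int_M |\nabla u|^2\, dv_{\bar g} + \int_M Ric_{\bar g}(\nabla u, \nabla u)\, dv_{\bar g}.
\]
Into this I would feed the pointwise Cauchy-Schwarz bound $|\nabla^2 u|^2 \geq \tfrac{1}{n}(\Delta_{\bar g} u)^2 = \tfrac{1}{n}\lambda_1^2 u^2$, the identity $\int_M |\nabla u|^2\, dv_{\bar g} = \lambda_1 \int_M u^2\, dv_{\bar g}$ (integration by parts), and the curvature hypothesis $Ric_{\bar g}(\nabla u,\nabla u) \geq (n-1)\lambda|\nabla u|^2$. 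After collecting terms this reduces to $0 \geq (n-1)\lambda_1\left(\lambda - \tfrac{\lambda_1}{n}\right)\int_M u^2\, dv_{\bar g}$, which forces $\lambda_1 \geq n\lambda$, and hence the claimed estimate for every mean-zero $u$.

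For the equality case, suppose a nonconstant $u$ with $\int_M u\, dv_{\bar g}=0$ satisfies $\int_M |du|^2_{\bar g}\, dv_{\bar g} = n\lambda \int_M u^2\, dv_{\bar g}$. Then $u$ attains the infimum above, so it is a first eigenfunction with $\lambda_1 = n\lambda$, and every inequality used in the argument must be an equality pointwise. In particular equality in Cauchy-Schwarz forces the Hessian of $u$ to be pure trace, $\nabla^2 u = \tfrac{\Delta_{\bar g} u}{n}\,\bar g = -\lambda u\,\bar g$, so $u$ is a nonconstant solution of Obata's equation $\nabla^2 u + \lambda u\,\bar g = 0$ on the closed manifold $(M^n,\bar g)$. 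By Obata's theorem this forces $(M^n,\bar g)$ to be isometric to the round sphere $\mathbb{S}^n(1/\sqrt{\lambda})$; conversely this sphere realizes equality since its first nonzero eigenvalue equals $n\lambda$. The only genuinely nontrivial input is Obata's rigidity theorem, which is classical, so I would simply cite it (e.g. from \cite{Bes87}) rather than reconstruct the isometry by hand; thus there is no real obstacle here, the lemma being included only as a standard tool for the volume-comparison arguments that follow.
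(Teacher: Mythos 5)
Your proof is correct: it is the classical Lichnerowicz estimate via the Bochner formula combined with Obata's rigidity theorem for the equality case, and the algebra (reducing to $0 \geq (n-1)\lambda_1(\lambda - \lambda_1/n)\int_M u^2\, dv_{\bar g}$) checks out. The paper offers no proof of this lemma at all --- it simply recalls it as a standard fact with a citation to \cite{Bes87, Via13} --- so your reconstruction is exactly the argument being implicitly invoked, and there is nothing further to compare.
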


\vskip .2in

\begin{lemma}[Berger-Ebin's splitting lemma for Einstein manifolds] 
Suppose $(M^{n}, \bar g)$ is an $n$-dimensional closed Einstein manifold with Ricci curvature $$Ric_{\bar g}=(n-1)\lambda\bar{g},$$ then we have the direct sum decomposition 
$$S_2(M) = \text{Im}\ \mathcal{L}_{\bar g} \oplus (C^\infty (M) \cdot \bar{g})  \oplus S_{2,\bar g}^{_{TT}}(M)$$
unless $(M^{n}, \bar g)$ is isometric to the round sphere $\mathbb{S}^{n}(r)$ up to a scaling, where $\mathcal{L}_{\bar g}$ is the Lie derivative.
For the round sphere $\mathbb{S}^{n}(r)$ with radius $r=\frac{1}{\sqrt{\lambda}}$, we have 
$$S_2(M) = \text{Im}\ \mathcal{L}_{\bar g} \oplus (E^{\perp}_{n\lambda} \cdot \bar{g})  \oplus S_{2,\bar g}^{_{TT}}(M),$$
where $$E_{n\lambda}:= \{u\in C^{\infty}(\mathbb{S}^{n}(r)) \ |\ \Delta_{\mathbb{S}^{n}(r)}u + n\lambda u =0\}$$
is the space of first eigenfunctions for the spherical metric and $E^{\perp}_{n\lambda}$ is its $L^2$-orthogonal complement.
\end{lemma}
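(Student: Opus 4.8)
The plan is to obtain the decomposition in two stages: first split off $\mathrm{Im}\,\mathcal{L}_{\bar g}$ via the classical Berger--Ebin decomposition, then peel off the conformal direction $C^\infty(M)\cdot\bar g$, the only delicate point being the possible overlap of $\mathrm{Im}\,\mathcal{L}_{\bar g}$ with $C^\infty(M)\cdot\bar g$ on the round sphere. The starting point is the standard fact, valid on any closed Riemannian manifold, that $S_2(M)=\mathrm{Im}\,\mathcal{L}_{\bar g}\oplus\ker\delta_{\bar g}$ is an $L^2$-orthogonal decomposition; this follows from Fredholm theory applied to the elliptic, non-negative, formally self-adjoint Laplace-type operator $\delta_{\bar g}\mathcal{L}_{\bar g}$ on $\mathscr{X}(M)$, whose kernel is the space of Killing fields. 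Since $S_{2, \bar g}^{_{TT}}(M)\subset\ker\delta_{\bar g}$ is $L^2$-orthogonal both to $C^\infty(M)\cdot\bar g$ (pairing via the trace) and to $\mathrm{Im}\,\mathcal{L}_{\bar g}$ (pairing $\mathcal{L}_{\bar g}X$ against a $TT$-tensor moves $\delta_{\bar g}$ onto it), what remains is (i) to check that the three subspaces span $S_2(M)$, and (ii) to decide when their sum is direct.

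For step (i), I would argue directly: given $h\in S_2(M)$ with $u:=tr_{\bar g}h$, seek $X\in\mathscr{X}(M)$ and $f\in C^\infty(M)$ with $h-\mathcal{L}_{\bar g}X-f\bar g\in S_{2, \bar g}^{_{TT}}(M)$. Demanding vanishing trace forces $f=\tfrac1n\bigl(u-2\operatorname{div}_{\bar g}X\bigr)$, and substituting into the divergence-free condition collapses the problem to a single equation $\mathcal{P}_{\bar g}X=\delta_{\bar g}h+\tfrac1n du$ for $X$, where $\mathcal{P}_{\bar g}X:=\delta_{\bar g}\mathcal{L}_{\bar g}X+\tfrac2n d(\operatorname{div}_{\bar g}X)$. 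This $\mathcal{P}_{\bar g}$ is elliptic and formally self-adjoint, and the pointwise bound $(\operatorname{div}_{\bar g}X)^2\le\tfrac n4\,|\mathcal{L}_{\bar g}X|_{\bar g}^2$ shows $\langle X,\mathcal{P}_{\bar g}X\rangle_{L^2}\ge 0$ with kernel exactly the conformal Killing fields. A short computation shows that $\delta_{\bar g}h+\tfrac1n du$ is automatically $L^2$-orthogonal to every conformal Killing field, so the equation is solvable; this produces $X$, then $f$, and finally $h^{TT}:=h-\mathcal{L}_{\bar g}X-f\bar g$, establishing the spanning.

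For step (ii), suppose $\mathcal{L}_{\bar g}X+f\bar g+h^{TT}=0$. Pairing in $L^2$ with $h^{TT}$ and using the two orthogonalities noted above forces $h^{TT}=0$, hence $\mathcal{L}_{\bar g}X=-f\bar g$, i.e. $X$ is a conformal Killing field; taking the trace and integrating over the closed manifold gives $\int_M f\,dv_{\bar g}=0$ and $f=-\tfrac2n\psi$ with $\psi:=\operatorname{div}_{\bar g}X$. The key input is the classical identity (Yano--Obata) that on a closed Einstein manifold with $Ric_{\bar g}=(n-1)\lambda\bar g$ the divergence of a conformal Killing field satisfies $\Delta_{\bar g}\psi+n\lambda\psi=0$; since $\int_M\psi\,dv_{\bar g}=0$, Lemma \ref{lem:L-O} (or a direct integration by parts when $\lambda\le 0$) leaves two possibilities. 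Either $\psi\equiv 0$, whence $f\equiv 0$, $X$ is Killing, $\mathcal{L}_{\bar g}X=0$, all three terms vanish, and the sum is direct. Or $(M^n,\bar g)$ is, up to scaling, the round sphere $\mathbb{S}^n(1/\sqrt\lambda)$; there the non-Killing conformal fields are exactly the gradients $\nabla u$ with $u\in E_{n\lambda}$, for which Obata's equation gives $\mathcal{L}_{\bar g}(\nabla u)=2\nabla^2_{\bar g}u=-2\lambda u\,\bar g$, so $\mathrm{Im}\,\mathcal{L}_{\bar g}\cap\bigl(C^\infty(M)\cdot\bar g\bigr)=E_{n\lambda}\cdot\bar g$; writing $C^\infty(M)\cdot\bar g=(E_{n\lambda}\cdot\bar g)\oplus(E_{n\lambda}^\perp\cdot\bar g)$ and absorbing $E_{n\lambda}\cdot\bar g$ into $\mathrm{Im}\,\mathcal{L}_{\bar g}$ removes precisely the overlap while keeping the span, yielding $S_2(M)=\mathrm{Im}\,\mathcal{L}_{\bar g}\oplus(E_{n\lambda}^\perp\cdot\bar g)\oplus S_{2, \bar g}^{_{TT}}(M)$.

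The routine ingredients here are the Berger--Ebin splitting and the elliptic solvability in step (i); the main obstacle is the Yano--Obata identity that pins down the divergence of a conformal Killing field as a first eigenfunction on a closed Einstein manifold, together with the attendant bookkeeping in the sphere case. In practice, all of these are standard, and one may instead simply invoke \cite{Bes87} and \cite{Via13}.
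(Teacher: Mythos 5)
Your argument is correct and is the standard proof of this splitting (trace/divergence reduction to the elliptic, non-negative operator $\mathcal{P}_{\bar g}$ whose kernel is the conformal Killing fields, plus the Yano--Obata identity and Obata's rigidity to isolate the spherical overlap $E_{n\lambda}\cdot\bar g$). The paper itself gives no proof of this lemma --- it is recalled as a known result with references to \cite{Bes87, Via13} --- and your write-up matches the argument found there, so there is nothing further to reconcile.
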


With the aid of \emph{Implicit Function Theorem}, \emph{Berger-Ebin's splitting lemma} suggests that one can define a concept named \emph{local slice $\mathcal{S}_{\bar g}$}, which is very helpful in understanding the local structure of Einstein metrics in $\mathcal{M}$. Simply speaking, a local slice is a set of equivalent classes of metrics near the reference metric $\bar g$ modulo diffeomorphisms. The process of pulling back metrics on the local slice is also known as \emph{gauge fixing}. \\

The above splitting lemma does not provide an orthogonal decomposition despite being a direct sum decomposition. To overcome this issue, we need a refined decomposition which involves the splitting of vector fields as well. As a result, we obtain the following improved version of the traditional \emph{Ebin-Palais slice theorem}. The proof is very similar to the traditional one (see \cite{BM11, Via13}). It seems like such a treatment did not appear in the literature to the best of our knowledge and we hope it would benefit researches of similar topics. 
\begin{theorem}[Ebin-Palais slice theorem] \label{thm:slice_thm}
	Suppose $(M^n,\bar{g})$ is a closed $n$-dimensional Einstein manifold with Ricci curvature tensor
	\begin{align*}
	Ric_{\bar g} = (n-1) \lambda \bar g,
	\end{align*}
	where $\lambda \in \mathbb{R}$ is a constant. There exists a local slice $\mathcal{S}_{\bar{g}}$ though $\bar g$ in $\mathcal{M}$. That is, for a fixed real number $p > n$, one can find a constant $\varepsilon_1 > 0$ such that for any metric $g \in \mathcal{M}$ with $||g-\bar{g}||_{W^{2,p}(M,\bar{g})} < \varepsilon_1$, there is a diffeomorphism $\varphi\in \mathscr{D}(M)$ with $\varphi^*g \in \mathcal{S}_{\bar{g}}$. Moreover, for a smooth local slice $\mathcal{S}_{\bar{g}}$, we have the decomposition
	$$S_2(M) =T_{\bar g} \mathcal{S}_{\bar g} \oplus (T_{\bar g} \mathcal{S}_{\bar g})^{\perp},$$ where the tangent space of $\mathcal{S}_{\bar g}$ at $\bar g$ and its $L^2$-orthogonal complement are given by
	$$T_{\bar g}\mathcal{S}_{\bar g} = S_{2,\bar g}^{_{TT}}(M) \oplus (C^\infty (M) \cdot \bar g)$$
	and 
	$$(T_{\bar g}\mathcal{S}_{\bar g})^{\perp}= \left\{\mathcal{L}_{\bar g}(X)\ | \ \langle X, \nabla_{\bar g} u \rangle_{L^2(M, \bar g)} = 0, \ \forall u\in C^{\infty}(M) \right\},$$
	when $(M^n, \bar g)$ is not isometric to the round sphere $\mathbb{S}^{n}(r)$ up to a scaling;
	$$T_{\bar g}\mathcal{S}_{\bar g} =  S_{2,\bar g}^{_{TT}}(M) \oplus (E_{n\lambda}^\perp  \cdot \bar g)$$
	and 
	$$(T_{\bar g} \mathcal{S}_{\bar g})^{\perp}= \left\{\mathcal{L}_{\bar g}(X)\ | \ \langle X, \nabla_{\bar g} u \rangle_{L^2(M, \bar g)} = 0, \ \forall u\in E_{n\lambda}^\perp \right\},$$
	when $(M^n,\bar{g})$ is isometric to the round sphere $\mathbb{S}^{n}(r)$ with $r=\frac{1}{\sqrt{\lambda}}$. Here
	$$E_{n\lambda} = \{ u \in C^\infty(\mathbb{S}^n(r)): \Delta_{\mathbb{S}^n(r)} u + n\lambda u = 0 \}$$ is the space of first eigenfunctions for the spherical metric.
\end{theorem}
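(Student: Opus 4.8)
The plan is to follow the classical construction of the Ebin--Palais slice, specialized to the Einstein setting, and keep careful track of the two exceptional directions that appear for the round sphere. First I would set up the functional-analytic framework: work with the Banach manifold of $W^{2,p}$ metrics on $M$ (for a fixed $p > n$, so that $W^{2,p} \hookrightarrow C^0$ and the metrics are genuinely Riemannian), let the diffeomorphism group $\mathscr{D}^{3,p}(M)$ of $W^{3,p}$ diffeomorphisms act on it by pullback, and linearize this action at $\bar g$. The infinitesimal action is $X \mapsto \tfrac{1}{2}\mathcal{L}_{\bar g}(X) = \tfrac12(\nabla_{\bar g} X^\flat + \text{sym})$, whose $L^2$-adjoint is $\delta_{\bar g}$ (up to a constant). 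The orbit directions are therefore $\operatorname{Im}\mathcal{L}_{\bar g}$, and by \emph{Berger--Ebin's splitting lemma} (quoted above) we have the direct-sum decomposition $S_2(M) = \operatorname{Im}\mathcal{L}_{\bar g} \oplus (C^\infty(M)\cdot\bar g) \oplus S^{_{TT}}_{2,\bar g}(M)$ in the generic case, with $(C^\infty(M)\cdot\bar g)$ replaced by $(E_{n\lambda}^\perp\cdot\bar g)$ for the round sphere — the point being that on $\mathbb{S}^n(r)$ the first eigenfunctions $u\in E_{n\lambda}$ give conformal factors $u\bar g$ that are themselves tangent to the diffeomorphism orbit (they are Lie derivatives along gradient fields), so they must be removed from the slice.

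Next I would \emph{define} the candidate slice $\mathcal{S}_{\bar g}$ as the zero set of the gauge map $\Phi(g) := \delta_{\bar g}\bigl(g - \bar g - c(g)\,\bar g\bigr)$ intersected with a small ball, where in the generic case $c(g)$ is chosen so that $g - \bar g - c(g)\bar g$ is $L^2$-orthogonal to $\bar g$ (equivalently, $c(g) = \tfrac1n\overline{\operatorname{tr}_{\bar g}(g-\bar g)}$), and in the spherical case $c(g)\bar g$ is the $L^2$-projection of the conformal part of $g-\bar g$ onto $E_{n\lambda}\cdot\bar g$. The linearization of $\Phi$ at $\bar g$ is essentially $h \mapsto \delta_{\bar g}h$ on the relevant subspace, which is surjective onto the appropriate space of one-forms with the correct kernel; an application of the \emph{Implicit Function Theorem} on the Banach spaces then produces $\varepsilon_1>0$ and shows $\mathcal{S}_{\bar g}$ is a smooth submanifold with the asserted tangent space $T_{\bar g}\mathcal{S}_{\bar g} = S^{_{TT}}_{2,\bar g}(M)\oplus(C^\infty(M)\cdot\bar g)$ (resp.\ $\oplus(E_{n\lambda}^\perp\cdot\bar g)$). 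The \emph{gauge-fixing} statement — that every nearby metric $g$ can be pulled back into $\mathcal{S}_{\bar g}$ by some $\varphi\in\mathscr{D}(M)$ — follows from a second implicit-function / inverse-function argument applied to the map $(X, g)\mapsto \Phi(\varphi_X^* g)$, where $\varphi_X = \exp(X)$, using that its derivative in $X$ at $(0,\bar g)$ is (a constant times) $\delta_{\bar g}\mathcal{L}_{\bar g}$, an elliptic operator with kernel the Killing fields; here one works modulo Killing fields, which is why the orthogonality condition $\langle X, \nabla_{\bar g}u\rangle_{L^2}=0$ appears in the description of $(T_{\bar g}\mathcal{S}_{\bar g})^\perp$.

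Finally I would verify the refined $L^2$-orthogonal decomposition $S_2(M) = T_{\bar g}\mathcal{S}_{\bar g}\oplus(T_{\bar g}\mathcal{S}_{\bar g})^\perp$. The content beyond Berger--Ebin is that the orbit piece $\operatorname{Im}\mathcal{L}_{\bar g}$ is itself split off \emph{orthogonally} and described concretely as $\{\mathcal{L}_{\bar g}X : \langle X,\nabla_{\bar g}u\rangle_{L^2}=0\ \forall u\}$: one decomposes an arbitrary vector field $X = X_0 + \nabla_{\bar g}\psi + (\text{Killing part})$ via Hodge-type theory for the elliptic operator $\delta_{\bar g}\mathcal{L}_{\bar g}$ acting on vector fields, notes $\mathcal{L}_{\bar g}$ kills Killing fields, and observes that $\mathcal{L}_{\bar g}(\nabla_{\bar g}\psi) = 2\nabla^2_{\bar g}\psi$ contributes both to the trace-free and conformal parts in a way that, after subtracting the conformal projection, lands in $\operatorname{Im}\mathcal{L}_{\bar g}$ restricted to the stated subspace of vector fields; the Einstein condition $Ric_{\bar g} = (n-1)\lambda\bar g$ is used here to commute curvature terms through and to identify when $\nabla^2\psi$ is pure trace (namely $\Delta\psi+n\lambda\psi = 0$, i.e.\ $\psi\in E_{n\lambda}$), which is exactly the source of the sphere exception. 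I expect the main obstacle to be bookkeeping this spherical exceptional case cleanly — making sure the conformal directions coming from $E_{n\lambda}$ are consistently assigned to the orbit side and that the remaining $E_{n\lambda}^\perp\cdot\bar g$ directions are genuinely transverse — rather than any single hard estimate; the elliptic regularity and the implicit function theorem applications are standard once the right operators and splittings are in place.
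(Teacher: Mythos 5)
The paper does not actually prove this theorem: it states that ``the proof is very similar to the traditional one'' and defers to \cite{BM11, Via13}, so there is no in-paper argument to compare against line by line. Your overall architecture --- Banach setting in $W^{2,p}$, linearizing the $\mathscr{D}(M)$-action, invoking Berger--Ebin, two applications of the implicit function theorem, and using Obata's identity $\nabla^2_{\bar g}u=-\lambda u\bar g$ for $u\in E_{n\lambda}$ to explain the spherical exception --- is exactly the standard route the paper has in mind, and those parts are fine. However, your construction of the slice itself has a concrete flaw. Since $c(g)$ is a \emph{constant}, $\delta_{\bar g}(c(g)\bar g)=0$ and your gauge map collapses to $\Phi(g)=\delta_{\bar g}(g-\bar g)$, whose linearization is $h\mapsto\delta_{\bar g}h$. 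The resulting slice is the classical divergence-free slice with tangent space $\ker\delta_{\bar g}$, which is \emph{not} $S_{2,\bar g}^{_{TT}}(M)\oplus(C^\infty(M)\cdot\bar g)$: indeed $\delta_{\bar g}(u\bar g)=-du\neq0$ for non-constant $u$, while $\ker\delta_{\bar g}$ contains traceless non-TT pieces. The refined slice claimed in the theorem requires the gauge condition that $g-\bar g$ be $L^2$-orthogonal to $\{\mathcal{L}_{\bar g}(Y):\operatorname{div}_{\bar g}Y=0\}$, i.e.\ that the co-exact (divergence-free) part of the one-form $\delta_{\bar g}(g-\bar g)$ vanish; this is precisely the ``improvement'' over the traditional Ebin--Palais theorem, and your map does not encode it.

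Your third paragraph also glosses over the step that actually fails. Writing $X=\nabla_{\bar g}\psi+Y$ with $\operatorname{div}_{\bar g}Y=0$, the tensor $2\bigl(\nabla^2_{\bar g}\psi-\tfrac1n(\Delta_{\bar g}\psi)\bar g\bigr)$ is traceless, is $L^2$-orthogonal to $S_{2,\bar g}^{_{TT}}(M)$, to $C^\infty(M)\cdot\bar g$, \emph{and} to every $\mathcal{L}_{\bar g}(Y)$ with $\operatorname{div}_{\bar g}Y=0$ (its divergence equals $-\tfrac{n-1}{n}\,d(\Delta_{\bar g}\psi+n\lambda\psi)$, an exact form, hence pairs to zero with divergence-free fields), yet it is nonzero unless $\Delta_{\bar g}\psi+n\lambda\psi$ is constant. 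So after ``subtracting the conformal projection'' it does \emph{not} land in $\mathcal{L}_{\bar g}$ of the stated subspace of vector fields, and the three summands you (and the theorem) list do not span $S_2(M)$: the genuine $L^2$-orthogonal complement of $S_{2,\bar g}^{_{TT}}(M)\oplus(C^\infty(M)\cdot\bar g)$ must also contain the trace-free Hessian directions $\nabla^2_{\bar g}\psi-\tfrac1n(\Delta_{\bar g}\psi)\bar g$. You should either add this summand to the description of $(T_{\bar g}\mathcal{S}_{\bar g})^{\perp}$ or prove the weaker (and sufficient for the rest of the paper) statements that a slice with tangent space $S_{2,\bar g}^{_{TT}}(M)\oplus(C^\infty(M)\cdot\bar g)$ exists, is transverse and complementary to the orbit directions by Berger--Ebin, and absorbs all nearby metrics up to diffeomorphism; as written, your verification of the claimed orthogonal splitting cannot be completed.
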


\vskip .2in

In order to estimate the second variation of $\mathcal{F}_{M,\bar g}$, we also need to investigate the analytic properties of the operator $\mathscr{L}_{\bar g}$ (as defined in Proposition \ref{svarF}):
\begin{proposition}\label{prop:positive_L}
	Suppose $\bar g$ is an Einstein metric with Ricci curvature
	\begin{align*}
		Ric_{\bar g} = (n-1) \lambda \bar g,
	\end{align*}
	where $\lambda \geq 0$ is a constant, then the operator $\mathscr{L}_{\bar g}$ is a non-negative operator. Moreover, $\mathscr{L}_{\bar g}$ admits non-trivial kernel when 
	\begin{itemize}
		\item $\lambda > 0$ and $\bar g$ is spherical: $$\ker \mathscr{L}_{\bar g} = E_{n\lambda},$$
		\item $\lambda = 0$ and $\bar g$ is Ricci-flat: $$\ker \mathscr{L}_{\bar g} = \mathbb{R}.\quad $$
	\end{itemize}
\end{proposition}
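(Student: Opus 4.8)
The plan is to compute $\mathscr{L}_{\bar g}$ explicitly for an Einstein metric and recognize it as a product of two second-order operators whose spectra we can control. Recall $\mathscr{L}_{\bar g} u = tr_{\bar g}\Gamma_{\bar g}^* u$, and by the identity stated just before Proposition \ref{svarF},
\begin{align*}
\mathscr{L}_{\bar g} u = \frac{1}{2}\left( P_{\bar g} - \frac{n+4}{2} Q_{\bar g}\right) u.
\end{align*}
On an Einstein manifold with $Ric_{\bar g} = (n-1)\lambda \bar g$, the Paneitz operator $P_{\bar g}$ has constant coefficients in the sense that $R_{\bar g} = n(n-1)\lambda$ and $Ric_{\bar g}$ are parallel, so $P_{\bar g}$ becomes a polynomial in $\Delta_{\bar g}$: one gets (from the definition $P_g = \Delta_g^2 - div_g[(a_n R_g g + b_n Ric_g)d] + \frac{n-4}{2}Q_g$) an expression of the form
\begin{align*}
P_{\bar g} = \Delta_{\bar g}^2 - \alpha_n \lambda \Delta_{\bar g} + \frac{n-4}{2} Q_{\bar g},
\end{align*}
for an explicit constant $\alpha_n$ (namely $\alpha_n = (n-1)\big[a_n n + b_n\big]$ after contracting the divergence term against the parallel tensor $a_n R_{\bar g}\bar g + b_n Ric_{\bar g}$), and with $Q_{\bar g} = \frac{1}{8}n(n-2)(n+2)\lambda^2$ from \eqref{Jtensor}. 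First I would carry out this reduction carefully to pin down the coefficients.

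The next step is to factor the resulting second-order-in-$\Delta$ operator. Writing $\mathscr{L}_{\bar g} = \frac{1}{2}\big(\Delta_{\bar g}^2 - \alpha_n\lambda\Delta_{\bar g} - 2(n+2)Q_{\bar g}/\text{(appropriate)}\cdots\big)$, I expect the quadratic in $\Delta_{\bar g}$ to factor over the reals as
\begin{align*}
\mathscr{L}_{\bar g} = \frac{1}{2}\left(-\Delta_{\bar g} - c_1\lambda\right)\left(-\Delta_{\bar g} - c_2\lambda\right)
\end{align*}
with $c_1, c_2 \geq 0$; one of the roots should turn out to be exactly $n\lambda$ (the first nonzero eigenvalue threshold in Lichnerowicz–Obata, Lemma \ref{lem:L-O}), matching the expected kernel $E_{n\lambda}$, and the other root $c_2\lambda$ should be strictly smaller than $n\lambda$ so that $-\Delta_{\bar g} - c_2\lambda$ is a \emph{positive} operator on the $L^2$-orthogonal complement of constants and is positive-semidefinite overall (with kernel the constants only when $c_2 = 0$). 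Testing against an arbitrary $u$ with $\int_M u\, dv_{\bar g} = 0$, expanding in eigenfunctions $\Delta_{\bar g}\phi_k = -\mu_k\phi_k$ with $\mu_1 \geq n\lambda$ by Lichnerowicz–Obata, gives $\langle u, \mathscr{L}_{\bar g} u\rangle_{\bar g} = \frac{1}{2}\sum_k (\mu_k - c_1\lambda)(\mu_k - c_2\lambda)|u_k|^2 \geq 0$, since both factors are nonnegative once $\mu_k \geq n\lambda \geq \max(c_1\lambda, c_2\lambda)$; for constants the factor $\mu_0 = 0$ forces the value $0$. This establishes non-negativity and, simultaneously, identifies the kernel: in the Ricci-flat case $\lambda = 0$ the operator is just $\frac{1}{2}\Delta_{\bar g}^2$, whose kernel is exactly the constants $\mathbb{R}$; in the positive case the kernel consists of $u$ with $(\mu_k - c_1\lambda)(\mu_k - c_2\lambda)|u_k|^2 = 0$ for all $k$, i.e. (using $c_2 < n \le c_1$ and $\mu_k \geq n\lambda$ for $k\geq 1$) precisely the span of constants together with first eigenfunctions, and on the sphere one checks via the equality case of Lichnerowicz–Obata / direct computation that $\mathscr{L}_{\bar g}$ annihilates $E_{n\lambda}$ while the constants, being annihilated only when the $\mu_0 = 0$ factor against $c_2\lambda$ vanishes, do contribute—so one must check whether constants are actually killed: $\mathscr{L}_{\bar g}(1) = \frac{1}{2}(0 - c_1\lambda)(0-c_2\lambda)\cdot 1 = \frac{1}{2}c_1 c_2\lambda^2$, which is nonzero iff $c_2 \neq 0$; hence for the sphere the kernel is $E_{n\lambda}$ exactly, as claimed, provided $c_2 > 0$, which I would verify from the explicit constants.

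The main obstacle I anticipate is purely computational bookkeeping: getting the constant $\alpha_n$ (equivalently $c_1, c_2$) exactly right, and in particular verifying the two structural facts the statement implicitly relies on — that $n\lambda$ is a root of the characteristic quadratic (so that $E_{n\lambda} \subseteq \ker\mathscr{L}_{\bar g}$) and that the second root $c_2\lambda$ satisfies $0 < c_2 < n$ (so that the operator is genuinely semi-definite, has no extra kernel beyond $E_{n\lambda}$ in the spherical case, and reduces to having kernel exactly $\mathbb{R}$ when $\lambda = 0$). A clean way to sidestep part of this is to note that $\mathscr{L}_{\bar g}$ must annihilate constants times... no — in fact $\mathscr{L}_{\bar g}(1) \neq 0$ in general, so instead I would cross-check the factorization against the known relation $\mathscr{L}_{\bar g} = tr_{\bar g}\Gamma_{\bar g}^*$ by directly applying the formula for $\Gamma_{\bar g}^* f$ from Section \ref{sec:basic_var_form} with $f$ constant and $\bar g$ Einstein, contracting, and matching. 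One should also record the identity $\mathring P_{\bar g} = P_{\bar g} - \frac{n-4}{2}Q_{\bar g}$ so that $\mathscr{L}_{\bar g} = \frac{1}{2}(\mathring P_{\bar g} - 4Q_{\bar g})$, which makes the appearance of $E_{n\lambda}$ in the kernel most transparent on the round sphere where $\mathring P_{\bar g}$ is the classical product $\prod(-\Delta_{\bar g} + \text{const})$.
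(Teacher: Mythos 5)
Your plan is the same as the paper's: write $\mathscr{L}_{\bar g}=\tfrac12\bigl(P_{\bar g}-\tfrac{n+4}{2}Q_{\bar g}\bigr)$, reduce $P_{\bar g}$ to a polynomial in $\Delta_{\bar g}$ using that $R_{\bar g}$ and $Ric_{\bar g}$ are parallel, factor the resulting quadratic, and invoke Lichnerowicz--Obata. The paper simply records the factorization
$\mathscr{L}_{\bar g}=\tfrac12\bigl(-\Delta_{\bar g}+\tfrac{(n-2)(n+2)}{2}\lambda\bigr)\bigl(-\Delta_{\bar g}-n\lambda\bigr)$
and reads off everything from it. So the outline is right; the issue is with the structural facts you say you still need to verify, and one of them is false as you have guessed it.

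Carrying out your own reduction (with $a_nR_{\bar g}\bar g+b_nRic_{\bar g}=\tfrac{n^2-2n-4}{2}\lambda\,\bar g$ and $4Q_{\bar g}=\tfrac{n(n-2)(n+2)}{2}\lambda^2$) gives characteristic roots $\mu=n\lambda$ and $\mu=-\tfrac{(n-2)(n+2)}{2}\lambda$. So your ansatz $c_1,c_2\ge 0$ with $0<c_2<n$ is wrong: the second root is \emph{negative}, i.e.\ $c_2=-\tfrac{(n-2)(n+2)}{2}$. This actually simplifies your kernel discussion rather than breaking it: the factor $-\Delta_{\bar g}+\tfrac{(n-2)(n+2)}{2}\lambda$ is strictly positive, hence invertible, so $\ker\mathscr{L}_{\bar g}=\ker(-\Delta_{\bar g}-n\lambda)=E_{n\lambda}$ with no case analysis about constants needed. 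But it does overturn two of your intermediate claims: the value on constants is $\mathscr{L}_{\bar g}(1)=\tfrac12\cdot\tfrac{(n-2)(n+2)}{2}\lambda\cdot(-n\lambda)=-2Q_{\bar g}$, which is \emph{negative} for $\lambda>0$, not the positive $\tfrac12 c_1c_2\lambda^2$ you compute (and certainly not $0$, as you first assert before correcting yourself). Consequently the non-negativity of $\mathscr{L}_{\bar g}$ should be understood on the mean-zero subspace, where $(\mu_k+\tfrac{(n-2)(n+2)}{2}\lambda)(\mu_k-n\lambda)\ge 0$ follows from $\mu_k\ge n\lambda$; this is exactly the space on which $\mathscr{L}_{\bar g}$ is applied in Proposition \ref{svarF}, so nothing downstream is affected. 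With the corrected constants your argument closes; as written, the inequality ``$\mu_k\ge n\lambda\ge\max(c_1\lambda,c_2\lambda)$'' and the conclusion ``kernel $=\mathbb{R}$ only when $c_2=0$'' rest on a factorization that does not occur.
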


\begin{proof}
	By definition,
	\begin{align*}
		\mathscr{L}_{\bar g}u =& \frac{1}{2} \left( P_{\bar g} - \frac{n+4}{2} Q_{\bar g} \right) u
		= \frac{1}{2} \left( -\Delta_{\bar g} + \frac{(n-2)(n+2)}{2} \lambda \right) \left( -\Delta_{\bar g} - n \lambda\right)u .
	\end{align*}
	
For $\lambda > 0$, the first eigenvalue of $(-\Delta_{\bar g})$ is at least $n\lambda$ by Lemma \ref{lem:L-O}, which implies the operator $\mathscr{L}_{\bar g}$ is non-negative and 
	\begin{align*}
		\ker \mathscr{L}_{\bar g} = \ker\ (-\Delta_{\bar g} - n\lambda).
	\end{align*}
	This shows $\mathscr{L}_{\bar g}$ has non-trivial kernel if and only if $\bar g$ is spherical and $\ker \mathscr{L}_{\bar g}$ consisted of first eigenfunctions of $\Delta_{\bar g}$.
		
	For $\lambda = 0$, 
	\begin{align*}
	\mathscr{L}_{\bar g}u = \frac{1}{2} \Delta_{\bar g}^2 u.
	\end{align*}
	It is clear that $\mathscr{L}_{\bar g}$ is non-negative and $$\ker \mathscr{L}_{\bar g} = \ker  \Delta_{\bar g} = \mathbb{R}.$$
\end{proof}

\ \\

With these preparations, we summarize variational properties of $\mathcal{F}_{M,\bar g}$ at a strictly stable Einstein metric $\bar g$:
\begin{proposition}\label{prop:secvar_F}
Suppose $(M^n,\bar{g})$ is a strictly stable Einstein manifold with $$Ric_{\bar{g}}=(n-1)\lambda{\bar{g}},$$ where $\lambda \geq 0$ is a constant, then $\bar g$ is a critical point of $\mathcal{F}_{M,\bar g}$ and
$$ (D^2 \mathcal{F}_{M,\bar g}) \cdot (h, h) \leq 0$$
for any $h = \mathring h + \frac{1}{n}(tr_{\bar g} h) \bar{g}\in S_{2,\bar g}^{_{TT}}(M) \oplus (C^\infty (M) \cdot \bar g)$. Moreover, the equality holds if and only if 
\begin{itemize}
\item $h\in \mathbb{R}\bar{g}$, when $(M^n,\bar{g})$ is not isometric to the round sphere up to a rescaling of the metric. 
\item $h\in(\mathbb{R}\oplus E_{n\lambda})\bar{g}$, when $(M^n,\bar{g})$ is isometric to the round sphere $\mathbb{S}^{n}(r)$ with radius $r=\frac{1}{\sqrt{\lambda}}$, 
\end{itemize}
where $$ E_{n\lambda}:= \{u\in C^{\infty}(\mathbb{S}^{n}(r))\ |\ \Delta_{\mathbb{S}^{n}(r)}u + n\lambda u =0\}$$
is the space of first eigenfunctions for the spherical metric.
\end{proposition}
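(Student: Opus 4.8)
The plan is to deduce the first and second variations of $\mathcal{F}_{M,\bar g}$ at $\bar g$ from the identities already established, and then to verify the sign term by term. Since an Einstein metric is in particular $J$-Einstein, Proposition \ref{prop:critical_pt_J_Einstein} gives at once that $\bar g$ is a critical point of $\mathcal{F}_{M,\bar g}$, so all that remains is the second variation. The hypothesis presents $h$ in the form $h = \mathring h + \frac{1}{n}(tr_{\bar g} h)\bar g$ with $\mathring h \in S_{2,\bar g}^{_{TT}}(M)$, which is precisely the splitting appearing in Corollary \ref{cor:secvarEin}; applying it gives
\begin{align*}
(D^2 \mathcal{F}_{M,\bar g}) \cdot (h, h) =& -2\,\Vol_M(\bar g)^{\frac{4}{n}} \int_M \langle \mathring h , (D \mathring{J}_{\bar g}) \cdot \mathring{h} \rangle_{\bar g}\, dv_{\bar g} \\
& - \frac{n+4}{2n^2}\,\Vol_M(\bar g)^{\frac{4}{n}} \int_M (tr_{\bar g} h - \overline{tr_{\bar g} h})\, \mathscr{L}_{\bar g}(tr_{\bar g} h - \overline{tr_{\bar g} h})\, dv_{\bar g}.
\end{align*}

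Next I would show that both integrals are nonnegative. Since $\bar g$ is strictly stable Einstein with scalar curvature $R_{\bar g} = n(n-1)\lambda \geq 0$, Corollary \ref{cor:stable_Einstein_DJ_positive} tells us that $D\mathring{J}_{\bar g}$ is a positive operator on $S_{2,\bar g}^{_{TT}}(M)$, so the first integral is $\geq 0$ and vanishes if and only if $\mathring h = 0$. By Proposition \ref{prop:positive_L} (which uses $\lambda \geq 0$ together with Lemma \ref{lem:L-O}), $\mathscr{L}_{\bar g}$ is a nonnegative, formally self-adjoint operator, so the second integral is $\geq 0$ and vanishes if and only if $\mathscr{L}_{\bar g}(tr_{\bar g} h - \overline{tr_{\bar g} h}) = 0$. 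Because the prefactor $-2\,\Vol_M(\bar g)^{\frac{4}{n}}$ is negative, this already yields $(D^2 \mathcal{F}_{M,\bar g}) \cdot (h, h) \leq 0$.

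Finally I would analyze the equality case, which occurs exactly when both integrals vanish, i.e.\ $\mathring h = 0$ and the zero-mean function $u := tr_{\bar g} h - \overline{tr_{\bar g} h}$ lies in $\ker \mathscr{L}_{\bar g}$. If $\bar g$ is isometric to the round sphere $\mathbb{S}^n(1/\sqrt{\lambda})$, then $\ker \mathscr{L}_{\bar g} = E_{n\lambda}$ by Proposition \ref{prop:positive_L}, so $u \in E_{n\lambda}$ and hence $tr_{\bar g} h \in \mathbb{R} \oplus E_{n\lambda}$, giving $h = \frac{1}{n}(tr_{\bar g} h)\bar g \in (\mathbb{R} \oplus E_{n\lambda})\bar g$. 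Otherwise Proposition \ref{prop:positive_L} shows $\ker \mathscr{L}_{\bar g}$ is either trivial (when $\lambda > 0$ and $\bar g$ is not spherical) or equal to $\mathbb{R}$ (when $\lambda = 0$, the Ricci-flat case, which is never a round sphere); in either situation the zero-mean condition forces $u = 0$, so $tr_{\bar g} h$ is constant and $h \in \mathbb{R}\bar g$.

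The genuinely hard work---the second-variation identity of Corollary \ref{cor:secvarEin}, the positivity of $D\mathring{J}_{\bar g}$, and the spectral description of $\mathscr{L}_{\bar g}$---has already been carried out, so the remaining difficulty here is essentially bookkeeping: one must track the trace and trace-free parts separately, exploit that the trace part enters only through its zero-mean component, and correctly match the dichotomy between the round sphere and all other (including Ricci-flat) strictly stable Einstein metrics. The only place calling for genuine care is verifying that $\ker\mathscr{L}_{\bar g}$ meets the zero-mean functions only as expected, which is exactly where Lemma \ref{lem:L-O} (the Lichnerowicz-Obata rigidity) is used.
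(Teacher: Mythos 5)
Your proposal is correct and follows essentially the same route as the paper's own proof: criticality via Proposition \ref{prop:critical_pt_J_Einstein}, the second-variation formula from Corollary \ref{cor:secvarEin}, nonpositivity from Corollary \ref{cor:stable_Einstein_DJ_positive} and Proposition \ref{prop:positive_L}, and the equality case read off from $\ker\mathscr{L}_{\bar g}$. The only difference is that you spell out the zero-mean bookkeeping in the equality case more explicitly than the paper does, which is harmless.
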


\begin{proof}
According to Proposition \ref{prop:critical_pt_J_Einstein}, we can conclude that $\bar g$ is a critical point of $ \mathcal{F}_{M,\bar g}$, since Einstein metrics are $J$-Einstein. Recall in Corollary \ref{cor:secvarEin}, we showed that
\begin{align*}
&(D^2 \mathcal{F}_{M,\bar g}) \cdot (h, h) \\
		\notag=&-2 \Vol_M(\bar g)^{\frac{4}{n}}  \left[ \int_M \langle \mathring h , (D \mathring{J}_{\bar g}) \cdot \mathring{h} \rangle_{\bar g} dv_{\bar g} + \frac{n+4}{4n^2} \int_M  \left[ ( tr_{\bar g} h - \overline{tr_{\bar{g}} h})   \mathscr{L}_{\bar{g}} (tr_{\bar g} h - \overline{tr_{\bar{g}} h}) \right] dv_{\bar g} \right]
	\end{align*}		
holds for any $h \in S_{2,\bar g}^{_{TT}}(M) \oplus (C^\infty (M) \cdot \bar g) $. It is obvious that $D^2\mathcal{F}_{M,\bar g}$ is non-positive definite according to Corollary \ref{cor:stable_Einstein_DJ_positive} and Proposition \ref{prop:positive_L}. Furthermore, $D^2 \mathcal{F}_{M,\bar g}$ vanishes if and only $\mathring h = 0$ and
\begin{align*}
	(tr_{\bar g} h - \overline{tr_{\bar{g}} h}) \in \ker \mathscr{L}_{\bar g}.
\end{align*} 
Now the conclusion follows from Proposition \ref{prop:positive_L}.
\end{proof}


\vskip .2in

Another fundamental result we need is the following version of \emph{Morse lemma} on Banach manifold for degenerate functions:
\begin{lemma}[Fisher-Marsden \cite{FM75}]\label{morselemma}
Let $\mathcal{P}$ be a Banach manifold and $f: \mathcal{P}\rightarrow \mathbb{R}$ a $C^2$ function. Suppose that $\mathcal{Q} \subset \mathcal{P}$ is a submanifold, $f=0$ and $df=0$ on $\mathcal{Q}$ and that there is a smooth normal bundle neighborhood of $\mathcal{Q}$ such that if $\mathcal{E}_x$ is the normal complement to $T_{x}\mathcal{Q}$ in $T_{x}\mathcal{P}$ then $d^2f(x)$ is weakly negative definite on $\mathcal{E}_x$ $($i.e. $d^2f(x)(v, v)\leq0$ with equality only if $v=0)$. Let $\langle \langle \ ,\ \rangle \rangle_{x}$ be a weak Riemannian structure with a smooth connection and assume that $f$ has a smooth $\langle \langle \ ,\ \rangle \rangle_{x}$-gradient, $Y(x)$.
Assume $DY(x)$ maps $\mathcal{E}_x$ to $\mathcal{E}_x$ and is an isomorphism for $x\in \mathcal{Q}$. Then there is a neighborhood $U$ of $\mathcal{Q}$ such that $y\in U$, $f(y)\geq0$ implies $y\in \mathcal{Q}$.\\
\end{lemma}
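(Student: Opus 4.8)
The plan is to prove this as a \emph{parametrized Morse lemma} on the Banach manifold $\mathcal{P}$, in the spirit of Palais's infinite-dimensional Morse lemma but allowing a degenerate critical \emph{submanifold} $\mathcal{Q}$ rather than an isolated critical point. First I would fix the smooth normal bundle neighborhood whose existence is assumed, so that every point $y$ sufficiently close to $\mathcal{Q}$ is written uniquely as $y=\exp_q(v)$ with $q\in\mathcal{Q}$ and $v\in\mathcal{E}_q$ small, the exponential being that of the smooth connection attached to $\langle\langle\,\cdot,\cdot\,\rangle\rangle$. In these coordinates set $F(q,v):=f(\exp_q v)$. Since $f\equiv 0$ and $df\equiv 0$ on $\mathcal{Q}$, we have $F(q,0)=0$ and $\partial_v F(q,0)=0$, so $F$ begins at second order in $v$. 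The naive Taylor estimate $F(q,v)=\tfrac12 d^2 f(q)(v,v)+o(\|v\|^2)$ is \emph{not} enough to conclude: in infinite dimensions weak negative definiteness of $d^2f(q)$ does not bound the quadratic form away from zero, so the remainder could dominate along bad directions. This is exactly the gap the remaining hypotheses are designed to close, and the goal is therefore to remove the remainder entirely by a coordinate change.

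The key point is that the transverse Hessian is \emph{represented by an invertible operator}. Because $Y$ is the $\langle\langle\,\cdot,\cdot\,\rangle\rangle$-gradient of $f$ and $q\in\mathcal{Q}$ is a critical point, one has $\langle\langle DY(q)v,w\rangle\rangle = d^2 f(q)(v,w)$ for $v,w\in T_q\mathcal{P}$; the hypothesis that $DY(q)$ maps $\mathcal{E}_q$ to $\mathcal{E}_q$ isomorphically says precisely that the transverse Hessian is nondegenerate in the strong (operator) sense. I would then state the reduction to be proved: there is a smooth family of local diffeomorphisms $\phi_q$ of neighborhoods of $0$ in $\mathcal{E}_q$, with $\phi_q(0)=0$ and $D\phi_q(0)=\mathrm{id}$, depending smoothly on $q\in\mathcal{Q}$, such that
\[
F(q,\phi_q(w))=\tfrac12\, d^2 f(q)(w,w).
\]

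The main work — and the main obstacle — is constructing this family, which I would carry out by Moser's deformation (homotopy) method adapted to the Banach setting. Interpolate between the quadratic part and $F$ by $F_t(q,v):=\tfrac12 d^2 f(q)(v,v)+t\big(F(q,v)-\tfrac12 d^2 f(q)(v,v)\big)$ and seek a time-dependent vector field $Z_t(q,\cdot)$ on $\mathcal{E}_q$ whose flow $\psi_t$ satisfies $F_t\circ\psi_t=F_0$; differentiating reduces this to the homological equation $dF_t(Z_t)=-\partial_t F_t$. The right-hand side $-\partial_t F_t=-(F-\tfrac12 d^2 f(q)(\cdot,\cdot))$ vanishes to third order at $v=0$, while the Hadamard-lemma factorization $\nabla F_t(v)=H_t(v)\,v$ with $H_t(0)=DY(q)|_{\mathcal{E}_q}$ invertible lets one solve for $Z_t$ with $Z_t(q,0)=0$ and $D_vZ_t(q,0)=0$; the inverse of the transverse Hessian depends smoothly on $q$ by the smoothness of $Y$ and the openness of the invertible operators, giving smooth parametric dependence of $Z_t$ and hence of $\phi_q=\psi_1$. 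Shrinking the neighborhood of $\mathcal{Q}$ guarantees the flow exists up to $t=1$ uniformly. This parametric, infinite-dimensional bookkeeping — ensuring the homological equation is solvable with the correct vanishing and that everything is uniform and smooth in $q$ over a full neighborhood of $\mathcal{Q}$ — is the technical heart of the argument.

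With the reduction in hand the conclusion is immediate. Let $U$ be the resulting neighborhood of $\mathcal{Q}$ and take $y\in U$ with $f(y)\ge 0$; write $y=\exp_q(v)$ and $v=\phi_q(w)$. Then $f(y)=F(q,\phi_q(w))=\tfrac12 d^2 f(q)(w,w)\le 0$ by the weak negative definiteness of $d^2 f(q)$ on $\mathcal{E}_q$, with equality \emph{only} when $w=0$. Hence $f(y)\ge 0$ forces $w=0$, so $v=\phi_q(0)=0$ and $y=q\in\mathcal{Q}$. Note that after the Morse reduction there is no remainder term, so the merely weak negative definiteness now suffices to force strict negativity off $\mathcal{Q}$; this is precisely what makes the invertibility hypothesis on $DY$ indispensable rather than cosmetic.
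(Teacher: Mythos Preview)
The paper does not prove this lemma at all: it is stated with attribution to Fischer--Marsden \cite{FM75} and used as a black box in the proof of Proposition~\ref{prop:rigidity_on_slice}. There is therefore no ``paper's own proof'' to compare against.

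Your sketch is a reasonable outline of a Palais-type parametrized Morse lemma via Moser's homotopy method, and you correctly identify the crucial point: weak negative definiteness of $d^2f$ on $\mathcal{E}_x$ is insufficient on its own in infinite dimensions, and it is the invertibility of $DY(x)|_{\mathcal{E}_x}$ that supplies the operator-level nondegeneracy needed to kill the Taylor remainder by a change of coordinates. That said, the proposal remains a sketch rather than a proof: the Hadamard factorization $\nabla F_t(v)=H_t(v)v$ and the solvability of the homological equation $dF_t(Z_t)=-\partial_t F_t$ with the required vanishing at $v=0$, together with uniformity in $q$ and $t$, need to be carried out carefully in the Banach setting (one must check that the relevant integrals defining $H_t$ converge in operator norm and that $H_t(v)$ stays invertible on a uniform neighborhood). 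If your goal is only to use the lemma as the paper does, citing \cite{FM75} suffices; if you intend to supply a full proof, the Moser deformation you outline is the right strategy but the analytic details deserve more than a paragraph.
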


According to Theorem \ref{thm:slice_thm}, we can find a local slice $\mathcal{S}_{\bar g}$ through $\bar g$ and identify $\mathcal{Q}_{\bar{g}}$ to be the submanifold of $\mathcal{S}_{\bar g}$ consisted of homothetic metrics, that is,
$$\mathcal{Q}_{\bar{g}}:=\{c^2 \bar{g}\in\mathcal{S}_{\bar{g}} \ |\  c \neq 0\}.$$ Consider the restriction of $\mathcal{F}_{M,\bar g}$ on the local slice $\mathcal{S}_{\bar g}$, denoted by $\mathcal{F}_{M,\bar g} |_{\mathcal{S}_{\bar g}}$. Applying the previous Morse lemma, we obtain the following rigidity result:
\begin{proposition}\label{prop:rigidity_on_slice}
	Suppose $(M^n, \bar g)$ is a strictly stable Einstein manifold with Ricci curvature
	$$Ric_{\bar g} = (n-1)\lambda \bar g,$$
	where $\lambda \geq 0$ is a constant. There is a neighborhood of $\bar g$ in the local slice $\mathcal{S}_{\bar g}$, denoted by $U_{\bar g}$, such that any metric $g_s \in U_{\bar g}$ satisfying
	\begin{align*}
	\mathcal{F}_{M,\bar g}|_{\mathcal{S}_{\bar g}} (g_s) \geq \mathcal{F}_{M,\bar g}|_{\mathcal{S}_{\bar g}}(\bar g)
	\end{align*}
	implies that $g_s = c^2 \bar g$ for some constant $c > 0$.
\end{proposition}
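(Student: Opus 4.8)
\textbf{Proof proposal for Proposition \ref{prop:rigidity_on_slice}.}

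The plan is to apply the Fisher--Marsden Morse lemma (Lemma \ref{morselemma}) with $\mathcal{P}$ the local slice $\mathcal{S}_{\bar g}$ (suitably completed to a Banach manifold, say in the $W^{2,p}$ topology for some fixed $p>n$ as in Theorem \ref{thm:slice_thm}), $f = \mathcal{F}_{M,\bar g}|_{\mathcal{S}_{\bar g}} - \mathcal{F}_{M,\bar g}(\bar g)$, and $\mathcal{Q} = \mathcal{Q}_{\bar g}$ the one-parameter family of homothetic metrics $c^2\bar g$ inside the slice. First I would check the hypotheses on $\mathcal{Q}_{\bar g}$: by scaling invariance of $\mathcal{F}_{M,\bar g}$ we have $f \equiv 0$ on $\mathcal{Q}_{\bar g}$, and since $\bar g$ is a critical point of $\mathcal{F}_{M,\bar g}$ (Proposition \ref{prop:critical_pt_J_Einstein}, as Einstein implies $J$-Einstein) and every point of $\mathcal{Q}_{\bar g}$ is again an Einstein metric isometric to a rescaling of $\bar g$, the differential $df$ vanishes along $\mathcal{Q}_{\bar g}$ as well. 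The tangent space $T_{\bar g}\mathcal{Q}_{\bar g} = \mathbb{R}\bar g$ (or $\mathbb{R}\bar g$ sitting inside $T_{\bar g}\mathcal{S}_{\bar g}$), so the normal complement $\mathcal{E}_{\bar g}$ inside $T_{\bar g}\mathcal{S}_{\bar g} = S_{2,\bar g}^{_{TT}}(M)\oplus(C^\infty(M)\cdot\bar g)$ is $S_{2,\bar g}^{_{TT}}(M)\oplus(C_0^\infty(M)\cdot\bar g)$, the trace-part being restricted to functions of zero average (in the spherical case one further removes $E_{n\lambda}$, i.e. the normal complement is $S_{2,\bar g}^{_{TT}}(M)\oplus((E_{n\lambda}^\perp\cap C_0^\infty)\cdot\bar g)$, matching the exceptional decomposition in Theorem \ref{thm:slice_thm}).

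The weak negative-definiteness of $d^2 f(\bar g)$ on $\mathcal{E}_{\bar g}$ is exactly the content of Proposition \ref{prop:secvar_F}: for $h = \mathring h + \frac1n(tr_{\bar g}h)\bar g$ with $\mathring h\in S_{2,\bar g}^{_{TT}}(M)$ and $tr_{\bar g}h$ of zero average (so $\overline{tr_{\bar g}h} = 0$), Corollary \ref{cor:secvarEin} gives
\begin{align*}
(D^2\mathcal{F}_{M,\bar g})\cdot(h,h) = -2\Vol_M(\bar g)^{\frac4n}\left[\int_M \langle\mathring h,(D\mathring J_{\bar g})\cdot\mathring h\rangle_{\bar g}\,dv_{\bar g} + \frac{n+4}{4n^2}\int_M (tr_{\bar g}h)\,\mathscr{L}_{\bar g}(tr_{\bar g}h)\,dv_{\bar g}\right],
\end{align*}
which is $\le 0$ by Corollary \ref{cor:stable_Einstein_DJ_positive} (positivity of $D\mathring J_{\bar g}$ on TT-tensors, using strict stability and $\lambda\ge 0$) and Proposition \ref{prop:positive_L} (non-negativity of $\mathscr{L}_{\bar g}$), with equality forcing $\mathring h = 0$ and $tr_{\bar g}h\in\ker\mathscr{L}_{\bar g}$; on $\mathcal{E}_{\bar g}$ the latter kernel has been quotiented out (in the non-spherical case $\ker\mathscr{L}_{\bar g}=\mathbb{R}$ is removed by the zero-average condition, in the spherical case $\ker\mathscr{L}_{\bar g}=\mathbb{R}\oplus E_{n\lambda}$ is removed by both conditions), so $d^2f(\bar g)(h,h) = 0$ with $h\in\mathcal{E}_{\bar g}$ forces $h = 0$. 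The same argument applies verbatim at any $c^2\bar g\in\mathcal{Q}_{\bar g}$ by scaling.

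It remains to verify the structural hypotheses on the gradient: one takes $\langle\langle\cdot,\cdot\rangle\rangle$ to be the $L^2$ inner product on $\mathcal{S}_{\bar g}$, checks that $\mathcal{F}_{M,\bar g}|_{\mathcal{S}_{\bar g}}$ has a smooth $L^2$-gradient $Y$, and that $DY(\bar g)$ — which is (up to the constant $-2\Vol_M(\bar g)^{4/n}$) the operator $\mathring h\mapsto D\mathring J_{\bar g}\cdot\mathring h$ on the TT-summand plus $\frac{n+4}{2n^2}\mathscr{L}_{\bar g}$ on the zero-average trace summand — maps $\mathcal{E}_{\bar g}$ to itself and is an isomorphism there. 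This is where the main work lies: the isomorphism property requires that $D\mathring J_{\bar g}$, a fourth-order operator of the form $\frac{1}{2(n-2)^2}(-\Delta_E^{\bar g}+c R_{\bar g})(-\Delta_E^{\bar g})$ by Proposition \ref{prop:second_var_mathring_J}, be invertible on $S_{2,\bar g}^{_{TT}}(M)$ — which follows from strict stability, since $-\Delta_E^{\bar g}$ is then positive on TT-tensors and $c R_{\bar g}>0$ — and that $\mathscr{L}_{\bar g}$ restricted to the $L^2$-orthogonal complement of its kernel (inside zero-average functions) be invertible, which follows from Proposition \ref{prop:positive_L} together with elliptic theory for the fourth-order operator $\mathscr{L}_{\bar g}$. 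The ellipticity and the Fredholm/self-adjointness package needed to promote "positive on a closed subspace" to "isomorphism onto that subspace" is the technical heart; once it is in place, Lemma \ref{morselemma} yields a neighborhood $U_{\bar g}$ of $\mathcal{Q}_{\bar g}$ (hence of $\bar g$) in $\mathcal{S}_{\bar g}$ such that $f(g_s)\ge 0$ implies $g_s\in\mathcal{Q}_{\bar g}$, i.e. $g_s = c^2\bar g$ for some $c>0$, which is the assertion.
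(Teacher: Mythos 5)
Your overall strategy is exactly the paper's: apply the Fisher--Marsden Morse lemma to $f=\mathcal{F}_{M,\bar g}|_{\mathcal{S}_{\bar g}}$ with $\mathcal{Q}=\mathcal{Q}_{\bar g}$, identify $\mathcal{E}_{\bar g}$ as the zero-average trace part plus the TT part (with $E_{n\lambda}$ removed in the spherical case), and get strict negativity of $D^2\mathcal{F}_{M,\bar g}$ on $\mathcal{E}_{\bar g}$ from Corollary \ref{cor:secvarEin}, Corollary \ref{cor:stable_Einstein_DJ_positive} and Proposition \ref{prop:positive_L}. That part matches the paper step for step.

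The one place where your write-up would fail as stated is the choice of weak Riemannian structure. You propose the plain $L^2$ inner product and then ask that $\mathcal{F}_{M,\bar g}|_{\mathcal{S}_{\bar g}}$ have a \emph{smooth} $L^2$-gradient. But the $L^2$-gradient is (a projection of) $\Gamma_{g_s}^{*}(\rho_{g_s})+\frac{2}{n}g_s \Vol_M(g_s)^{-\frac{n+4}{n}}\mathcal{F}_{M,\bar g}(g_s)$, and $\Gamma_{g_s}^{*}$ is a fourth-order differential operator in $g_s$; on a slice modeled on $W^{2,p}$ (or $C^{4}$) this loses derivatives and does not define a smooth vector field, so the hypothesis of Lemma \ref{morselemma} is not met. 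The paper avoids this by taking the weak structure $\langle\langle h,h\rangle\rangle_{g_s}=\int_M \langle(1-\Delta_{g_s})h,h\rangle_{g_s}\,dv_{g_s}$, for which the gradient becomes $Y(g_s)=P_{g_s}(1-\Delta_{g_s})^{-1}\bigl[\Vol_M(g_s)^{4/n}\bigl(\Gamma_{g_s}^{*}(\rho_{g_s})+\tfrac{2}{n}g_s\Vol_M(g_s)^{-\frac{n+4}{n}}\mathcal{F}_{M,\bar g}(g_s)\bigr)\bigr]$; the smoothing factor $(1-\Delta_{g_s})^{-1}$ is what makes $Y$ smooth, and then $DY_{\bar g}=P_{\bar g}(1-\Delta_{\bar g})^{-1}(DZ_{\bar g})$ is an isomorphism of $\mathcal{E}_{\bar g}$ precisely because $D^2\mathcal{F}_{M,\bar g}|_{\mathcal{S}_{\bar g}}$ is strictly negative definite there. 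So the ``technical heart'' you defer is not just Fredholm bookkeeping for an elliptic self-adjoint operator --- it requires replacing your $L^2$ structure by this smoothing one; with that substitution your argument coincides with the paper's proof.
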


\begin{proof}
From Proposition \ref{prop:secvar_F}, we conclude that $\bar g$ is a critical point of $\mathcal{F}_{M,\bar g} |_{\mathcal{S}_{\bar g}}$ and $D^2 \mathcal{F}_{M,\bar g} |_{\mathcal{S}_{\bar g}}$ is non-positive definite on $T_{\bar g} \mathcal{S}_{\bar g}$. Moreover, it is obvious that $D^2 \mathcal{F}_{M,\bar g} |_{\mathcal{S}_{\bar g}}$ is degenerate if and only if when restricted on 
\begin{align*}
T_{\bar{g}}\mathcal{Q}_{\bar g} = \mathbb{R} \bar{g}.
\end{align*}
Let $\mathcal{E}_{\bar g}$ be the $L^2$-orthogonal complement of $T_{\bar g}\mathcal{Q}_{\bar{g}}$ in $T_{\bar g} \mathcal{S}_{\bar g}$. By Theorem \ref{thm:slice_thm}, we can identify 
$$\mathcal{E}_{\bar g} = \left\{ h\in S_{2, \bar g}^{_{TT}}(M) \oplus (C^\infty(M)\cdot \bar g)\ \left| \ \int_M (tr_{\bar{g}}h) \ dv_{\bar{g}}= 0 \right.\right\},$$ if $\bar g$ is not spherical;\\
$$\mathcal{E}_{\bar{g}}=\left\{ h\in S_{2, \bar g}^{_{TT}}(M) \oplus (E_{n\lambda}^{\perp} \cdot \bar g)\ \left| \ \int_M (tr_{\bar{g}}h) \ dv_{\bar{g}}= 0 \right.\right\},$$ if $\bar g$ is spherical. Therefore, $D^2 \mathcal{F}_{M,\bar g} |_{\mathcal{S}_{\bar g}}$ is strictly negative definite on $\mathcal{E}_{\bar{g}}$.

We introduce a weak Riemannian structure 
$$ \langle\langle h, h \rangle\rangle_{g_{s}}:= \int_{M} [ \langle h,h\rangle_{g_{s}} + \langle\nabla_{g_s} h , \nabla_{g_s} h\rangle_{g_s} ]  dv_{g_s}=\int_{M} \langle(1-\Delta_{g_{s}})h , h\rangle_{g_s}  dv_{g_s}$$
on $\mathcal{S}_{\bar g}$. As in \cite{Yuan20}, it has a smooth connection and the $\langle \langle\  ,\  \rangle\rangle_{g_s}$-gradient of $\mathcal{F}_{M,\bar g}\big|_{\mathcal{S}_{\bar{g}}}$ is given by
$$Y(g_s)= P_{g_s}(1 - \Delta_{g_s})^{-1}\left[ \Vol_M(g_s)^\frac{4}{n}\left( \Gamma_{g_s}^{*}(\rho_{g_s}) + \frac{2}{n}g_{s} \Vol_M(g_s)^{-\frac{n+4}{n}}\mathcal{F}_{M,\bar g}(g_s)\right)\right],$$ 
where $P_{g_s}$ is the orthogonal projection to $T_{g_s}\mathcal{S}_{\bar g}$ and $\rho_{g_s} >0$ is a smooth function on $M$ satisfying $dv_{\bar{g}}= \rho_{g_s} dv_{g_s}$. Obviously, $Y(g_s)$ is a smooth vector field on $\mathcal{S}_{\bar{g}}$.  
Now we define an auxiliary vector field on $\mathcal{S}_{\bar g}$, $$Z(g_s):= \Vol_M(g_s)^\frac{4}{n}\left( \Gamma_{g_s}^{*}(\rho_{g_s}) + \frac{2}{n}g_{s} \Vol_M(g_s)^{-\frac{n+4}{n}}\mathcal{F}_{M,\bar g}(g_s)\right).$$
It is straightforward that $Z(\bar{g})=0$ due to the fact that $\bar{g}$ is Einstein and furthermore, 
$$(DZ_{\bar{g}}) \cdot h= (D^2\mathcal{F}_{M,\bar g}|_{\mathcal{S}_{\bar g}}) \cdot (h, \cdot)$$
for any $h\in \mathcal{E}_{\bar{g}}$. Thus we have
$$DY_{\bar{g}}= P_{\bar{g}}(1- \Delta_{\bar{g}})^{-1} (DZ_{\bar{g}}),$$ 
which implies $DY_{\bar{g}}$ is an isomorphism on $\mathcal{E}_{\bar g}$ due to the reason that $D^2\mathcal{F}_{M,\bar g}|_{\mathcal{S}_{\bar g}}$ is strictly negative definite on $\mathcal{E}_{\bar{g}}$ from previous discussions.

According to Lemma \ref{morselemma}, we can find a neighborhood $U_{\bar g} \subset \mathcal{S}_{\bar g}$ such that any metric $g_s \in U_{\bar g}$ satisfying $$\mathcal{F}_{M,\bar g}|_{\mathcal{S}_{\bar g}} (g_s) \geq \mathcal{F}_{M,\bar g}|_{\mathcal{S}_{\bar g}}(\bar g) = \mathcal{F}_{M,\bar g} (\bar g)$$ implies that $g_s \in \mathcal{Q}_{\bar g}$, which means we can find a constant $c > 0$ such that $g_s = c^2 \bar g$.
\end{proof}

\vskip .2in

Now we can prove our volume comparison theorem:
\begin{proof}[Proof of Theorem~\ref{thmvolQ}]
 Applying Theorem \ref{thm:slice_thm}, we can find a positive constant $\varepsilon_0 < \varepsilon_1$ such that for any metric $\hat g$ satisfies that
 \begin{align*}
 || \hat {g} - \bar{g}||_{C^4(M,\bar{g})} < \varepsilon_0,
 \end{align*}
 there exists a diffeomorphism $\varphi$ such that $\varphi^{*} \hat g\in U_{\bar g} \subseteq \mathcal{S}_{\bar{g}}$, where $U_{\bar g}$ is defined in Proposition \ref{prop:rigidity_on_slice}.

For $\lambda > 0$, suppose $g$ is a Riemannian metric on $M$ satisfying 
$$Q_{g}\geq Q_{\bar {g}}$$ 
and $$|| {g} - \bar{g}||_{C^4(M,\bar{g})} < \varepsilon_0,$$ but with \emph{reversed volume comparison}:
\begin{align}\label{ineq:reversed_vol_comp}
	 \Vol_M(g)\geq  \Vol_M(\bar{g}).
\end{align}
We are going to show that $g$ has to be isometric to $\bar g$ and hence the claimed volume comparison holds. 

According to the argument in the previous paragraph, there exists a diffeomorphism $\varphi$ such that $\varphi^{*}g\in U_{\bar{g}} \subseteq \mathcal{S}_{\bar g}$ and 
$$\mathcal{F}_{M, \bar g}|_{\mathcal{S}_{\bar g}}(\varphi^* g)=  \Vol_M(\varphi^*g)^{ \frac{4}{n}} \int_M (Q _g\circ\varphi)  dv_{\bar g}\geq  \Vol_M(\bar g)^{ \frac{4}{n}} \int_M Q (\bar g) dv_{\bar g}=\mathcal{F}_{M, \bar g}|_{\mathcal{S}_{\bar g}}(\bar g)$$
due to our assumptions and the fact that $Q_{\bar g}$ is a constant.
Thus, we conclude that $\varphi^{*}g=c^2\bar{g}$ for some positive $c \in \mathbb{R}$ by Proposition \ref{prop:rigidity_on_slice}. Now the reversed volume comparison \eqref{ineq:reversed_vol_comp} becomes
$$ \Vol_M(g)= \Vol_M(\varphi^*g)=c^n \Vol_M(\bar g) \geq  \Vol_M(\bar g),$$
which implies $c\geq 1$. However, the curvature comparison assumption implies $c \leq 1$, since
\begin{align}
	Q_{\bar g} = Q_{\bar g}\circ{\varphi} \leq Q_{g}\circ{\varphi}=Q_{\varphi^* g} =c^{-4}Q_{\bar g}.
\end{align}
Therefore, $\varphi^* g = \bar g$ and it concludes the theorem.
\end{proof}

\vskip .2in

With the same idea, we can prove the rigidity of strictly stable Ricci-flat manifolds:
\begin{proof}[Proof of Theorem \ref{thm:ricci_flat_rigidity}]
	Similar to the proof of Theorem \ref{thmvolQ}, we can find an $\varepsilon_0 > 0$ such that for any metric $\hat g$ satisfies
	\begin{align*}
	||\hat g - \bar g||_{C^4(M, \bar g)} < \varepsilon_0,
	\end{align*}
	there is a diffeomorphism $\varphi$ such that $\varphi^*\hat g \in U_{\bar g} \subset \mathcal{S}_{\bar g}$, where $U_{\bar g}$ is given by Proposition \ref{prop:rigidity_on_slice}.
	
	Suppose $g$ is a metric satisfying
	\begin{align*}
	Q_g \geq 0
	\end{align*}
	and
	\begin{align*}
	||g - \bar g||_{C^4(M, \bar g)} < \varepsilon_0,
	\end{align*}
	then we can find a diffeomorphism $\varphi$ such that $\varphi^* g \in U_{\bar g}$ and 
	\begin{align*}
		\mathcal{F}_{M, \bar g}|_{\mathcal{S}_{\bar g}}(\varphi^* g)=  \Vol_M(\varphi^*g)^{ \frac{4}{n}} \int_M (Q _g\circ\varphi)  dv_{\bar g}\geq 0 .
	\end{align*}
	However, the metric $\bar g$ is Ricci-flat and hence
	\begin{align*}
		\mathcal{F}_{M, \bar g}|_{\mathcal{S}_{\bar g}}(\bar g) = \Vol_M(\bar g)^{ \frac{4}{n}} \int_M Q (\bar g) dv_{\bar g} = 0.
	\end{align*}
	This shows $\varphi^* g = c^2 \bar g$ for some positive constant $c \in \mathbb{R}$ by Proposition \ref{prop:rigidity_on_slice}, which means $g$ is homothetic to $\bar g$ up to a diffeomorphism.
\end{proof}

\ \\

\section{Remarks and further discussions} \label{example}

In this section, we address some important observations and remarks regarding our main theorem. First, we make a comment on the stability assumption:
\begin{remark}\label{stab_ness}
The stability condition in Theorem \ref{thmvolQ} is necessary. This is the same phenomenon observed in \cite{Yuan20} for the volume comparison of scalar curvature. \\

Let $\bar g$ be the canonical product metric on $\mathbb{S}^2 \times \mathbb{S}^2 \times \mathbb{S}^2$. It is well-known that this manifold is unstable (c.f \cite{Kro14}). Consider the metric $$g_t = (1 + t^2)^{-1} g^1_{_{\mathbb{S}^2}} + (1-t)^{-1} g^2_{_{\mathbb{S}^2}} + (1+t)^{-1} g^3_{_{\mathbb{S}^2}}$$ with $t \in (0,1)$ sufficiently small.
		Then its $Q$-curvature is given by 
		 \begin{align*}
		 Q_{g_t} =& \frac{1}{50} \left( - 3 t^4  + 7 t^2  + 48 \right) > \frac{24}{25} = Q_{\bar g} 
		 \end{align*}
		 However, its volume satisfies 
		 $$ \Vol_M (g_t) = (1 - t^4)^{-1}  \Vol_M(\bar g)> \Vol_M(\bar g).$$
It shows that the volume comparison fails in this case. In fact, the volume comparison is not expected to hold for unstable Einstein manifolds due to Corollary \ref{cor:secvarEin}.
\end{remark}

\vskip.1in

Now we turn to the locality assumption. For general dimensions $n\geq 3$, we have proved a volume comparison result for metrics sufficiently closed to a strictly stable positive Einstein metric. It turns out that for dimension $n=4$, we do have a global volume comparison result as follows:
\begin{proposition}\label{prop:lcf_vol_comparison}
Let $(M^4, \bar{g})$ be a closed $4$-dimensional locally conformally flat Riemannian manifold with positive constant $Q$-curvature. Then for any metric $g$ on $M$ satisfies
$$Q_{g}\geq Q_{\bar{g}}$$
pointwisely on $M$, we have
$$ \Vol_M(g)\leq  \Vol_M(\bar{g})-\frac{1}{4Q_{\bar{g}}}||W_{g}||^2_{L^2(M,g)}\leq \Vol_M(\bar{g}),$$
where equality holds if and only if $g$ is also locally conformally flat.
\end{proposition}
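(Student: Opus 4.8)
The plan is to exploit the Gauss--Bonnet--Chern formula \eqref{Gauss_Bonnet_Chern} in dimension four, which relates the integral of $Q$-curvature plus a quarter of the squared Weyl norm to the (topological) Euler characteristic. First I would apply \eqref{Gauss_Bonnet_Chern} to the reference metric $\bar g$; since $\bar g$ is locally conformally flat, $W_{\bar g} = 0$ and hence $\int_M Q_{\bar g}\, dv_{\bar g} = 8\pi^2 \chi(M)$. Because $Q_{\bar g}$ is a positive constant, this reads $Q_{\bar g}\,\Vol_M(\bar g) = 8\pi^2 \chi(M)$, which in particular forces $\chi(M) > 0$. Next I would apply \eqref{Gauss_Bonnet_Chern} to the competitor metric $g$, obtaining
\begin{align*}
\int_M Q_g \, dv_g + \frac14 \int_M |W_g|_g^2 \, dv_g = 8\pi^2 \chi(M) = Q_{\bar g} \, \Vol_M(\bar g).
\end{align*}

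Now I would use the pointwise hypothesis $Q_g \geq Q_{\bar g} > 0$ to estimate the left-hand side from below:
\begin{align*}
\int_M Q_g \, dv_g \geq Q_{\bar g} \int_M dv_g = Q_{\bar g} \, \Vol_M(g).
\end{align*}
Substituting this into the previous identity gives
\begin{align*}
Q_{\bar g} \, \Vol_M(g) + \frac14 \|W_g\|_{L^2(M,g)}^2 \leq Q_{\bar g} \, \Vol_M(\bar g),
\end{align*}
and dividing by the positive constant $Q_{\bar g}$ yields exactly
\begin{align*}
\Vol_M(g) \leq \Vol_M(\bar g) - \frac{1}{4 Q_{\bar g}} \|W_g\|_{L^2(M,g)}^2 \leq \Vol_M(\bar g).
\end{align*}

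For the equality characterization: the second inequality is an equality precisely when $\|W_g\|_{L^2(M,g)} = 0$, i.e.\ $W_g \equiv 0$, which means $g$ is locally conformally flat. Conversely, if $g$ is locally conformally flat then $W_g = 0$ and the chain of inequalities collapses (one should note that equality in the first estimate, $\int_M Q_g\,dv_g = Q_{\bar g}\Vol_M(g)$, is then automatic from the Gauss--Bonnet--Chern identity, so no extra condition on $Q_g$ is needed). There is essentially no analytic obstacle here — the argument is purely a matter of combining the topological identity with the sign hypotheses; the only point requiring a word of care is verifying that $Q_{\bar g} > 0$ together with $\chi(M)>0$ makes all divisions legitimate, and that the equality discussion correctly identifies which inequality is tight. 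This global result stands in contrast to Theorem \ref{thmvolQ}, where no conformal flatness is assumed and only a local (near a strictly stable Einstein metric) statement is available.
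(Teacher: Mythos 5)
Your argument is correct and essentially identical to the paper's proof: both apply the Gauss--Bonnet--Chern formula to $\bar g$ (using $W_{\bar g}=0$ and the constancy of $Q_{\bar g}$ to get $Q_{\bar g}\Vol_M(\bar g)=8\pi^2\chi(M)$) and to $g$, then use the pointwise bound $Q_g\ge Q_{\bar g}>0$ and divide by $Q_{\bar g}$. The one slip is the parenthetical in your equality discussion: for a locally conformally flat $g$, Gauss--Bonnet--Chern yields $\int_M Q_g\,dv_g = Q_{\bar g}\Vol_M(\bar g)$, not $Q_{\bar g}\Vol_M(g)$, so equality of the \emph{full} chain (i.e.\ $\Vol_M(g)=\Vol_M(\bar g)$) is not automatic from $W_g\equiv 0$; read as characterizing equality in the second inequality only — which is evidently what the proposition intends, and which the paper's own proof does not spell out either — your conclusion stands.
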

\begin{proof}
	By Gauss-Bonnet-Chern formula,
	$$8\pi^2 \chi (M) = \int_M Q_g dv_g + \frac{1}{4}\int_M |W_g|^2 dv_g = \int_M Q_{\bar g} dv_{\bar g} = Q_{\bar g}  \Vol_M(\bar g) . $$
	Then
	\begin{align*}
		 \Vol_M(\bar g) =& Q_{\bar g}^{-1} \left(\int_M Q_g dv_g + \frac{1}{4}\int_M |W_g|^2 dv_g \right)\\
		\geq&  \Vol_M(g) + \frac{1}{4Q_{\bar g}} \int_M |W|_g^2 dv_g.
	\end{align*}
\end{proof}

\vskip .2in

As a straightforward application, we have
\begin{corollary}\label{cor:global_vol_comp_4-sphere_hyperbolic}
		Let $(M^4, \bar g)$ be either the standard round $4$-sphere or a closed hyperbolic $4$-manifold. Then for any metrics $g$ satisfies $$Q_g \geq Q_{\bar g},$$
		we have 
		$$ \Vol_M(g) \leq   \Vol_M(\bar g).$$
		Moreover, in case of the round $4$-sphere, the equality holds if and and only if $g$ is isometric to the spherical metric $\bar g$.
	\end{corollary}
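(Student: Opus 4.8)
The plan is to deduce Corollary~\ref{cor:global_vol_comp_4-sphere_hyperbolic} directly from Proposition~\ref{prop:lcf_vol_comparison}, so the only real work is to verify that its hypotheses hold for the two stated reference manifolds. First I would observe that in dimension $n=4$ the $Q$-curvature of an Einstein metric with $Ric_{\bar g} = 3\lambda \bar g$ is, by formula~\eqref{Jtensor} specialized to $n=4$ (equivalently~\eqref{Q_4} with $Ric_{\bar g} = 3\lambda\bar g$, $R_{\bar g} = 12\lambda$, $\Delta_{\bar g} R_{\bar g}=0$), a constant; an explicit computation gives $Q_{\bar g} = -\tfrac12 |Ric_{\bar g}|^2 + \tfrac16 R_{\bar g}^2 = -\tfrac12\cdot 27\lambda^2 + \tfrac16\cdot 144\lambda^2 = \tfrac{21}{2}\lambda^2 > 0$ whenever $\lambda \neq 0$. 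Both the round $4$-sphere (with $\lambda = 1$ after normalizing, so $Q_{\bar g}=\tfrac{21}{2}$, matching $\tfrac18 n(n-2)(n+2) = 12$ up to the choice of normalization — I would use whichever normalization the paper fixes) and a closed hyperbolic $4$-manifold ($\lambda = -1$, giving again $Q_{\bar g} = \tfrac{21}{2} > 0$) therefore have \emph{positive constant} $Q$-curvature.

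The second hypothesis to check is that $\bar g$ is locally conformally flat, i.e. $W_{\bar g}=0$. For the round sphere this is classical (it is conformally flat, being conformal to $\mathbb{R}^4$ via stereographic projection), and for a closed hyperbolic manifold it holds because the hyperbolic metric has constant sectional curvature, and any constant-curvature metric has vanishing Weyl tensor. Once both hypotheses are confirmed, Proposition~\ref{prop:lcf_vol_comparison} applies verbatim: for any metric $g$ with $Q_g \geq Q_{\bar g}$ pointwise we get
\begin{align*}
\Vol_M(g) \leq \Vol_M(\bar g) - \frac{1}{4Q_{\bar g}}\|W_g\|^2_{L^2(M,g)} \leq \Vol_M(\bar g),
\end{align*}
with equality in the last inequality forcing $W_g = 0$, i.e. $g$ locally conformally flat.

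The final point is the rigidity statement in the spherical case: equality $\Vol_M(g) = \Vol_M(\mathbb{S}^4,\bar g)$ forces $g$ to be isometric to $\bar g$. From the chain of inequalities, equality forces both $W_g \equiv 0$ (so $g$ is locally conformally flat) and $Q_g \equiv Q_{\bar g}$ almost everywhere, hence everywhere by continuity. Then by the Gauss--Bonnet--Chern formula~\eqref{total_Q} in the locally conformally flat case, $\int_M Q_g\,dv_g = 8\pi^2\chi(\mathbb{S}^4) = 16\pi^2$, and since $Q_g$ is the positive constant $Q_{\bar g}$, this pins down $\Vol_M(g)$; more to the point, a locally conformally flat metric on $S^4$ with constant positive $Q$-curvature conformal to the round metric is, by the uniqueness theory for the constant-$Q$-curvature equation on the sphere (the analogue of the Obata/uniqueness result — I would cite the relevant rigidity result, e.g. from the conformal $Q$-curvature literature surveyed in~\cite{HY16}), the round metric up to a conformal diffeomorphism, hence isometric to $\bar g$. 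The main obstacle here is precisely this last rigidity step: the inequalities alone only give local conformal flatness and the constancy of $Q_g$, and upgrading this to an isometry requires invoking a uniqueness theorem for constant-$Q$-curvature metrics in the conformal class of the round $S^4$; everything else is a direct substitution into Proposition~\ref{prop:lcf_vol_comparison}.
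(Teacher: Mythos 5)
Your proposal follows essentially the same route as the paper: the volume inequality is a direct substitution into Proposition~\ref{prop:lcf_vol_comparison}, and the rigidity on $\mathbb{S}^4$ is obtained from the equality case ($W_g\equiv 0$ and $Q_g\equiv Q_{\bar g}$) together with a uniqueness theorem for constant $Q$-curvature metrics in the conformal class of the round metric --- the paper cites \cite{CY97, Lin98, Xu06} for exactly this last step. Two small points. First, your value $Q_{\bar g}=\tfrac{21}{2}\lambda^2$ is an arithmetic slip: in dimension $4$ with $Ric_{\bar g}=3\lambda\bar g$ one has $|Ric_{\bar g}|^2=(3\lambda)^2\cdot 4=36\lambda^2$ (not $27\lambda^2$), so \eqref{Q_4} gives $Q_{\bar g}=6\lambda^2$, and likewise $\tfrac18 n(n-2)(n+2)=6$ for $n=4$, not $12$; this is harmless since only the positivity of the constant is used. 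Second, in the rigidity step you pass from ``$g$ is locally conformally flat'' to ``$g$ is conformal to the round metric'' without justification; this is a genuinely global statement, and it is precisely where the paper invokes Kuiper's theorem \cite{Kuiper49}, using that $\mathbb{S}^4$ is simply connected, before the conformal uniqueness result can be brought to bear. With that citation supplied, your argument matches the paper's.
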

	
	\begin{proof}
		We only need to prove the rigidity part of $4$-sphere. According to Proposition \ref{prop:lcf_vol_comparison}, the metric $g$ has to be locally conformally flat and hence $g \in [\bar g]$ by \emph{Kuiper's theorem}\cite{Kuiper49}, since $\mathbb{S}^4$ is simply connected. On the other hand, due to our assumption $Q_g \geq Q_{\bar g}$ and Gauss-Bonnet-Chern formula, we have $Q_g = Q_{\bar g}$. Now the conclusion follows from a uniqueness result in \cite{CY97, Lin98, Xu06}.
	\end{proof}

\vskip.1in

Based on this global volume comparison observed above for $4$-dimensional hyperbolic manifolds, we would like to propose the following conjecture:
\begin{conjecture}
For any $n\geq 3$, let $(M^n, \bar g)$ be a closed hyperbolic manifold. Suppose $g$ is a metric on $M$ with $$Q_g \geq Q_{\bar g},$$ then we have
		$$V_M(g) \leq V_M (\bar g).$$
\end{conjecture}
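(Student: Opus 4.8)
A useful first remark is that the conjecture would follow from the a~priori inequality
\[
\int_M Q_g\,dv_g\ \leq\ \int_M Q_{\bar g}\,dv_{\bar g}\qquad\text{for every metric $g$ with $Q_g\ge Q_{\bar g}$.}
\]
Indeed, by \eqref{Jtensor} a hyperbolic metric is Einstein with $Q_{\bar g}=\tfrac{(n-2)(n+2)}{8}>0$ a positive constant, so $Q_g\ge Q_{\bar g}$ forces $\int_M Q_g\,dv_g\ge Q_{\bar g}\Vol_M(g)$, while $\int_M Q_{\bar g}\,dv_{\bar g}=Q_{\bar g}\Vol_M(\bar g)$. In dimension $n=4$ this inequality holds for \emph{every} metric, hence in particular under the hypothesis: a closed hyperbolic $4$-manifold is locally conformally flat, so by \eqref{Gauss_Bonnet_Chern} together with \eqref{total_Q},
\[
\int_M Q_g\,dv_g=8\pi^2\chi(M)-\tfrac14\int_M|W_g|^2_g\,dv_g\ \leq\ 8\pi^2\chi(M)=\int_M Q_{\bar g}\,dv_{\bar g},
\]
which is precisely Corollary~\ref{cor:global_vol_comp_4-sphere_hyperbolic}. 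So the real content of the conjecture lies in the dimensions $n\ne 4$, where $\int_M Q_g\,dv_g$ is not even a conformal invariant and no such topological identity is available.

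For metrics $C^4$-close to $\bar g$ one can attempt to imitate Section~\ref{volcom}: Theorem~\ref{thm:slice_thm} and the gauge-fixing construction remain valid for negative Einstein constant $\lambda=-1$, so it suffices to control the sign of the second variation in Corollary~\ref{cor:secvarEin}. Here the obstruction surfaces. The transverse-traceless part is governed, via Proposition~\ref{prop:second_var_mathring_J} with $R_{\bar g}=-n(n-1)$, by the operator $\bigl(-\Delta_E^{\bar g}-\tfrac{(n-2)^3(n+2)}{8(n-1)}\bigr)(-\Delta_E^{\bar g})$ on $S_{2,\bar g}^{_{TT}}(M)$, which is positive only when the bottom of the spectrum of $-\Delta_E^{\bar g}$ there exceeds $\tfrac{(n-2)^3(n+2)}{8(n-1)}$; the conformal part involves $\mathscr L_{\bar g}=\tfrac12\bigl(-\Delta_{\bar g}-\tfrac{(n-2)(n+2)}{2}\bigr)\bigl(-\Delta_{\bar g}+n\bigr)$, whose first factor is indefinite as soon as $\bar g$ carries small Laplace eigenvalues, so Proposition~\ref{prop:positive_L} does not extend to $\lambda<0$. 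Consequently even a local statement appears to need either explicit spectral hypotheses on $\bar g$, or --- closer to the mechanism behind Proposition~\ref{prop:lcf_vol_comparison} --- a genuine use of the constraint $Q_g\ge Q_{\bar g}$ to exclude the offending directions rather than an unconstrained sign of $D^2\mathcal F_{M,\bar g}$.

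The main obstacle, and the reason the conjecture seems genuinely hard for $n\ge 5$ (and for $n=3$), is the global step: obtaining the a~priori bound $\int_M Q_g\,dv_g\le\int_M Q_{\bar g}\,dv_{\bar g}$ in the absence of a Gauss--Bonnet--Chern identity. Plausible lines of attack are: a concentration--compactness or min--max analysis of the scaling-invariant, diffeomorphism-invariant functional $g\mapsto\Vol_M(g)^{4/n}\,\overline{Q_g}$ on $\mathcal M/\mathscr D(M)$, using the second-variation analysis near $\bar g$ to rule out nearby competitors and curvature/volume a~priori estimates to control metrics far from $\bar g$; a reduction of the pointwise inequality $Q_g\ge Q_{\bar g}$ to $\sigma_2$-type information combined with the fully nonlinear Yamabe theory in the (locally conformally flat) conformal class, exploiting that a hyperbolic manifold has negative Yamabe invariant; or a fourth-order Bishop--Gromov-type comparison adapted to the Paneitz operator. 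I would expect the local step to be comparatively routine once its spectral caveats are settled; the genuine difficulty is the passage from a neighbourhood of $\bar g$ to all metrics with $Q_g\ge Q_{\bar g}$ in dimension five and higher, where no analogue of the dimension-four topological identity is presently known.
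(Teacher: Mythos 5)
The statement you were asked about is presented in the paper as a \emph{conjecture}: the authors give no proof, and their only comment is the remark immediately following it, namely that even the local ($C^4$-close) version ``depends on a further research on the spectrum of the operator $\mathscr{L}_{\bar g}$.'' Your proposal, quite properly, does not claim to prove the statement either, and your diagnosis agrees with the authors' own: you reduce the conjecture to the a~priori bound $\int_M Q_g\,dv_g\le\int_M Q_{\bar g}\,dv_{\bar g}$ (correct, since $Q_{\bar g}$ is a positive constant for a hyperbolic metric); you observe that in dimension four this bound is the Gauss--Bonnet--Chern identity and recovers exactly Proposition~\ref{prop:lcf_vol_comparison} and Corollary~\ref{cor:global_vol_comp_4-sphere_hyperbolic}; and you identify the precise point where the machinery of Section~\ref{volcom} breaks down for $\lambda<0$ --- the factorization $\mathscr L_{\bar g}=\tfrac12\bigl(-\Delta_{\bar g}-\tfrac{(n-2)(n+2)}{2}\bigr)\bigl(-\Delta_{\bar g}+n\bigr)$ has an indefinite first factor when the hyperbolic manifold carries small Laplace eigenvalues, so Proposition~\ref{prop:positive_L} does not extend, and the TT-block of Corollary~\ref{cor:secvarEin} read off from Proposition~\ref{prop:second_var_mathring_J} with $R_{\bar g}=-n(n-1)$ likewise needs a spectral gap for $-\Delta_E^{\bar g}$. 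This is exactly the obstruction the authors point to.

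Two caveats. First, a small numerical slip: from \eqref{Jtensor} with $R_{\bar g}=-n(n-1)$ one gets $Q_{\bar g}=\tfrac{1}{8}n(n-2)(n+2)$, not $\tfrac{1}{8}(n-2)(n+2)$; this does not affect your argument, since only positivity of the constant is used. Second, and more importantly, none of the ``plausible lines of attack'' you list in the final paragraph is carried out, so the proposal establishes nothing beyond the $n=4$ case already contained in the paper; since the statement is an open conjecture, that is the expected outcome, but it should be recorded that there is no proof here --- only a correct identification of where the difficulty lies, consistent with the paper's own assessment.
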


\begin{remark}
	This is a corresponding version of \emph{Schoen's conjecture} on scalar curvature (see \cite{Yuan20} for more details). As a first step, we would be interested in the question that whether this conjecture holds for metrics $C^4$-closed to the hyperbolic metric $\bar g$. In this case, it depends on a further research on the spectrum of the operator $\mathscr{L}_{\bar{g}}$. 
\end{remark}

\vskip .2in



\begin{bibdiv}
\begin{biblist}
\parskip0pt
\itemsep0pt
\footnotesize
\bib{Bes87}{book}{
   author={Besse, A. L.},
   title={Einstein manifolds},
   series={Classics in Mathematics},
   note={Reprint of the 1987 edition},
   publisher={Springer-Verlag, Berlin},
   date={2008},
   pages={xii+516},
   isbn={978-3-540-74120-6},
}
\bib{Bra85}{article}{
   author={Branson, T. P.},
   title={Differential operators canonically associated to a conformal
   structure},
   journal={Math. Scand.},
   volume={57},
   date={1985},
   number={2},
   pages={293--345},
   issn={0025-5521},
}
		
\bib{BM11}{article}{
	author={Brendle, S.}
	author={Marques, F. C.},
	title={Scalar curvature rigidity of geodesic balls in $\mathbb{S}^n$},
	journal={J. Diff. Geom.},
	volume={88},
	date={2011},
	number={},
	pages={379--394},
	issn={},
}

	\bib{CLY19}{article}{
   author={Case, J. S.},
   author={Lin, Y.-J.},
   author={Yuan, W.},
   title={Conformally variational Riemannian invariants},
   journal={Trans. Amer. Math. Soc.},
   volume={371},
   date={2019},
   number={11},
   pages={8217--8254},
   issn={0002-9947},
}

	\bib{CY97}{article}{
		author={Chang, S.-Y. A.},
		author={Yang, P. C.},
		title={On uniqueness of solutions of $n$-th order differential equations in conformal geometry},
		journal={Math. Res. Lett.},
		volume={4},
		date={1997},
		pages={91--102},
	}

	\bib{CEM13}{article}{
		author={Corvino, J.},
		author={Eichmair, Michael},
		author={Miao, Pengzi},
		title={Deformation of scalar curvature and volume},
		journal={Math. Ann.},
		volume={357},
		date={2013},
		number={2},
		pages={551--584},
		issn={0025-5831},
}

\bib{FM75}{article}{
   author={Fischer, A. E.},
   author={Marsden, J. E.},
   title={Deformations of the scalar curvature},
   journal={Duke Math. J.},
   volume={42},
   date={1975},
   number={3},
   pages={519--547},
   issn={0012-7094},
}
\bib{GH05}{article}{
   author={Graham, C. Robin},
   author={Hirachi, Kengo},
   title={The ambient obstruction tensor and $Q$-curvature},
   conference={
      title={AdS/CFT correspondence: Einstein metrics and their conformal
      boundaries},
   },
   book={
      series={IRMA Lect. Math. Theor. Phys.},
      volume={8},
      publisher={Eur. Math. Soc., Z\"{u}rich},
   },
   date={2005},
   pages={59--71},
}

\bib{HY16}{article}{
   author={Hang, F.-B.},
   author={Yang, P. C.},
   title={Lectures on the fourth-order $Q$ curvature equation},
   conference={
      title={Geometric analysis around scalar curvatures},
   },
   book={
      series={Lect. Notes Ser. Inst. Math. Sci. Natl. Univ. Singap.},
      volume={31},
      publisher={World Sci. Publ., Hackensack, NJ},
   },
   date={2016},
   pages={1--33},
}	

  \bib{Kro14}{book}{
  author      = {K. Kr{\"o}ncke},
  title       = {Stability of Einstein Manifolds},
  note        = {Thesis (Ph.D.)--Universit{\"a}t Potsdam, URL \url{http://opus.kobv.de/ubp/volltexte/2014/6963/}},
  date        = {2014},
 
}

\bib{Kuiper49}{article}{
	author={Kuiper, N. H.},
	title={On conformally flat spaces in the large},
	journal={Ann. Math.},
	volume={50},
	date={1949},
	pages={916--924},
}

	\bib{Lin98}{article}{
		author={Lin, C.-S.},
		title={A classification of solutions of a conformally invariant fourth order equation in $R^n$},
		journal={Comment. Math. Helv.},
		volume={73},
		date={1998},
		number={4},
		pages={206--231},
	}

	\bib{LY16}{article}{
   author={Lin, Y.-J.},
   author={Yuan, W.},
   title={Deformations of Q-curvature I},
   journal={Calc. Var. Partial Differential Equations},
   volume={55},
   date={2016},
   number={4},
   pages={Art. 101, 29},
   issn={0944-2669},
}
\bib{LY17}{article}{
   author={Lin, Y.-J.},
   author={Yuan, W.},
   title={A symmetric 2-tensor canonically associated to $Q$-curvature and
   its applications},
   journal={Pacific J. Math.},
   volume={291},
   date={2017},
   number={2},
   pages={425--438},
   issn={0030-8730},
} 
\bib{Mat13}{article}{
   author={Matsumoto, Y.},
   title={A GJMS construction for 2-tensors and the second variation of the
   total $Q$-curvature},
   journal={Pacific J. Math.},
   volume={262},
   date={2013},
   number={2},
   pages={437--455},
   issn={0030-8730},
}
\bib{Obt72}{article}{
   author={Obata, M.},
   title={The conjectures on conformal transformations of Riemannian
   manifolds},
   journal={J. Diff. Geom.},
   volume={6},
   date={1971/72},
   pages={247--258},
}
	\bib{Pan08}{article}{
   author={Paneitz, S. M.},
   title={A quartic conformally covariant differential operator for
   arbitrary pseudo-Riemannian manifolds (summary)},
   journal={SIGMA Symmetry Integrability Geom. Methods Appl.},
   volume={4},
   date={2008},
   pages={Paper 036, 3},
   issn={1815-0659},
}	

	\bib{Via13}{article}{
		author={Viaclovsky, J. A.},
		title={Critical metrics for Riemannian curvature functionals},
		journal={IAS/Park City Mathematics Series},
		volume={022},
		date={2016},
		pages={195--274},
		issn={978-1-4704-2313-1},
	}
	
	\bib{Xu06}{article}{
		author={Xu, X.},
		title={Classification of solutions of certain fourth order nonlinear elliptic equations in $\mathbb{R}^4$},
		journal={Pacific J. of Math.},
		volume={225},
		date={2006},
		number={2}
		pages={361--378},
	}

\bib{Yuan20}{article}{
	author={Yuan, W.},
	title={Volume comparison with respect to scalar curvature},
	journal={arXiv:1609.08849, submitted},
	volume={},
	date={2021},
	number={},
	pages={},
	issn={},
} 
	
\end{biblist}
\end{bibdiv}

\end{document}